\theoremstyle{plain}
\newtheorem{thm}{Theorem}[section]
\newtheorem{theorem}[thm]{Theorem}
\newtheorem*{theorem*}{Theorem}
\newtheorem{cor}[thm]{Corollary}
\newtheorem{lemma}[thm]{Lemma}
\newtheorem{prop}[thm]{Proposition}
\newtheorem*{fact*}{Fact}
\newtheorem*{question*}{Question}
\newtheorem{main}{Theorem}
\theoremstyle{remark}
\newtheorem*{rem}{Remark}
\newtheorem*{rems}{Remarks}
\newtheorem*{note}{Note}
\theoremstyle{definition}
\newtheorem{definition}[thm]{Definition}
\newtheorem*{convention}{Convention}
\newtheorem{example}[thm]{Example}
\newtheorem*{standing hypothesis}{Standing Hypothesis}
\newcommand{\thmref}[1]{Theorem~\ref{#1}}
\newcommand{\corref}[1]{Corollary~\ref{#1}}
\newcommand{\lemref}[1]{Lemma~\ref{#1}}
\newcommand{\propref}[1]{Proposition~\ref{#1}}
\newcommand{\defref}[1]{Definition~\ref{#1}}
\newcommand{\itemrefstar}[1]{(\ref*{#1})}
\newcommand{\figref}[1]{Figure~\ref{#1}}
\newcommand{\secref}[1]{Section~\ref{#1}}
\newcommand{\defn}[1]{\emph{#1}}
\newcommand{\Z}{\mathbb{Z}}
\newcommand{\R}{\mathbb{R}}
\renewcommand{\H}{\mathbb{H}}
\newcommand{\mS}{\mathbb{S}}
\newcommand{\into}{\hookrightarrow}
\renewcommand{\setminus}{\smallsetminus}
\DeclareMathOperator{\supp}{supp}
\DeclareMathOperator{\diam}{diam}
\DeclareMathOperator{\Isom}{Isom}
\newcommand{\set}[1]{\left\{#1\right\}}
\newcommand{\setp}[2]{\left\{#1 \mid #2\right\}}
\newcommand{\family}[1]{\left(#1\right)}
\newcommand{\abs}[1]{\left| #1 \right|}
\newcounter{listcounter}
\newcommand{\res}[1]{\vert_{#1}}
\newcommand{\mae}[1]{\mbox{#1-a.e.}}
\DeclareMathOperator{\Lk}{Lk}
\newcommand{\link}[2]{\Lk (#1)}
\DeclareMathOperator{\Spaceofgeodesics}{\Sigma}
\newcommand{\Sp}[2]{\Spaceofgeodesics_{#1}}
\newcommand{\cl}[1]{\overline{#1}}
\newcommand{\bd}{\partial_{\infty}}
\newcommand{\double}[1]{#1 \times #1}
\newcommand{\dbX}{\double{\bd X}}
\newcommand{\Lam}{\Lambda}
\newcommand{\dbL}{\double{\Lam}}
\newcommand{\Leb}{\lambda}
\newcommand{\GE}{\mathcal{G}^E}
\newcommand{\GER}{\GE \times \R}
\newcommand{\Reg}{\mathcal R}
\newcommand{\RE}{\Reg^E}
\newcommand{\RER}{\RE \times \R}
\newcommand{\Zerowidth}{\mathcal{Z}}
\newcommand{\ZE}{\Zerowidth^E}
\newcommand{\Specialset}{\mathcal S}
\newcommand{\SXL}{S_{\Lam} X}%
\newcommand{\GEL}{\GE_{\Lam}}
\newcommand{\GELR}{\GEL \times \R}
\newcommand{\RL}{\Reg_{\Lam}}
\newcommand{\REL}{\RE_{\Lam}}
\newcommand{\RELR}{\REL \times \R}
\newcommand{\ZL}{\Zerowidth_{\Lam}}
\newcommand{\ZEL}{\ZE_{\Lam}}
\newcommand{\lmod}{\backslash}
\newcommand{\modgp}[2]{#2 \lmod #1}%
\newcommand{\modG}[1]{\modgp{#1}{\Gamma}}
\DeclareMathOperator{\QE}{\mathcal Q_{\RE}}
\DeclareMathOperator{\QEL}{\mathcal Q_{\REL}}
\DeclareMathOperator{\Busemanninclusion}{\iota}
\DeclareMathOperator{\pr}{pr}%
\DeclareMathOperator{\cratio}{B}
\DeclareMathOperator{\emap}{E}
\DeclareMathOperator{\flip}{flip}
\DeclareMathOperator{\shadow}{\mathcal O}
\title[Flat strips, Bowen-Margulis measures, and mixing for CAT(0) spaces]{Flat strips, Bowen-Margulis measures, and mixing of the geodesic flow for rank one CAT(0) spaces}
\author{Russell Ricks}
\address{University of Michigan, Ann Arbor, Michigan, USA}
\email{rmricks@umich.edu}
\thanks{This material is based upon work supported by the National Science Foundation under Grant Number NSF 1045119.}
\thanks{The author would like to thank Brandon Seward, Andrew Zimmer, and Peter Scott for discussing this paper, Scott Schneider for helping track down a reference, and especially Ralf Spatzier for his support and many helpful discussions.}
\begin{document}

\begin{abstract}
Let $X$ be a proper, geodesically complete CAT($0$) space under a proper, non-elementary, isometric action by a group $\Gamma$ with a rank one element.  We construct a generalized Bowen-Margulis measure on the space of unit-speed parametrized geodesics of $X$ modulo the $\Gamma$-action.

Although the construction of Bowen-Margulis measures for rank one nonpositively curved manifolds and for CAT($-1$) spaces is well-known, the construction for CAT($0$) spaces hinges on establishing a new structural result of independent interest:  Almost no geodesic, under the Bowen-Margulis measure, bounds a flat strip of any positive width.  We also show that almost every point in $\bd X$, under the Patterson-Sullivan measure, is isolated in the Tits metric.  (For these results we assume the Bowen-Margulis measure is finite, as it is in the cocompact case).

Finally, we precisely characterize mixing when $X$ has full limit set:  A finite Bowen-Margulis measure is not mixing under the geodesic flow precisely when $X$ is a tree with all edge lengths in $c \Z$ for some $c > 0$.  This characterization is new, even in the setting of CAT($-1$) spaces.

More general (technical) versions of these results are also stated in the paper.
\end{abstract}

\maketitle

\section{Introduction}

In this paper, we study the ergodic theory of the geodesic flow on proper CAT($0$) spaces under proper, isometric group actions (see e.g.~ \cite{ballmann} or \cite{bridson} for the geometric background and motivation on CAT($0$) spaces).  The main goals of this paper are to construct a generalized Bowen-Margulis measure and to precisely characterize mixing of the geodesic flow for this measure in terms of the geometry of the space (\thmref{main trees}).  The construction of this measure is not an immediate generalization from the manifold setting, but involves establishing two structural results of independent interest:  First, almost every point in $\bd X$, under the Patterson-Sullivan measure, is isolated in the Tits metric (\thmref{main isolated}).  Second, almost no geodesic, under the Bowen-Margulis measure, bounds a flat strip of any positive width (\thmref{main lonely}).  All these results hold for a large class of CAT($0$) spaces, including the cocompact, geodesically complete case; the main assumption is the existence of a rank one axis.

The existence of a rank one axis in a CAT($0$) space---that is, an axis of translation that does not bound a flat half-plane---forces rather strong properties for the topological dynamics (see e.g.~ \lemref{ping-pong} and \propref{hamenstadt}).  These properties play an essential role in our construction and study of Bowen-Margulis measures.  Yet the existence of a rank one axis may well be generic for CAT($0$) spaces.  Indeed, Ballmann and Buyalo (\cite{bb}) conjecture that every geodesically complete CAT($0$) space under a proper, cocompact, isometric group action that does not admit a rank one axis must either split nontrivially as a product, or be a higher rank symmetric space or Euclidean building.  Moreover, this conjecture has been proven (and is called the Rank Rigidity Theorem) in a few important cases, notably for Hadamard manifolds by Ballmann, Brin, Burns, Eberlein, and Spatzier (see \cite{ballmann} and \cite{burns-spatzier}) and for CAT($0$) cube complexes by Caprace and Sageev (\cite{caprace-sageev}).

The Bowen-Margulis measure was first introduced for compact Riemannian manifolds of negative sectional curvature, where Margulis (\cite{margulis}) and Bowen (\cite{bowen71}) used different methods to construct the measure of maximal entropy for the geodesic flow, shown by Bowen (\cite{bowen73}) to be unique.  Sullivan (\cite{sullivan79} and \cite{sullivan84}) gave a third method to obtain this measure---originally in constant negative curvature, but extended by Kaimanovich (\cite{kaim90})
to variable negative curvature.
Our construction follows Sullivan's approach generally, and is especially inspired by work in two prior classes of spaces:  Knieper's (\cite{knieper97}), where $\modG{X}$ is a compact Riemannian manifold of nonpositive sectional curvature, and Roblin's (\cite{roblin}), where $X$ is a proper CAT($-1$) space.

The main obstacle in our construction is the presence of nontrivial flat strips.  In rank one nonpositively curved Riemannian manifolds, these form a closed, nowhere-dense subset of the unit tangent bundle, and each carries its own volume form.  However, in rank one CAT($0$) spaces, a priori it might happen that every geodesic bounds a nontrivial flat strip; moreover, the strips themselves do not carry a natural Borel measure.  Our solution is to construct the Bowen-Margulis measure in two stages, and to prove the necessary structural results between stages.

More precisely, let $X$ be a proper \textnormal{CAT($0$)} space under a proper, non-elementary, isometric action by a group $\Gamma$ with a rank one element.  Let $\mu_x$ be a Patterson-Sullivan measure on the boundary $\bd X$ of $X$, and let $\GE \subset \dbX$ be the set of endpoint pairs of geodesics in $X$.  There is a $\Gamma$-invariant Borel measure $\mu$ on $\GE$ (called a geodesic current) associated to $\mu_x$.  We first construct a Borel measure $m$ of full support on $\GER$ (by taking the product with Lebesgue measure on $\R$); $m$ descends to a Borel measure $m_\Gamma$ on the quotient $\modG{(\GER)}$.

We usually work with $m_\Gamma$ finite; this gives recurrence on a set of full measure by Poincar\'e.  Note that $m_\Gamma$ is always finite in the cocompact case, however.  We prove the following structural result.

\newcommand{\stateisolated}
{%
  Let $X$ be a proper \textnormal{CAT($0$)} space under a proper, non-elementary, isometric action by a group $\Gamma$ with a rank one element.  If $m_{\Gamma}$ is finite, then \mae{$\mu_x$} $\xi \in \bd X$ is isolated in the Tits metric.
}%

\begin{main}[\thmref{isolated almost everywhere}]
\label{main isolated}
\stateisolated
\end{main}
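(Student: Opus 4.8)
The plan is to push the statement off the boundary and onto the space of geodesics, where the finite flow-invariant measure $m_\Gamma$ of \propref{Bowen-Margulis quotient measure} supplies recurrence, and then to contradict non-isolation by amplifying a flat sector into a flat half-plane using the rank one dynamics of \lemref{ping-pong}. Write $N \subseteq \bd X$ for the set of points that are \emph{not} isolated in the Tits metric; the goal is $\mu_x(N) = 0$. I would first reduce to a statement about geodesics. The $\Gamma$-invariant measure on $\GE$ is, up to a bounded conformal factor, the restriction of $\mu_x \times \mu_x$, and $(\mu_x \times \mu_x)(\GE) > 0$ together with the full support of $\mu_x$ and the ping-pong dynamics of \lemref{ping-pong}---which join $\mu_x$-generic pairs by a geodesic---force, for $\mu_x$-almost every $\xi$, the set of $\eta$ with $(\xi, \eta) \in \GE$ to have positive $\mu_x$-measure. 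Hence if $\mu_x(N) > 0$ the set of geodesics whose forward endpoint lies in $N$ has positive $m$-measure, and it suffices to prove instead that $m$-almost every geodesic has Tits-isolated forward endpoint.

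Next I would invoke recurrence. Since $m_\Gamma$ is finite and invariant under the geodesic flow $\phi_t$ (\propref{Bowen-Margulis quotient measure}), Poincar\'e recurrence on $\modG{(\GER)}$ shows that $m$-almost every geodesic $c$ is forward recurrent modulo $\Gamma$: there are $t_n \to +\infty$ and $\gamma_n \in \Gamma$ with $\gamma_n \phi_{t_n} c \to c$. It therefore suffices to show that a forward-recurrent geodesic has Tits-isolated forward endpoint.

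For the heart of the argument, suppose a forward-recurrent $c$ had a non-isolated forward endpoint $\xi^+ = c(+\infty)$. Then there is some $\eta \neq \xi^+$ with $0 < d_T(\xi^+, \eta) = \theta < \pi$, and the standard CAT($0$) fact that a Tits distance below $\pi$ is realized produces a flat sector $S$ of angle $\theta$ one of whose bounding rays is asymptotic to $\xi^+$. I would then translate $S$ toward $\xi^+$ by unboundedly large amounts, using the recurrence isometries $\gamma_n$ (which asymptotically translate along $c$ and preserve the endpoint pair) together with the strong north-south dynamics of \lemref{ping-pong}; passing to a limit by properness and Arzel\`a--Ascoli, and using convexity of the metric to control the limit, the translates sweep out a flat half-plane asymptotic to $\xi^+$. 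Transporting this configuration by ping-pong to the rank one axis exhibits a flat half-plane along that axis, contradicting that it is rank one. This is exactly the mechanism behind the classical fact that the endpoints of a rank one geodesic are isolated in the Tits metric, now deployed for $m$-almost every recurrent geodesic.

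The crux, and the main obstacle, is this amplification step, for two reasons. First, rank one forbids only flat half-planes and not the thin flat sectors that non-isolation produces, so the small-angle sectors must genuinely be grown into a half-plane; this is precisely what the \emph{unbounded} translation coming from recurrence ($t_n \to \infty$) is for, and ensuring the limiting half-plane is totally geodesic rather than degenerate is where properness and CAT($0$) convexity do the real work. Second, the recurrence isometries $\gamma_n$ only \emph{approximately} fix $\xi^+$, so to obtain a clean stacking of translates I would relay the flat sector, via the north-south dynamics of \lemref{ping-pong}, to the attracting endpoint of the genuine axis, which is fixed exactly by a hyperbolic isometry and along which the sectors can be stacked and the rank one hypothesis contradicted.
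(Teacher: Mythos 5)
Your overall architecture---reduce to geodesics, invoke Poincar\'e recurrence for the finite flow-invariant measure $m_\Gamma$, then use the recurrence isometries to amplify a flat sector into a flat half-plane---is the paper's (\lemref{recurrence}, \lemref{rec => Tits-isolated}, \thmref{isolated almost everywhere}), and your amplification step is essentially the paper's Arzel\`a--Ascoli argument. But there is a genuine gap at exactly the step you call the crux: you assert, as a ``standard CAT($0$) fact,'' that $0 < d_T(\xi^+, \eta) = \theta < \pi$ by itself produces a flat sector of angle $\theta$ with a bounding ray asymptotic to $\xi^+$. No such fact exists. The Flat Sector Theorem requires the Tits angle to be \emph{realized} at a point, $\angle_p(\xi^+,\eta) = \angle(\xi^+,\eta)$; since $\angle = \sup_p \angle_p$, this supremum need not be attained---a complete surface of revolution with everywhere strictly negative curvature but asymptotically conical geometry (cone angle $> 2\pi$) has pairs at Tits distance in $(0,\pi)$ yet contains no flat sector at all. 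Nor does cocompactness rescue the claim as you state it: the natural argument (choose $p_n$ with $\angle_{p_n} \to \angle$ and translate back to a fundamental domain) realizes the angle only for a \emph{translated} limit pair $(\lim \gamma_n \xi^+, \lim \gamma_n \eta)$, severing the connection to $\xi^+$. This is precisely why the paper deploys recurrence \emph{before} any sector exists, not merely to amplify one afterwards: in \lemref{rec => Tits-isolated}, the recurrence data satisfy $\gamma_n v^+ \to v^+$ and $\gamma_n v(t_n) \to v(0)$, so upper semicontinuity of $\angle_p$, the monotone convergence $\angle_{v(t_n)}(\xi, v^+) \to \angle(\xi, v^+)$, and lower semicontinuity of $\angle$ combine to give $\angle_{v(0)}(\eta, v^+) = \angle(\eta, v^+)$ for $\eta = \lim \gamma_n \xi$. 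Only then does the Flat Sector Theorem apply, and the sector it produces is anchored with apex $v(0)$ and boundary ray $v\vert_{[0,\infty)}$---which is what makes the amplification clean and lets one contradict $v \in \Reg$ directly, with no need for your final ``relay to the axis'' by ping-pong.

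Two secondary points. First, at this stage of the paper the measure $m$ lives on $\GER$, not on $SX$, so Poincar\'e recurrence yields only \emph{weak} recurrence ($\pi_x(\gamma_n g^{t_n} v) \to \pi_x(v)$), whereas your geometric argument needs honest convergence of geodesics in $SX$; the paper bridges this with \lemref{quotient}, which applies only to rank one geodesics and returns strong subconvergence to an \emph{equivalent} geodesic $u \sim v$. Second, to guarantee it is working with rank one geodesics (and to package the conclusion measure-theoretically), the paper does not argue by contradiction from $\mu_x(N) > 0$: for each $\xi \in \bd X$ it uses \lemref{ping-pong} and \lemref{blem} to produce a product neighborhood $U \times V \subseteq \RE$ with $\xi \in V$, applies \lemref{recurrence} and Fubini inside $U \times V$ to conclude $\mu_x(V \setminus \Omega) = 0$ for the set $\Omega$ of Tits-isolated points, and finishes by compactness of $\bd X$. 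Your reduction through $\GE$ glosses over both the weak/strong distinction and the restriction to $\RE$ that \lemref{quotient} requires.
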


As a corollary, the equivalence classes of higher rank geodesics have zero measure under $m$.  We then show that, in fact, \mae{$m$} geodesic bounds no flat strip of any positive width.  Clearly we need to prevent obvious counterexamples such as those of the form $X = X' \times [0, 1]$, where $X'$ is a CAT($0$) space; the condition we require is that some geodesic in $X$ with both endpoints in $\Lam$, the limit set of $\Gamma$ in $\bd X$, has zero width.  Write $\ZL$ for the set of such geodesics.  Note that our condition holds automatically in the geodesically complete case (see \propref{strong density}).

\newcommand{\statelonely}
{%
  Let $X$ be a proper \textnormal{CAT($0$)} space under a proper, non-elementary, isometric action by a group $\Gamma$ with a rank one element.  Suppose $m_{\Gamma}$ is finite and $\ZL$ is nonempty.  Then the set $\ZE \subseteq \GE$ of endpoint pairs of zero-width geodesics has full $\mu$-measure.
}%

\begin{main}[\thmref{a.e.-geodesic-is-lonely}]
\label{main lonely}
\statelonely
\end{main}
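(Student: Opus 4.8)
The plan is to derive the statement from \thmref{main isolated} by a pointwise argument on endpoint pairs, isolating the one genuinely hard step: the passage from flat \emph{half-planes} to flat \emph{strips of finite width}. First I would record that the measure $\mu$ on $\GE$ is absolutely continuous with respect to $\mu_x \times \mu_x$ (it is this product, restricted to $\GE$ and weighted by the appropriate Gromov-product density, that defines $\mu$). Hence if $N \subset \bd X$ denotes the $\mu_x$-null set of points that are \emph{not} isolated in the Tits metric, then $(N \times \bd X) \cup (\bd X \times N)$ is $\mu$-null, so for $\mu$-a.e.\ pair $(\xi, \eta) \in \GE$ both $\xi$ and $\eta$ are isolated in the Tits metric.

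Next I would show that Tits-isolation of an endpoint rules out a flat half-plane. If a geodesic $\gamma$ with $\gamma(+\infty) = \xi$ bounded a flat half-plane $H \cong \R \times [0, \infty)$, then the rays into $H$ from a basepoint on $\gamma$ would sweep out a Tits arc of length $\pi$ terminating at $\xi$: the ray making angle $\theta$ with $\gamma$ limits to a boundary point $\eta_\theta$ with Tits distance $Td(\xi, \eta_\theta) = \theta \to 0$, so $\xi$ could not be isolated. Therefore, for $\mu$-a.e.\ pair, no geodesic joining $\xi$ to $\eta$ bounds a flat half-plane---this is exactly the corollary announced before the theorem. Equivalently, the parallel set of such a geodesic splits as $P(\gamma) = Y \times \R$ with $\bd_\infty Y = \emptyset$; since $Y$ is a closed convex (hence proper and complete) subspace of $X$, emptiness of its boundary forces $Y$ to be bounded, thus compact. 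So $\mu$-a.e.\ geodesic has finite width.

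The crux---the whole novelty flagged in the introduction---is upgrading finite width to zero width, i.e.\ ruling out a \emph{capped} flat strip whose compact cross-section $Y$ is nondegenerate of finite diameter. The obstruction is real: Tits-isolation alone is consistent with such a strip, because a strip of finite width subtends zero angular width at infinity and so contributes only the two (isolated) endpoints $\xi, \eta$ to $\bd X$. The geometric claim I would aim for is that the cross-section $Y$ is itself geodesically complete; granting this, a compact geodesically complete CAT($0$) space must be a single point (any nontrivial segment extends to a ray, producing a boundary point and contradicting compactness), whence $Y$ is a point and $\gamma$ has zero width. The difficulty is precisely that a transverse geodesic segment of $Y$, extended in $X$ by geodesic completeness, may \emph{leave} $P(\gamma)$ at an extreme point of $Y$ rather than extending within it; since $P(\gamma)$ is the minimum set of the convex Busemann sum $b_\xi + b_\eta$, convexity permits exactly this a priori. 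This is where cocompactness must enter essentially: I would use finiteness and flow-invariance of $m_\Gamma$ (\propref{Bowen-Margulis quotient measure}) to invoke Poincar\'e recurrence on $\modG{(\GER)}$, so that $m_\Gamma$-a.e.\ geodesic, together with its width-$w$ strip, returns arbitrarily close to itself under $\Gamma$ infinitely often; the resulting near-self-isometries approximately translate the strip along itself, and I would argue that their limiting behavior either extends the flat structure transversally---contradicting maximality of the strip---or opens it into a flat half-plane. Making this amplification precise, and in particular extracting an honest transverse flat extension from the recurrence data, is the step I expect to be the main obstacle. Once a positive-$\mu$-measure set of finite-width geodesics is shown to force a positive-measure set of half-planes---equivalently, of non-Tits-isolated endpoints---the theorem follows by contradiction with \thmref{main isolated}, giving $\mu(\ZE) = \mu(\GE)$.
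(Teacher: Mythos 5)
Your reduction is sound up to the point you yourself flag as the crux: the absolute continuity of $\mu$ with respect to $\mu_x \times \mu_x$, the fact that Tits-isolation of an endpoint kills flat half-planes, and the resulting compactness of the cross-section $Y_v$ for $\mu$-a.e.\ pair all match the paper (this is \corref{RE has full measure} and the remark following it). The gap is in the mechanism you propose for upgrading finite width to zero width, and it is not merely a missing detail---the dichotomy you aim for is false pointwise. Take the paper's own example (Remark following \lemref{quotient}): a closed hyperbolic surface with a simple closed geodesic replaced by a flat cylinder of width $1$. In the universal cover, the two boundary geodesics of the lifted flat strip are axes of hyperbolic elements of $\Gamma$, hence periodic and in particular $\Gamma$-recurrent; they are rank one, they bound a maximal flat strip of width $1$, and they bound no flat half-plane. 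So ``recurrence of a geodesic together with its width-$w$ strip forces either a transverse flat extension or a half-plane'' fails. The underlying reason is that recurrence of a \emph{single} geodesic $v$ only produces isometric embeddings $Y_v \into Y_v$ (\lemref{strong acc transversals}), and an isometric self-embedding of a compact metric space is automatically surjective: the near-self-isometries simply carry the strip onto itself and amplify nothing. For the same reason your final step---deducing a positive-measure set of half-plane geodesics from a positive-measure set of positive-width geodesics---cannot be established by any pointwise argument: positive-width recurrent geodesics genuinely exist, and the theorem's content is precisely that they form a $\mu$-null set.

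What the paper does instead is use recurrence to compare \emph{different} geodesics. If $v$ is weakly $\Gamma$-recurrent and $w^+ = v^+$, then $Y_w$ embeds isometrically into $Y_v$ (\corref{weak rec transversals}); by Fubini and flip-invariance, a.e.\ pairs of geodesics sharing an endpoint have mutually embeddable, hence isometric, compact transversals, and the duality-based zero-one lemmas (\lemref{duality-prop}, \lemref{constant a.e.}, \corref{constant a.e. 3}) upgrade this to constancy of the isometry type of $Y_v$ for $\mu$-a.e.\ $(v^-,v^+)$ (\lemref{unique-isometry-type}). Zero width then follows from a separate idea you would also need: restrict attention to central (circumcenter) geodesics, and use upper semicontinuity of $v \mapsto Y_v$ (\lemref{upper semicontinuity of Y_v}) together with \lemref{duality-prop} to show that every geodesic in the resulting full-measure set $\Specialset$ is central; since $\Specialset$ is saturated under the parallelism relation $\sim$ and each parallel class contains exactly one central geodesic, every $Y_v$ with $v \in \Specialset$ is a single point. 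Both ingredients---constancy of the transversal's isometry type across a.e.\ pairs, and the circumcenter argument---are absent from your proposal, and neither can be replaced by self-recurrence of individual strips.
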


Thus ``most'' geodesics do not bound a flat strip of any positive width, hence we may finally define Bowen-Margulis measures (also denoted $m$ and $m_\Gamma$) on the generalized unit tangent bundle $SX$ of $X$---called the space of geodesics of $X$ by Ballmann (\cite{ballmann})---and on $\modG{SX}$, by an near-canonical ``a.e.~ homeomorphism'' $\pi_x \colon SX \to \GER$.

The classical argument by Hopf (\cite{hopf71}) is readily adapted to prove ergodicity of the geodesic flow:

\newcommand{\stateergodicity}[1]
{%
  Let $X$ be a proper \textnormal{CAT($0$)} space under a proper, non-elementary, isometric action by a group $\Gamma$ with a rank one element.  Suppose $m_{\Gamma}$ is finite and $\ZL$ is nonempty.  Then all the following hold:
  \begin{enumerate}
  \item
  \ifstrequal{#1}{true}
    {\label{erg: ergodicity}}
    {}
  The Bowen-Margulis measure $m_\Gamma$ is ergodic under the geodesic flow on $\modG{SX}$.
  \item
  \ifstrequal{#1}{true}
    {\label{erg: diagonal}}
    {}
  The diagonal action of $\Gamma$ on $(\GE, \mu)$ is ergodic.
  \item
  \ifstrequal{#1}{true}
    {\label{erg: p-s}}
    {}
  The Patterson-Sullivan measure $\mu_x$ is both unique---that is, $\family{\mu_x}_{x \in X}$ is the unique conformal density of dimension $\delta_{\Gamma}$, up to renormalization---and quasi-ergodic.
  \item
  \ifstrequal{#1}{true}
    {\label{erg: div. type}}
    {}
  The Poincar\'e series $P(s, p, q)$ diverges at %
$s = \delta_\Gamma$ (i.e., $\Gamma$ is of divergence type).
  \end{enumerate}
}%

\begin{main}[\thmref{ergodicity}]
\label{main ergodicity}
\stateergodicity{false}
\end{main}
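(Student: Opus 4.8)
The plan is to run the classical Hopf argument, adapted to the product structure on $\GER$ that underlies the Bowen-Margulis measure. Since $m_\Gamma$ is a finite, flow-invariant Borel measure, ergodicity of the geodesic flow $g^t$ is equivalent to the assertion that every $g^t$-invariant function in $L^1(m_\Gamma)$ agrees $m_\Gamma$-a.e.\ with a constant; and by a standard density argument it suffices to prove this for the Birkhoff averages of bounded, uniformly continuous, $\Gamma$-invariant functions $f$ on $SX$. First I would invoke the Birkhoff ergodic theorem to produce, for such an $f$, the forward and backward time averages $f^+$ and $f^-$, which exist $m$-a.e.\ on $SX$ and satisfy $f^+ = f^-$ a.e., both being versions of the conditional expectation onto the invariant $\sigma$-algebra.

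The heart of the argument is to show that $f^+$ is constant along strong stable manifolds and $f^-$ along strong unstable manifolds. Concretely, if two geodesics $v,w$ share their forward endpoint in $\bd X$ and are correctly synchronized (matching Busemann heights), I claim that $d(g^t v, g^t w) \to 0$ as $t \to +\infty$, so that $\abs{f(g^t v) - f(g^t w)} \to 0$ by uniform continuity, and hence $f^+(v) = f^+(w)$. This is exactly where \thmref{main lonely} enters: for $m$-a.e.\ geodesic the zero-width property rules out the flat strip that the Flat Strip Theorem would otherwise produce from two asymptotic geodesics staying a bounded distance apart, forcing the convergence $d(g^t v, g^t w) \to 0$. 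In the product coordinates $\GE \subseteq \double{\bd X}$, this means $f^+$ depends ($\mu$-a.e.) only on the forward endpoint $\xi_+$, while the symmetric argument shows $f^-$ depends only on the backward endpoint $\xi_-$; each is moreover constant in the flow direction.

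With these invariances in hand, the final step is a Fubini-type argument. Since $f^+ = f^-$ a.e.\ with respect to $\mu$, we obtain a function on $\GE$ that equals $\mu$-a.e.\ a function $a(\xi_+)$ of the forward coordinate alone and also $\mu$-a.e.\ a function $b(\xi_-)$ of the backward coordinate alone. Because $\mu$ is equivalent to the restriction of $\mu_x \times \mu_x$ to $\GE$, and $\mu_x$ has full support on $\bd X$ (guaranteed by the rank one axis, cf.\ the discussion preceding \thmref{main isolated}), the relation $a(\xi_+) = b(\xi_-)$ for $\mu$-almost every pair forces both $a$ and $b$ to be constant. Thus $f^+$ is $m_\Gamma$-a.e.\ constant for every bounded uniformly continuous $f$, which yields ergodicity of $m_\Gamma$.

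I expect the main obstacle to be the second step---establishing invariance of $f^+$ along strong stable manifolds rigorously in the CAT($0$) setting. The convergence $d(g^t v, g^t w) \to 0$ is not automatic: in general CAT($0$) geometry, asymptotic geodesics only satisfy that their distance is nonincreasing and bounded, and one must combine the almost-everywhere zero-width conclusion of \thmref{main lonely} with careful control of the synchronization (so the comparison is along matched times rather than merely along asymptotic rays) to push the bounded distance down to $0$ on a full-measure set. Handling the measure-zero exceptional set of higher-rank geodesics, and confirming that the stable/unstable/flow decomposition genuinely recovers the product coordinates on $\GE$ up to $\mu$-null sets, are the technical points that lean most heavily on the structural results of the previous sections.
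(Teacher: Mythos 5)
Your high-level skeleton---the Hopf argument, invariance of the forward and backward time averages along stable and unstable sets, then a Fubini argument against the product structure of $\mu$ to force constancy---is the same as the paper's, and your final step is essentially the paper's \lemref{constant a.e. 2}. The genuine gap is in your second step, precisely where you locate the difficulty: the mechanism you propose for proving $d(g^t v, g^t w) \to 0$ does not work. The Flat Strip Theorem produces a strip only from \emph{parallel} geodesics, i.e.\ geodesics at bounded distance over all of $\R$. Two synchronized, forward-asymptotic geodesics in a CAT($0$) space have $t \mapsto d(v(t), w(t))$ convex and non-increasing on $[0,\infty)$, hence convergent to some $c \ge 0$; but if $c > 0$ this bounds no flat region anywhere near $v$. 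A flat strip appears only after passing to a sub-limit of the pair $(g^{t_n} v, g^{t_n} w)$ under suitable $\gamma_n \in \Gamma$ (using cocompactness), and that strip bounds the \emph{limit} geodesic, not $v$ itself. Since \thmref{main lonely} says only that the positive-width geodesics form a $\mu$-null set, not an empty set, the limit can perfectly well land in that null set; so ``$v$ has zero width'' plus the Flat Strip Theorem does not force $c = 0$.

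What closes this gap is recurrence, which your sketch never invokes. Because $m_\Gamma$ on $\modG{(\GER)}$ is finite (\propref{Bowen-Margulis quotient measure}), Poincar\'e recurrence (\lemref{recurrence}) makes almost every geodesic $\Gamma$-recurrent, and for a recurrent zero-width $v$ one can choose $\gamma_n$ and $t_n \to +\infty$ with $\gamma_n g^{t_n} v \to v$; then any sub-limit $u$ of $\gamma_n g^{t_n} w$ satisfies $d(u(s), v(s)) \equiv c$, so $u$ is parallel to $v$, and zero width forces $c = 0$. This is exactly \lemref{strong rec} and \lemref{weak rec} combined with \thmref{a.e.-geodesic-is-lonely}, and with it your contraction claim becomes true on a full-measure set, so your outline can be completed. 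Note, though, that the paper itself never proves the contraction: in its proof of \thmref{ergodicity} it runs Hopf directly along the recurrence sequence, using only that (i) $\gamma_n g^{t_n} w \to v$ strongly (\lemref{weak rec} plus zero width), (ii) convexity makes the distance between geodesics sharing a forward endpoint non-increasing, so closeness persists for all forward time, and (iii) Ces\`aro/Birkhoff averages are insensitive to the bounded time shift $t_n$ and to the isometries $\gamma_n$ by $\Gamma$-invariance. So in the paper recurrence substitutes for, rather than proves, stable-manifold contraction; either route works once recurrence is available, but without it your step two is unjustified.
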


Then we characterize mixing.  This is more subtle than ergodicity, as some examples are known to be mixing, and others are known to be non-mixing.  For $\modG{X}$ a compact Riemannian manifold, Babillot (\cite{babillot}) showed that $m_\Gamma$ is mixing on $\modG{SX}$.  However, it is easy to see that if $X$ is a tree with only integer edge lengths, then $m_\Gamma$ is not mixing under the geodesic flow.  Nevertheless, we prove that if $X$ is geodesically complete and has full limit set, but $m_\Gamma$ is finite and non-mixing, then $X$ is homothetic to such a tree:

\newcommand{\statetrees}[1]
{%
	Let $X$ be a proper, geodesically complete \textnormal{CAT($0$)} space under a proper, non-elementary, isometric action by a group $\Gamma$ with a rank one element.  Suppose $m_{\Gamma}$ is finite and $\Lam = \bd X$.  The following are equivalent:
		\begin{enumerate}
	\item
		\ifstrequal{#1}{true}
	{\label{condition: not mixing}}
	{}
	The Bowen-Margulis measure $m_\Gamma$ is not mixing under the geodesic flow on $\modG{SX}$.
		\item
		\ifstrequal{#1}{true}
	{\label{condition: length spectrum arithmetic}}
	{}
	The length spectrum is arithmetic---that is, the set of all translation lengths of hyperbolic isometries in $\Gamma$ must lie in some discrete subgroup $c\Z$ of $\R$.
		\item
		\ifstrequal{#1}{true}
	{\label{condition: cross-ratios arithmetic}}
	{}
	There is some $c \in \R$ such that every cross-ratio of $\QE$ lies in $c\Z$.
		\item
		\ifstrequal{#1}{true}
	{\label{condition: special tree}}
	{}
	There is some $c > 0$ such that $X$ is isometric to a tree with all edge lengths in $c \Z$.
		\end{enumerate}
}%

\begin{main}[\thmref{trees}]
\label{main trees}
\statetrees{false}
\end{main}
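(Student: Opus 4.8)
The plan is to break the four-way equivalence into the three links $(1)\Leftrightarrow(3)$ (mixing versus cross-ratios), $(2)\Leftrightarrow(3)$ (length spectrum versus cross-ratios), and $(3)\Leftrightarrow(4)$ (cross-ratios versus tree structure). The first is a Babillot-type dynamical argument, the second is comparatively soft, and the third carries the genuinely new geometric input. The whole argument rests on \thmref{main lonely}: because \mae{$m$} geodesic has zero width, almost every geodesic is recovered from its endpoint pair, so the Bowen-Margulis flow is faithfully modeled on $\GER$ and is controlled by the cross-ratio $\cratio$ on $\QE$.

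For $(1)\Leftrightarrow(3)$ I would adapt Babillot's criterion. By ergodicity (\thmref{main ergodicity}), the flow fails to be mixing exactly when it admits a nonconstant measurable eigenfunction $f$, i.e.\ $f$ transforms by a unit-modulus character $e^{2\pi i \alpha t}$ under the time-$t$ map, for some $\alpha\neq0$. Lifting $f$ to $\GER$ and using the near-product structure of $m$ together with $\Gamma$-invariance, I would derive a functional equation showing that the phase of $f$ changes, when one transverse endpoint is replaced by another, by $\alpha$ times a cross-ratio. Evaluating this relation around a quadrilateral forces $\alpha\,\cratio(\cdots)\in\Z$ for \mae{$\mu^{\otimes4}$} quadruple, and full support of $\mu$ promotes this to all of $\QE$, so that $\cratio(\QE)\subseteq c\Z$ with $c=\alpha^{-1}$. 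Conversely, if all cross-ratios lie in $c\Z$, the same functional equation can be solved to \emph{produce} a nontrivial eigenfunction, defeating mixing. The subtlety particular to CAT($0$) spaces is that branching makes $\cratio$ only almost-everywhere defined and continuous, so the equation must be manipulated on full-measure sets and recombined using density of rank-one axis endpoints (\lemref{ping-pong}).

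The equivalence $(2)\Leftrightarrow(3)$ is then soft. For a hyperbolic $g\in\Gamma$ with axis endpoints $g^\pm$, the translation length $\ell(g)$ equals $\cratio(g^-,g^+,\xi,g\xi)$ for suitable $\xi$, so the length spectrum sits inside the cross-ratio spectrum and $(3)\Rightarrow(2)$ is immediate. For $(2)\Rightarrow(3)$ I would use that rank-one axis endpoints are dense in $\bd X$ (again \lemref{ping-pong}) and that $\cratio$ is continuous where defined, so every cross-ratio is a limit of $\Z$-combinations of translation lengths; hence the subgroups of $\R$ generated by the two spectra coincide, and one lies in a discrete $c\Z$ iff the other does.

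The heart of the theorem is $(3)\Leftrightarrow(4)$, and I expect $(3)\Rightarrow(4)$ to be the main obstacle. The direction $(4)\Rightarrow(3)$ is a direct computation: on a tree the cross-ratio reduces to an alternating sum of Gromov products, i.e.\ to signed distances between branch points, which all lie in $c\Z$ once the edge lengths do. For $(3)\Rightarrow(4)$ I would argue contrapositively from the local geometry, shifting attention from $\bd X$ to the link of each point. If $X$ is not a tree, some link contains a nondegenerate Tits segment, producing a continuous family of geodesics through a point whose pairwise Gromov products, and hence an associated family of cross-ratios, vary continuously and fill a nondegenerate interval, contradicting discreteness of $\cratio(\QE)$. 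Making this rigorous is the crux: one must convert a positive-length Tits configuration in a link into an honest one-parameter family of boundary quadruples on which $\cratio$ is nonconstant and continuous, use geodesic completeness and cocompactness to globalize the local picture, and then read off that the branch loci are $c\Z$-separated, so that $X$ is isometric to a tree with edge lengths in $c\Z$. This is exactly where the CAT($0$) difficulties concentrate, since it is the step that must distinguish branching trees from all other rank-one spaces.
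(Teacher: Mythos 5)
Your decomposition (1)$\Leftrightarrow$(3), (2)$\Leftrightarrow$(3), (3)$\Leftrightarrow$(4) matches the paper's, and your treatment of (2)$\Leftrightarrow$(3) and of (4)$\Rightarrow$(3) is essentially the paper's (Lemmas \ref{lengths in cross-ratios} and \ref{cross-ratios from lengths} supply the exact identities you invoke). But there is a genuine gap at the start of your dynamical step: you assert that, given ergodicity, the flow fails to be mixing exactly when it admits a nonconstant measurable eigenfunction. That equivalence characterizes \emph{weak} mixing, not mixing; there exist ergodic, weakly mixing systems that are not mixing, and ergodicity does not close this gap. What failure of mixing actually gives you is only a test function $\varphi$ and a sequence $t_n\to\infty$ along which $\varphi\circ g^{t_n}$ does not converge weakly to a constant---no character, no phase, no functional equation. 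Bridging precisely this gap is the content of \lemref{babillot lemma} (Babillot's lemma), which the paper uses instead: from the non-mixing sequence one extracts a nonconstant $\psi\in L^2$ that is a simultaneous weak limit of $\varphi\circ g^{s_n}$ and $\varphi\circ g^{-s_n}$ and an a.e.\ limit of Cesaro averages; after smoothing, $\psi$ is shown to be constant along stable and unstable horospheres on a full-measure set (via strong recurrence of $\Specialset$), and then \lemref{twisting} and \lemref{periods} show that every cross-ratio is a \emph{period} of $t\mapsto\psi(g^t v)$. Arithmeticity then follows from ergodicity (\corref{ergodicity of mu}) applied to the period-group map---whose measurability needs its own argument (\thmref{measurable periods})---not from any eigenvalue equation. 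Your converse within (1)$\Leftrightarrow$(3), solving the functional equation to produce an eigenfunction, is also unnecessary: once (3)$\Rightarrow$(4) is in place, non-mixing follows because the flow on a tree with edge lengths in $c\Z$ factors continuously over a circle.

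Separately, your (3)$\Rightarrow$(4) is only a sketch at the point you yourself identify as the crux, and the paper's mechanism there is not quite the one you propose. Rather than producing a one-parameter family of quadrilaterals with continuously varying, nonconstant cross-ratio, the paper fixes $p\in X$ and shows that each antipode set $\rho(A_p(\eta))$ is clopen in the link $\link{p}{X}$, using \lemref{detecting geodesics} ($\beta_p(\xi,\eta)=0$ iff $\angle_p(\xi,\eta)=\pi$) together with discreteness and continuity of $\cratio$; a separate lemma (\lemref{quasi-tree}, showing $\Reg=SX$) is needed just to guarantee that the required quadrilaterals exist. Clopen-ness forces every component of the link to have diameter less than $\pi$; geodesic completeness of links (\propref{geodesic completeness}) and the CAT($1$) property then force the link to be discrete, so $X$ is a metric simplicial tree, and \lemref{tree lengths} pins the edge lengths to $c\Z$. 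So your instinct to localize at links is the right one, but the step that actually closes the argument is a connectedness (clopen) argument rather than a direct continuity-of-cross-ratio argument, and the availability of enough quadrilaterals is itself a nontrivial consequence of (3) that your sketch takes for granted.
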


A few remarks are in order.  First, if $m_\Gamma$ is not mixing, it also fails to be weak mixing or topological mixing because $\modG{SX}$ factors continuously over a circle for the trees in \thmref{main trees}; thus weak mixing and topological mixing are equivalent to mixing in this setting (by \propref{strong density}, $m_\Gamma$ has full support under the hypotheses of \thmref{main trees}).  Second, geodesic completeness is necessary for the above geometric characterization of mixing:  Take four congruent hyperbolic right isosceles triangles with rationally related edge lengths, and glue them together to form a regular hyperbolic quadrilateral; gluing together opposing vertices gives a locally CAT($-1$) space with arithmetic length spectrum.

The equivalence between mixing and non-arithmeticity of the length spectrum is not new for CAT($-1$) spaces (see \cite{babillot} and \cite{roblin}).  However, the only known geodesically complete examples with arithmetic length spectrum are trees.  Babillot and Roblin raised the question of what CAT($-1$) spaces other than trees could be non-mixing under a proper, non-elementary action.  For compact rank one nonpositively curved manifolds, Babillot (\cite{babillot}) showed that the Bowen-Margulis measure is always mixing; this also holds for proper CAT($-1$) spaces when $\Gamma$ has a parabolic element (see \cite{roblin}) and for rank one symmetric spaces (see \cite{kim01}).

However, it can still be difficult to determine if the length spectrum is arithmetic.  For instance, it remains an open question whether the length spectrum is always non-arithmetic for non-compact manifolds with non-elementary fundamental group, even with curvature $\le -1$ (see \cite{roblin}).  \thmref{main trees} shows, in particular, that trees are in fact the only examples of arithmetic length spectrum among cocompact and geodesically complete CAT($0$) spaces.

We highlight one particular difficulty in characterizing mixing here, as it is not obvious.  For proper CAT($-1$) spaces, if the length spectrum is arithmetic, the limit set is totally disconnected (see \cite{roblin}).  One might suppose, for $X$ a proper, cocompact, geodesically complete CAT($-1$) space, that having totally disconnected boundary might make it a tree, hence by the characterization of topological mixing for trees one might prove something like \thmref{main trees}.  However, this line of argument does not work.  Ontaneda (see the proof of Proposition 1 in \cite{ontaneda}) described proper, geodesically complete CAT($0$) spaces that admit a proper, cocompact, isometric action of a free group---hence they are quasi-isometric to trees and, in particular, have totally disconnected boundary---yet are not isometric to trees.  Ontaneda's examples are Euclidean $2$-complexes, but one can easily adapt the construction to hyperbolic $2$-complexes instead.  Thus there are proper, cocompact, geodesically complete CAT($-1$) spaces with totally disconnected boundary that are not isometric to trees.

Now, one the main applications we have in mind is to the cocompact and geodesically complete case.  In this setting, all the hypotheses of Theorems \ref{main isolated}--\ref{main trees} hold.

\newcommand{\statecocompactness}
{%
  Let $X$ be a proper, geodesically complete \textnormal{CAT($0$)} space under a proper, cocompact, isometric action by a group $\Gamma$ with a rank one element, and suppose $X$ is not isometric to the real line. Then $X$ satisfies the hypotheses of Theorems \ref{main isolated}--\ref{main trees}:  $m_{\Gamma}$ is finite, the $\Gamma$-action is non-elementary, $\Lam = \bd X$, and $\ZL$ is nonempty.
}%

\begin{main}[\thmref{cocompactness results}]
\label{main cocompactness}
\statecocompactness
\end{main}

Finally, we observe that Theorems \ref{main isolated}--\ref{main trees} hold (in slightly altered form) more generally than just for the Bowen-Margulis measure.  Specifically, our methods naturally extend to any finite quasi-product measure $m_{\Gamma}$ on $\modG{SX}$.  In the final section of the paper, we state the results in this generality.

\section{Patterson's Construction}

First we recall the construction of Patterson-Sullivan measures.  This construction is standard and due to Patterson (\cite{patterson}) for Fuchsian groups.

\begin{standing hypothesis}
In this section, let $X$ be a proper metric space---that is, a metric space in which all closed metric balls are compact.  Let $\Gamma$ be an infinite group of isometries acting properly discontinuously on $X$---that is, for every compact set $K \subseteq X$, there are only finitely many $\gamma \in \Gamma$ such that $K \cap \gamma K$ is nonempty.
\end{standing hypothesis}

For $p,q \in X$, $s \in \R$, the Dirichlet series
\[P(s, p, q) = \sum_{\gamma \in \Gamma} e^{-s d(p, \gamma q)}\]
is called the Poincar\'e series associated to $\Gamma$.  The following observation is standard.

\begin{lemma}
The critical exponent
\[\delta_\Gamma = \inf \setp{s \ge 0}{P(s, p, q) < \infty}\]
does not depend on choice of $p$ or $q$.
\end{lemma}

We will work only in the case that $\delta_\Gamma$ is finite.  This assumption is quite mild, however, because if $\Gamma$ is finitely generated, then $\delta_\Gamma$ is finite.  In particular, if $X$ is connected, and $\Gamma$ acts cocompactly on $X$, then $\delta_\Gamma$ is finite.

\begin{definition}
Let $X$ be a proper metric space.  Write $C(X)$ for the space of continuous maps $X \to \R$, equipped with the compact-open topology.  Fix $p \in X$, and let $\Busemanninclusion_p \colon X \to C(X)$ be the embedding given by $x \mapsto [d(\cdot, x) - d(p, x)]$.  The \defn{Busemann compactification of $X$}, denoted $\bar X$, is the closure of the image of $\Busemanninclusion_p$ in $C(X)$.  %
\end{definition}

For $\xi \in \bar X$, technically $\xi$ is a function $\xi \colon X \to \R$.  However, one usually prefers to think of $\xi$ as a point in $X$ (if $\xi$ lies in the image of $\Busemanninclusion_p$) or in the \defn{Busemann boundary}, $\bd X = \bar X \setminus X$, of $X$.  Instead of working with the function $\xi \colon X \to \R$, we will work with the Busemann function $b_\xi \colon X \times X \to \R$ given by $b_{\xi} (x, y) = \xi (x) - \xi (y)$.  Note that $b_\xi$ (unlike $\xi \colon X \to \R$) does not depend on choice of $p \in X$.

The Busemann functions $b_{\xi}$ are $1$-Lipschitz in both variables and satisfy the \defn{cocycle property} $b_{\xi} (x, y) + b_{\xi} (y, z) = b_{\xi} (x, z)$.  Furthermore, $b_{\gamma \xi} (\gamma x, \gamma y) = b_{\xi} (x, y)$ for all $\gamma \in \Isom X$.

For a measure $\mu$ on $X$ and a measurable map $\gamma \colon X \to X$, we write $\gamma_* \mu$ for the pushforward measure given by $(\gamma_* \mu)(A) = \mu (\gamma^{-1}(A))$ for all measurable $A \subseteq X$.

\begin{definition}\label{conformal density}
A family $\family{\mu_p}_{p \in X}$ of finite Borel measures on $\bd X$ is called a \defn{conformal density of dimension $\delta$} if
\begin{enumerate}
\item \label{equivariance}
$\gamma_* \mu_p = \mu_{\gamma p}$ for all $\gamma \in \Gamma$ and $p \in X$, and
\item \label{Radon-Nikodym}
for all $p, q \in X$, the measures $\mu_p$ and $\mu_q$ are equivalent with Radon-Nikodym derivative
\[\frac{d\mu_q}{d\mu_p}(\xi) = e^{-\delta b_\xi (q, p)}.\]
\end{enumerate}
\end{definition}

The \defn{limit set} $\Lam$ of $\Gamma$ is defined to be the subset of $\bd X$ given by
\[\Lam = \setp{\xi \in \bd X}{\gamma_i x \to \xi \text{ for some } (\gamma_i) \subset \Gamma \text{ and } x \in X}.\]
For a Borel measure $\nu$ on a topological space $Z$, its \defn{support} is the set
\[\supp(\nu) = \setp{z \in Z}{\nu(U) > 0 \text{ for every neighborhood } U \text{ of } z \in Z}.\]
We say $\nu$ has \defn{full support} if $\supp(\nu) = Z$.  Note $\supp (\mu_p) = \supp (\mu_q)$ for all $p, q \in X$, for any conformal density $\family{\mu_p}_{p \in X}$.  Thus the support of $\family{\mu_p}_{p \in X}$ is well-defined.

The classical construction of Patterson extends to the following setting:

\begin{theorem}[Patterson]\label{patterson}
Let $\Gamma$ be an infinite group of isometries acting properly discontinuously on a proper metric space $X$, and suppose $\delta_\Gamma < \infty$.  Then the Busemann boundary of $X$ admits a conformal density of dimension $\delta_\Gamma$ with support in $\Lam$.
\end{theorem}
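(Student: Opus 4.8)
The plan is to run Patterson's classical construction, producing a conformal density as a weak-* limit of normalized orbit measures. First I would treat the generic case where the Poincar\'e series $P(s,p,q)$ diverges at the critical exponent $s=\delta_\Gamma$. Fix a basepoint $p$, and for each $s>\delta_\Gamma$ define a finite measure on $\bar X$ by
\[
\mu_p^s = \frac{1}{P(s,p,p)} \sum_{\gamma \in \Gamma} e^{-s\, d(p,\gamma p)} \, \delta_{\gamma p},
\]
where $\delta_{\gamma p}$ is the Dirac mass at the point $\Busemanninclusion_p(\gamma p) \in \bar X$. More generally, for the other basepoints $q$, I would weight by $e^{-s\, d(q,\gamma p)}$ but divide by the same normalizing constant $P(s,p,p)$, so that the family transforms correctly. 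Each $\mu_p^s$ is a probability measure on the compact metrizable space $\bar X$, so by weak-* compactness I can extract a sequence $s_n \downarrow \delta_\Gamma$ along which all the $\mu_q^{s_n}$ converge simultaneously (a diagonal argument over a countable dense set of basepoints suffices, since $\bar X$ is separable) to limits $\mu_q$.

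Next I would verify the three required properties of the limit. For the Radon--Nikodym relation, I would compute, for $s>\delta_\Gamma$, the ratio of the density defining $\mu_q^s$ to that defining $\mu_p^s$ at the atom $\gamma p$; this ratio is $e^{-s[d(q,\gamma p)-d(p,\gamma p)]}$, and as $\gamma p \to \xi$ the exponent converges to $-\delta_\Gamma\, b_\xi(q,p)$ by definition of the Busemann functions and the compact-open convergence on $\bar X$. Passing to the weak-* limit then yields property (\ref{Radon-Nikodym}). Equivariance (property (\ref{equivariance})) follows because reindexing the defining sum by $\gamma \mapsto \alpha\gamma$ pushes $\mu_p^s$ forward to $\mu_{\alpha p}^s$ up to an error in the normalization that vanishes in the limit; here one uses $d(\alpha^{-1}q, \gamma p) = d(q, \alpha\gamma p)$ and the $\Gamma$-equivariance $b_{\alpha\xi}(\alpha x,\alpha y)=b_\xi(x,y)$.

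The main obstacle is ensuring the limit measures are \emph{nontrivial} (nonzero) and \emph{supported in the limit set} $\Lambda(\Gamma)$. Divergence of $P(s,p,p)$ as $s\downarrow\delta_\Gamma$ is exactly what forces the mass to escape every compact subset of $X$, so no mass survives in the interior $X \subset \bar X$ in the limit; hence the limit is supported on $\bd X$, and by construction every accumulation point of the orbit $\Gamma p$ lies in $\Lambda(\Gamma)$, giving $\supp(\mu_p)\subseteq\Lambda(\Gamma)$. Nontriviality is immediate from the probability-measure normalization, which is preserved under weak-* limits. The remaining difficulty is the \emph{convergent} case, where $P(\delta_\Gamma,p,p)<\infty$: here the naive limit could place an atom at $p$ itself. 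I would handle this by Patterson's device of inserting a slowly varying weight $h(d(p,\gamma p))$ into the series---choosing a nondecreasing $h$ so that the modified Poincar\'e series $\sum_\gamma h(d(p,\gamma p))\, e^{-s d(p,\gamma p)}$ diverges at $s=\delta_\Gamma$ while $\log h(t)/t \to 0$, so $h$ does not disturb the exponential rate or the Radon--Nikodym computation. With this modification the divergent-case argument applies verbatim, completing the construction.
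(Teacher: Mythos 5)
Your divergent-case argument is essentially the paper's own construction: normalized orbital measures, a weak-$*$ limit on the compact space $\bar X$, support forced into $\Lambda(\Gamma)$ because divergence at $s=\delta_\Gamma$ pushes all mass out of $X$, and equivariance by reindexing the sum. The only cosmetic difference is that the paper takes a single weak-$*$ limit $\mu_x$ at the basepoint and then \emph{defines} $\mu_p$ by the density formula $d\mu_p(\xi) = e^{-\delta_\Gamma b_\xi (p, x)}\, d\mu_x(\xi)$, so that condition \itemref{Radon-Nikodym} holds by fiat and only equivariance needs checking, whereas you take limits at every basepoint and verify \itemref{Radon-Nikodym} afterwards. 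Both routes work; in fact your diagonal argument is unnecessary, since the ratio of the densities of $\mu_q^{s}$ and $\mu_p^{s}$ extends continuously to $\bar X$ and converges uniformly there as $s_n \searrow \delta_\Gamma$, so convergence of $\mu_p^{s_n}$ alone already forces convergence of every $\mu_q^{s_n}$.

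The genuine gap is in the convergent case, where ``applies verbatim'' conceals the only delicate point of the whole proof, and where the hypothesis you display for $h$ is too weak to close it. With the weight $h(d(p,\gamma p))$ inserted, the Radon--Nikodym ratios are indeed undisturbed (the $h$'s cancel), but equivariance is disturbed: reindexing now gives
\[
\alpha_*\mu_q^s=\frac{1}{\widetilde P(s,p,p)}\sum_{\gamma\in\Gamma}h\bigl(d(\alpha p,\gamma p)\bigr)\,e^{-s\,d(\alpha q,\gamma p)}\,\delta_{\gamma p},
\]
which differs term-by-term from $\mu_{\alpha q}^s$ by the factor $h(d(\alpha p,\gamma p))/h(d(p,\gamma p))$, so the exact identity $\alpha_*\mu_q^s=\mu_{\alpha q}^s$ you used in the divergent case is lost. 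One must prove this factor tends to $1$ uniformly in $\gamma$ as $d(p,\gamma p)\to\infty$; that is precisely the $(1\pm\epsilon)$ estimate with which the paper ends its proof, and it requires the slow-variation property $h(t+a)/h(t)\to 1$ for every $a$, combined with monotonicity of $h$ to obtain uniformity over $\abs{a}\le d(p,\alpha p)$. The condition you actually state, $\log h(t)/t\to 0$, does not imply this: a continuous nondecreasing $h$ that climbs by a factor of $2$ in a short interval before each power of $2$ is subexponential, yet $h(t+a)/h(t)=2$ for arbitrarily large $t$, and with such an $h$ nothing forces the limit measures to satisfy condition \itemref{equivariance}. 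Subexponentiality only guarantees that inserting $h$ leaves the critical exponent equal to $\delta_\Gamma$; the property doing the real work is the slow variation you mention only in passing, and its use in the equivariance computation is the step your proposal omits.
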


If $\family{\mu_p}_{p \in X}$ is a conformal density obtained by Patterson's construction, $\mu_x$ is called a \defn{Patterson-Sullivan measure} on $\bd X$.

\begin{convention}
Throughout this paper, $\mu_x$ will always refer to a Patterson-Sullivan measure on $\bd X$.
\end{convention}

It would be useful to know if $\supp(\mu_p) = \bd X$ (for some, equivalently every, $p \in X$).  If $X$ is a proper rank one CAT($0$) space and $\Gamma$ acts cocompactly, this turns out (\propref{rank one summary 3}) to be equivalent to the existence of a rank one axis in $X$.

\section{Rank of Geodesics in CAT(0) Spaces}
\label{cat0}

We recall some properties of rank one geodesics in CAT($0$) spaces.  We assume some familiarity with CAT($0$) spaces (\cite{ballmann} and \cite{bridson} are good references).  The results in this section are found in the existing literature and generally stated here without proof.  \propref{rank one summary} is not in the literature as stated, but will not surprise the experts.

\begin{standing hypothesis}
In this section, let $\Gamma$ be a group acting by isometries on a proper CAT($0$) space $X$.
\end{standing hypothesis}

For CAT($0$) spaces, the Busemann boundary is canonically homeomorphic to the visual boundary obtained by taking equivalence classes of asymptotic geodesic rays (see \cite{ballmann} or \cite{bridson}).  Thus we will write $\bd X$ for the boundary with this topology, and use either description as convenient.

A \defn{geodesic} in $X$ is an isometric embedding $v \colon \R \to X$.  An isometric embedding of an interval $I \subset \R$ into $X$ is called a \defn{geodesic segment}, and an isometric embedding of $[0, \infty)$ into $X$ a \defn{geodesic ray}.  The space $X$ is \defn{geodesically complete} (or, $X$ has the \defn{geodesic extension property}) if every geodesic segment in $X$ extends to a full geodesic in $X$.

A subspace $F$ of $X$ is called a \defn{flat} if $F$ is isometric to some Euclidean $n$-space $\R^n$.  A subspace $Y \subset X$ isometric to $\R \times [0, \infty)$ is called a \defn{flat half-plane}; note that half-planes are automatically convex.  Call a geodesic in $X$ \defn{rank one} if its image does not bound a flat half-plane in $X$.  If a rank one geodesic $v$ is an axis of an isometry $\gamma \in \Gamma$, we call $v$ a \defn{rank one axis} and $\gamma$ a \defn{rank one isometry}.

Angles are defined as follows:  Let $x \in X$.  For $y, z \in X \setminus \set{x}$, the \defn{comparison angle} $\overline{\angle}_x (y, z)$ at $x$ between $y$ and $z$ is the angle at the corresponding point $\overline{x}$ in the Euclidean comparison triangle $\overline{\triangle}$ for the geodesic triangle $\triangle$ in $X$.  If $v$ and $w$ are geodesic segments in $X$ with $v(0) = w(0) = x$, the \defn{angle} at $x$ between $v$ and $w$ is $\angle_x (v, w) = \lim_{s,t \to 0^+} \overline{\angle}_x (v(s), w(t))$.  For $p, q \in \cl X \setminus \set{x}$, the \defn{angle} at $x$ between $p$ and $q$ is $\angle_x (p, q) = \angle_x (v, w)$, where $v$ and $w$ are geodesic segments with $v(0) = w(0) = x$, $v(d(x,p)) = p$, and $v(d(x,q)) = q$.

For $\xi, \eta \in \bd X$, let $\angle (\xi, \eta) = \sup_{x \in X} \angle_{x} (\xi, \eta)$.  Then $\angle$ defines a complete CAT($1$) metric on $\bd X$; this metric induces a topology on $\bd X$ that is finer (usually strictly finer) than the standard topology.  The \defn{Tits metric}, $d_T$, on $\bd X$ is the path metric induced by $\angle$ (which may take the value $+\infty$).  The \defn{Tits boundary} of $X$ is $\bd X$, equipped with the Tits metric $d_T$.

Since $d_T \ge \angle$ by definition, we have that if $\xi, \eta$ are the endpoints of a geodesic, then $d_T (\xi, \eta) \ge \pi$.  The next two lemmas follow from Theorem II.4.11 in \cite{ballmann}.

\begin{lemma}\label{rank one > pi}
A pair of points $\xi, \eta \in \bd X$ is joined by a rank one geodesic in $X$ if and only if $d_T (\xi, \eta) > \pi$.
\end{lemma}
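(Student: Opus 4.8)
The plan is to prove the two implications separately. Throughout I use the two facts recorded just before the statement: the angle metric satisfies $\angle \le \pi$ pointwise, and the endpoints of any geodesic satisfy $d_T(\xi,\eta) \ge \pi$. The geometric substance will come from Theorem II.4.11 of \cite{ballmann}, which I treat as a black box relating flat half-planes, connecting geodesics, and Tits distance.

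First I isolate an elementary observation. Suppose a geodesic $c$ with endpoints $\xi,\eta$ bounds a flat half-plane $H$, isometric to $\R \times [0,\infty)$. The rays of $H$ issuing from a basepoint on $c$ sweep out directions through an angle of exactly $\pi$, so the boundary at infinity of $H$ is a Tits-geodesic arc of length $\pi$ joining $\xi$ to $\eta$. Hence $d_T(\xi,\eta) \le \pi$, and combined with $d_T(\xi,\eta) \ge \pi$ this forces $d_T(\xi,\eta) = \pi$. Equivalently: a geodesic whose endpoints satisfy $d_T > \pi$ cannot bound a flat half-plane, i.e.\ is automatically rank one.

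For the forward direction, suppose $\xi,\eta$ are joined by a rank one geodesic $c$. Then $d_T(\xi,\eta) \ge \pi$, and it remains only to exclude equality. Here I invoke Theorem II.4.11 of \cite{ballmann} in the form: if the endpoints of a geodesic $c$ satisfy $d_T(\xi,\eta) = \pi$, then a minimizing Tits arc of length $\pi$ from $\xi$ to $\eta$, together with $c$, fills out a flat half-plane bounded by $c$. Since $c$ is rank one this is impossible, so $d_T(\xi,\eta) > \pi$. For the backward direction, suppose $d_T(\xi,\eta) > \pi$; I again apply II.4.11 (using the standing cocompactness and geodesic-completeness hypotheses, which guarantee enough geodesics) to produce a geodesic $c$ joining $\xi$ and $\eta$, and the elementary observation above shows any such $c$ is rank one.

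The main obstacle is entirely contained in the two structural inputs drawn from II.4.11: that $d_T = \pi$ between the endpoints of a geodesic forces a flat half-plane, and dually that $d_T > \pi$ guarantees a connecting geodesic. Everything else is the bookkeeping that a flat half-plane contributes a length-$\pi$ Tits arc between its ideal endpoints, pinning the Tits distance to exactly $\pi$. I would double-check that the cited theorem supplies the connecting-geodesic existence in the stated generality; if it furnishes only the half-plane statement, the existence of a connecting geodesic when $d_T > \pi$ can be recovered separately by a limiting argument over basepoints, using properness of $X$ and semicontinuity of the angle.
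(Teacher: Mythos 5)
Your proposal is correct and takes essentially the same route as the paper: the paper gives no independent argument for this lemma, stating only that it follows from Theorem II.4.11 of Ballmann, which is precisely the black box you invoke, and your elementary glue (the ideal boundary of a flat half-plane is a Tits path of length $\pi$, forcing $d_T(\xi,\eta)=\pi$) is correct since half-planes are convex and hence their boundary arcs embed isometrically in the angle metric. One small caution: when $d_T(\xi,\eta)=\pi$ a minimizing Tits arc need not exist, so the ``fill in the arc'' mechanism you ascribe to II.4.11 is not quite how that theorem reads---but you only use the implication (a geodesic whose endpoints satisfy $d_T=\pi$ bounds a flat half-plane), which is exactly the contrapositive of Ballmann's assertion that a rank one geodesic has $d_T>\pi$, so nothing breaks.
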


\begin{lemma}\label{semicontinuity of Tits metric}
The Tits metric is lower semicontinuous---that is, the Tits metric $d_T \colon \dbX \to [0, \infty]$ is lower semicontinuous with respect to the visual topology on $\bd X$.
\end{lemma}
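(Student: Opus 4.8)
The plan is to establish lower semicontinuity level set by level set, i.e.\ to show that $\set{(\xi,\eta)\in\dbX : d_T(\xi,\eta) > D}$ is open in the visual topology for every $D\ge 0$. Since $\bd X$ is compact and metrizable in the visual topology (as shown above), this is equivalent to the sequential statement $d_T(\xi,\eta)\le\liminf_n d_T(\xi_n,\eta_n)$ whenever $\xi_n\to\xi$ and $\eta_n\to\eta$ visually, and I would work with sequences throughout.

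First I would treat the angular metric $\angle(\xi,\eta)=\sup_{x\in X}\angle_x(\xi,\eta)$ and show it is visually lower semicontinuous. Fix $x\in X$ and let $v_\xi,v_\eta$ be the geodesic rays issuing from $x$ toward $\xi,\eta$. For fixed scales $s,t>0$, the comparison angle $\overline{\angle}_x(v_\xi(s),v_\eta(t))$ depends continuously on $(\xi,\eta)$: rays from the fixed basepoint $x$ vary continuously in the visual topology (uniformly on compact parameter intervals), so the three side lengths $s$, $t$, $d(v_\xi(s),v_\eta(t))$ of the comparison triangle vary continuously, and hence so does the comparison angle. Because the comparison angle is monotone in $s$ and $t$ with $\angle_x(\xi,\eta)=\sup_{s,t>0}\overline{\angle}_x(v_\xi(s),v_\eta(t))$, each $\angle_x$ is a supremum of continuous functions and therefore lower semicontinuous; taking the further supremum over $x\in X$ preserves lower semicontinuity, so $\angle$ is lower semicontinuous. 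This already settles every level $D<\pi$: since $\angle=\min(\pi,d_T)$ by Theorem II.4.11 in \cite{ballmann}, we have $\set{d_T\le D}=\set{\angle\le D}$ for $D<\pi$, which is closed.

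The remaining and genuinely harder regime is $d_T(\xi,\eta)\ge\pi$, where $\angle$ saturates at $\pi$ and carries no further information: lower semicontinuity of $\angle$ yields only $\liminf_n d_T(\xi_n,\eta_n)\ge\pi$, which is weaker than what is needed when $d_T(\xi,\eta)>\pi$. Here I would invoke the length-space structure from Theorem II.4.11 of \cite{ballmann}, namely that $d_T$ is the length metric induced by $\angle$ and that $(\bd X,d_T)$ is a complete CAT($1$) space. Setting $D=\liminf_n d_T(\xi_n,\eta_n)$ and passing to a subsequence realizing it (assuming $D<\infty$, else there is nothing to prove), I would choose $d_T$-rectifiable paths $\sigma_n$ from $\xi_n$ to $\eta_n$ of length at most $D+1/n$, reparametrize them on $[0,1]$ proportionally to arc length, and extract a limiting path from $\xi$ to $\eta$ of $d_T$-length at most $D$, using lower semicontinuity of $d_T$-length under the limit together with visual compactness of $\bd X$.

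The main obstacle is precisely this last extraction. Visual convergence does not control the Tits distance at all—for instance, distinct boundary points of the hyperbolic plane can be visually close yet at infinite Tits distance—so the approximating paths $\sigma_n$ may wander arbitrarily far from $\xi$ and $\eta$ in $d_T$, and one cannot naively confine them to a $d_T$-compact region in order to run an Arzel\`a--Ascoli argument. Overcoming this requires the careful interplay between the two topologies along near-geodesics that is encoded in Theorem II.4.11 of \cite{ballmann}, and this is the technical input I would cite rather than reprove.
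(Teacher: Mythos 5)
Your proposal is correct in outcome, but note what the paper actually does for this lemma: it gives no argument at all. The lemma (together with the preceding one about rank one geodesics and $d_T > \pi$) is simply quoted from Theorem II.4.11 of \cite{ballmann}, which is exactly the theorem you invoke at the end; the section it sits in explicitly states that its results come from the literature without proof. So your closing appeal to that theorem is not ``technical input'' rescuing your path-extraction scheme --- it is, by itself, the paper's entire proof, and it subsumes everything you did before it. What you genuinely add beyond the paper is the first half: the proof that the angle metric $\angle$ is lower semicontinuous, as a supremum of continuous comparison-angle functions, is correct and self-contained, as is the reduction of all sublevel sets with $D < \pi$ via $\angle = \min(\pi, d_T)$. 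One repair there: for a fixed basepoint $x$, the quantity $\sup_{s,t>0} \overline{\angle}_x (v_\xi(s), v_\eta(t))$ is not $\angle_x(\xi,\eta)$ --- by monotonicity of comparison angles that local angle is the \emph{limit as} $s,t \to 0^+$, i.e.\ the infimum --- but rather the full Tits angle $\angle(\xi,\eta)$. This mislabel is harmless and in fact simplifies your argument: the inner supremum already produces $\angle$, so the further supremum over $x$ is unnecessary.

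The gap you flag in the regime $D = \liminf_n d_T(\xi_n,\eta_n) \ge \pi$ is real as you have set it up, but it can be closed with exactly the ingredients you already have, with no citation needed. The fix is to never extract limits in the Tits metric: extract them in the visual topology, where $\bd X$ is compact. Parametrize the near-geodesics $\sigma_n$ (of $\angle$-length at most $D + 1/n$) proportionally to arclength, and by a diagonal argument arrange that $\sigma_n(q) \to \zeta_q$ visually for every dyadic $q \in [0,1]$, with $\zeta_0 = \xi$ and $\zeta_1 = \eta$. Lower semicontinuity of $\angle$ gives $\angle(\zeta_q, \zeta_{q'}) \le \liminf_n \angle(\sigma_n(q), \sigma_n(q')) \le |q - q'| \, D$; when $|q-q'|\,D < \pi$ this upgrades, via $\angle = \min(\pi, d_T)$, to $d_T(\zeta_q, \zeta_{q'}) \le |q - q'| \, D$. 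Summing the triangle inequality along a dyadic chain fine enough that each step satisfies $|q-q'|\,D < \pi$ yields $d_T(\xi,\eta) \le D$. Your worry that the $\sigma_n$ wander far away in $d_T$ never enters, because no Tits-compactness is used anywhere: visual compactness plus lower semicontinuity of $\angle$ on short subsegments does all the work. With that paragraph in place your argument is fully self-contained, which is strictly more than the paper provides.
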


A subspace $Y \subset X$ isometric to $\R \times [0, R]$ is called a \defn{flat strip of width $R$}.  The next lemma is fundamental to understanding rank one geodesics in CAT($0$) spaces.  It implies, in particular, that the endpoint pairs of rank one geodesics form an open set in $\dbX$.

\begin{lemma}[Lemma III.3.1 in \cite{ballmann}]\label{blem}
Let $w \colon \R \to X$ be a geodesic which does not bound a flat strip of width $R > 0$.  Then there are neighborhoods $U$ and $V$ in $\bar X$ of the endpoints of $w$ such that for any $\xi \in U$ and $\eta \in V$, there is a geodesic joining $\xi$ to $\eta$.  For any such geodesic $v$, we have $d(v, w(0)) < R$; in particular, $v$ does not bound a flat strip of width $2R$.
\end{lemma}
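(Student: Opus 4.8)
The plan is to reduce all three assertions to a single ``channeling'' estimate and then read them off. Write $w_- = w(-\infty)$ and $w_+ = w(+\infty)$ for the endpoints of $w$. The estimate I would prove is:

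\smallskip\noindent
$(\ast)$\quad there are neighborhoods $U$ of $w_-$ and $V$ of $w_+$ in $\bar X$ such that every geodesic (segment, ray, or line) $\tau$ whose two ends lie in $U$ and $V$ respectively satisfies $d(\tau, w(0)) < R$.

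\smallskip\noindent
Granting $(\ast)$, existence of a connecting geodesic for ideal points $\xi \in U \cap \bd X$ and $\eta \in V \cap \bd X$ follows by a soft limiting argument: pick interior points $p_k \to \xi$ and $q_k \to \eta$ lying in $U$ and $V$, apply $(\ast)$ to the segments $[p_k,q_k]$ to see each meets the compact ball $B(w(0),R)$, reparametrize so that $\tau_k(0)$ lies in this ball, and extract an Arzel\`a--Ascoli limit; since the endpoints escape to infinity, the limit is a bi-infinite geodesic from $\xi$ to $\eta$. (For $\xi$ or $\eta$ interior the connecting segment or ray exists trivially.) The bound $d(v,w(0)) < R$ for an \emph{arbitrary} connecting geodesic $v$ is then immediate from $(\ast)$, as the ends of $v$ are $\xi \in U$ and $\eta \in V$.

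I would prove $(\ast)$ by contradiction. If it fails, then for neighborhoods shrinking to $w_\mp$ there are geodesics $\tau_n$ with ends $a_n \to w_-$, $b_n \to w_+$ and $d(\tau_n, w(0)) \ge R$. Let $m_n \in \tau_n$ realize $\rho_n := d(m_n, w(0)) = d(\tau_n, w(0)) \ge R$, and reparametrize so $\tau_n(0) = m_n$. Suppose first that $\rho_n$ stays bounded. Then Arzel\`a--Ascoli gives a subsequential limit $v_\infty$, which (by convergence of endpoints in the visual topology) is a geodesic line from $w_-$ to $w_+$; passing to the limit in $d(w(0), \tau_n(s)) \ge R$ shows $d(w(0), v_\infty(s)) \ge R$ for all $s$. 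Now $v_\infty$ and $w$ are two geodesic lines with the \emph{same} pair of endpoints, hence parallel (the distance between them is convex and bounded, so constant), and by the Flat Strip Theorem (see \cite{bridson}) they bound a flat strip with $w = \R \times \{0\}$ and $v_\infty = \R \times \{D\}$. A one-line computation in this strip gives $D = d(v_\infty, w(0)) \ge R$, so $w$ bounds a flat strip of width $R$ (restrict to $\R \times [0,R]$), contradicting the hypothesis.

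The delicate point --- and the step I expect to be the main obstacle --- is ruling out $\rho_n \to \infty$, i.e. that the connecting geodesics drift off to infinity and leave no limiting line in a bounded region. This cannot be excluded by comparison geometry alone: in $X = \R^2$ with $w$ a coordinate axis the $\rho_n$ are genuinely unbounded, which is consistent since there $w$ bounds flat strips of every width. Thus this is exactly the place where the hypothesis that $w$ bounds no flat strip of width $R$ must be invoked. The plan here is to show that if $\rho_n \to \infty$ one still produces a flat strip of width $\to \infty$ bounded by $w$: since $a_n, b_n$ approach $w_\mp$, the two rays from $w(0)$ to $a_n, b_n$ converge to the opposite rays of $w$ (so the angle at $w(0)$ tends to $\pi$), and feeding this near-straightness into the convexity of $t \mapsto d(w(t), \tau_n)$ together with the Busemann functions of $w_\pm$ should force the region between $w$ and $\tau_n$ to flatten. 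Everything else in the argument is soft; this flattening estimate is the real technical work.

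Finally, the clause that $v$ bounds no flat strip of width $2R$ falls out cleanly from $(\ast)$. Suppose $v$ did bound a flat strip $F \cong \R \times [0,2R]$ with $v = \R \times \{0\}$, and let $u = \R \times \{2R\}$ be the opposite boundary line. Being parallel to $v$, the line $u$ has the same endpoints $\xi \in U$ and $\eta \in V$, so $(\ast)$ applies to $u$ as well, yielding points $v(a)$ and $u(b)$ with $d(v(a), w(0)) < R$ and $d(u(b), w(0)) < R$. The triangle inequality then gives $d(v(a), u(b)) < 2R$, whereas $v(a)$ and $u(b)$ lie on opposite boundary lines of the width-$2R$ strip and hence satisfy $d(v(a), u(b)) \ge 2R$ --- a contradiction.
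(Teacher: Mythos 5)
First, a point of comparison: the paper does not prove this lemma at all---it is quoted verbatim as Lemma III.3.1 of \cite{ballmann}---so your proposal has to be measured against Ballmann's argument, not against anything in this paper. Measured that way, your reduction to the channeling estimate $(\ast)$, the Arzel\`a--Ascoli argument deducing existence of connecting geodesics from $(\ast)$, the bounded case of $(\ast)$ via the Flat Strip Theorem, and the derivation of the width-$2R$ clause are all correct. But the proposal has a genuine gap, and it sits exactly where you flag it: the case $\rho_n \to \infty$ is not a technical remainder to be deferred---it \emph{is} the lemma. Every step you actually carry out is soft; the one place where the hypothesis ``$w$ bounds no flat strip of width $R$'' must be converted into geometry is the escape case, and there you offer a hope (``should force the region between $w$ and $\tau_n$ to flatten'') rather than an argument.

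To see why the sketch does not yet constitute an argument: the two ingredients you name---that the angle at $w(0)$ between $[w(0),a_n]$ and $[w(0),b_n]$ tends to $\pi$ (true, by lower semicontinuity of angles), and that $t \mapsto d(w(t),\tau_n)$ is convex---hold verbatim in $\R^2$, where escape genuinely occurs; so no combination of them alone can produce a contradiction. What is needed is a construction that extracts a limiting flat object \emph{near} $w$ from data that runs off to infinity. For instance, one can take limits of the segments $[w(s), \pi_n(w(s))]$, where $\pi_n$ is nearest-point projection to $\tau_n$; in the escape case their lengths blow up, and convexity of the metric yields, for each $s$, a ray $r_s$ issuing from $w(s)$ with $d(r_s(u), r_{s'}(u)) \le |s-s'|$ for all $u$. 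But a flat half-plane or strip bounded by $w$ requires the \emph{reverse} inequality as well, i.e.\ that these rays do not contract toward one another---and $1$-Lipschitz families of mutually asymptotic rays in a CAT($0$) space can certainly contract (rays from points of a geodesic in $\H^2$ asymptotic to a common ideal point do exactly this). Ruling that out forces you to use where the rays came from, namely the projection structure onto the escaping geodesics $\tau_n$ together with the convergence of their ends to $w(\pm\infty)$, and this is precisely the technical content of Ballmann's proof. Until that estimate is supplied, the proposal is an outline whose hardest step is missing, and both the bounded case and the final clause---which you do prove---hang on it.
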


Now we turn to Chen and Eberlein's duality condition from \cite{chen-eberlein}.  It is based on $\Gamma$-duality of pairs of points in $\bd X$, introduced by Eberlein in \cite{eb72}.

For any geodesic $v \colon \R \to X$, denote $v^+ = \lim_{t \to +\infty} v(t)$ and $v^- = \lim_{t \to -\infty} v(t)$.

\begin{definition}
Two points $\xi, \eta \in \bd X$ are called \defn{$\Gamma$-dual} if there exists a sequence $(\gamma_n)$ in $\Gamma$ such that $\gamma_n x \to \xi$ and $\gamma_n^{-1} x \to \eta$ for some (hence any) $x \in X$.  We say \defn{Chen and Eberlein's duality condition holds} on $\bd X$ if $v^+$ and $v^-$ are $\Gamma$-dual for every geodesic $v \colon R \to X$.
\end{definition}

\begin{lemma}[Lemma III.3.3 in \cite{ballmann}]\label{ping-pong}
Let $\gamma$ be an isometry of $X$, and suppose $w \colon \R \to X$ is an axis for $\gamma$, where $w$ is a geodesic which does not bound a flat half-plane.  Then
\begin{enumerate}
\item
For any neighborhood $U$ of $w^-$ and any neighborhood $V$ of $w^+$ in $\overline X$ there exists $n > 0$ such that
\[\gamma^{k} (\overline X \setminus U) \subset V
\quad
\text{and}
\quad
\gamma^{-k} (\overline X \setminus V) \subset U
\quad
\text{for all }
k \ge n.\]
\item
For any $\xi \in \bd X \setminus \set{w^+}$, there is a geodesic $w_{\xi}$ from $\xi$ to $w^+$, and any such geodesic is rank one.  Moreover, for $K \subset \bd X \setminus \set{w^+}$ compact, the set of these geodesics is compact (modulo parametrization).
\end{enumerate}
\end{lemma}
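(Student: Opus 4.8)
The plan is to establish the north--south dynamics of (1) first, and then deduce (2) from (1) together with the Tits-geometric \lemref{rank one > pi} and \lemref{semicontinuity of Tits metric}. Throughout write $c = w(0)$ and let $\ell > 0$ be the translation length of $\gamma$, so that $\gamma^k w(t) = w(t + k\ell)$; in particular $w(k\ell) \to w^+$ and $w(-k\ell) \to w^-$. Since $w$ is rank one it bounds no flat half-plane, and a standard Arzel\`a--Ascoli argument in the proper space $X$ shows that a geodesic bounding flat strips of \emph{every} width would bound a flat half-plane; hence there is some $R > 0$ for which $w$ bounds no flat strip of width $R$. Applying \lemref{blem} produces neighborhoods $U_0 \ni w^-$ and $V_0 \ni w^+$ in $\overline X$ such that any $\xi \in U_0$ and $\eta \in V_0$ are joined by a (rank one) geodesic, and \emph{every} such joining geodesic meets $B(c, R)$.

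For (1), it suffices by symmetry (replace $\gamma$ by $\gamma^{-1}$, which interchanges $w^+$ and $w^-$) to show that $\gamma^k(\overline X \setminus U) \subseteq V$ for all large $k$, for arbitrary neighborhoods $U \ni w^-$ and $V \ni w^+$. Using compactness of $\overline X$, this reduces to the single claim
\[
(\star)\qquad \eta_n \to \eta \neq w^- \ \text{and}\ k_n \to +\infty \implies \gamma^{k_n}\eta_n \to w^+ ;
\]
indeed, if (1) failed we could extract $k_n \to \infty$ and $\eta_n \to \eta \in \overline X \setminus U$ (so $\eta \neq w^-$) with $\gamma^{k_n}\eta_n \to \zeta \in \overline X \setminus V$ (so $\zeta \neq w^+$), contradicting $(\star)$. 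To prove $(\star)$ one pulls back by $\gamma^{-k_n}$: the geodesic from $c$ to $\gamma^{k_n}\eta_n$ is the $\gamma^{k_n}$-image of the geodesic from $w(-k_n\ell)$ to $\eta_n$. As $n \to \infty$ the source $w(-k_n\ell)$ runs out to $w^-$ while $\eta_n$ stays away from $w^-$, and the contraction along the rank one axis encoded in \lemref{blem} forces these geodesics to track the axis past $c$ in the $w^+$ direction over segments of growing length; translating back by $\gamma^{k_n}$ then drives the far endpoints into every neighborhood of $w^+$, giving $\zeta = w^+$.

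For (2), fix $\xi \in \bd X \setminus \set{w^+}$. By (1), $\gamma^{-k}\xi \to w^-$, so $\gamma^{-k}\xi \in U_0$ for large $k$; since $w^+ \in V_0$ is fixed by $\gamma$, \lemref{blem} yields a geodesic from $\gamma^{-k}\xi$ to $w^+$, and applying $\gamma^{k}$ gives a geodesic $w_\xi$ from $\xi$ to $w^+$. For the rank one assertion, recall that $\gamma$ acts on $\bd X$ as an isometry of the Tits boundary fixing $w^\pm$, so $d_T(\gamma^{-k}\xi, w^+) = d_T(\xi, w^+)$ for all $k$; since $\gamma^{-k}\xi \to w^-$, lower semicontinuity of $d_T$ (\lemref{semicontinuity of Tits metric}) gives $d_T(w^-, w^+) \le \liminf_{k} d_T(\gamma^{-k}\xi, w^+) = d_T(\xi, w^+)$, and $d_T(w^-, w^+) > \pi$ because $w$ is rank one (\lemref{rank one > pi}). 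As the endpoints of a geodesic bounding a flat half-plane are at Tits distance exactly $\pi$, the strict inequality $d_T(\xi, w^+) > \pi$ shows that \emph{every} geodesic from $\xi$ to $w^+$ is rank one. Finally, for compact $K \subseteq \bd X \setminus \set{w^+}$, choose a neighborhood $V'$ of $w^+$ disjoint from $K$; by (1) there is a uniform $k_0$ with $\gamma^{-k_0}K \subseteq U_0$, so for any geodesic $v$ from $\xi \in K$ to $w^+$ the translate $\gamma^{-k_0}v$ meets $B(c,R)$ (by \lemref{blem}), whence $v$ meets the \emph{fixed} ball $B(w(k_0\ell), R)$. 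Normalizing each such geodesic to pass through this ball at time $0$, Arzel\`a--Ascoli gives compactness modulo parametrization, any limit being again a geodesic from a point of $K$ to $w^+$.

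The main obstacle is the claim $(\star)$: turning the qualitative contraction along a rank one axis supplied by \lemref{blem} into the quantitative statement that geodesics issuing from points deep along $w^-$ toward any $\eta \neq w^-$ must pass near $c$ and thereafter fellow-travel the $w^+$ ray, so that translation by $\gamma^{k_n}$ sends their far endpoints uniformly into neighborhoods of $w^+$. Care is needed here to avoid circularity, since the naive reduction of $(\star)$ to its own $\gamma^{-1}$-analogue is vacuous; the argument must instead extract the estimate directly from the local product structure of \lemref{blem}. Once $(\star)$ is in hand, the rest of (1) is pure compactness, and (2) is bookkeeping with the Tits metric together with further applications of \lemref{blem}.
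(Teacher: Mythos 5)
First, a point of reference: the paper offers no proof of this lemma at all---it is quoted, with attribution, as Lemma III.3.3 of \cite{ballmann}, in a section whose results are stated as coming from the existing literature without proof. So your attempt has to be judged as a self-contained argument, and as such it has a genuine gap, located exactly at the claim you label $(\star)$.

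The reduction of part (1) to $(\star)$ by compactness of $\overline X$ is correct, and your derivation of part (2) from part (1) is sound and rather nice: existence of $w_\xi$ via $\gamma^{-k}\xi \in U_0$ and \lemref{blem}; the rank one property via $\gamma$-invariance of the Tits metric, \lemref{semicontinuity of Tits metric}, and \lemref{rank one > pi} (giving $d_T(\xi, w^+) \ge d_T(w^-, w^+) > \pi$); and the compactness statement via trapping all the geodesics in the fixed ball $B(w(k_0\ell), R)$ before applying Arzel\`a--Ascoli. But $(\star)$ itself is never proved. The sentence asserting that ``the contraction along the rank one axis encoded in \lemref{blem}'' forces the pulled-back geodesics to track the axis is not an application of \lemref{blem}: that lemma only controls geodesics \emph{both} of whose endpoints lie in the prescribed neighborhoods $U_0 \ni w^-$ and $V_0 \ni w^+$. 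The geodesics arising in $(\star)$ run from $w(-k_n\ell)$ (which is indeed eventually in $U_0$) to points $\eta_n$ about which you know only that they avoid a fixed neighborhood of $w^-$; nothing places them anywhere near $w^+$, and assuming their far endpoints fall into $V_0$ after translation is precisely the conclusion of $(\star)$---the circularity you yourself flag in your closing paragraph. What is missing is a genuinely new estimate, for instance a sublemma of the form: for every compact $K \subseteq \overline X \setminus \set{w^-}$ there exist $T, D > 0$ such that every geodesic from a point $w(-t)$, $t \ge T$, to a point of $K$ meets $\overline B(w(0), D)$. Proving such a statement requires the same kind of argument that underlies \lemref{blem} itself (convexity of the distance to the image of $w$, Arzel\`a--Ascoli limits, and the production of a flat strip or half-plane bounded by $w$ when the estimate fails), not the statement of \lemref{blem}. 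Since you explicitly defer this ``main obstacle'' rather than resolve it, part (1)---and with it the portions of part (2) that depend on it---is unsupported; this is the dynamical heart of Ballmann's lemma, and it is absent from the proposal.
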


The next proposition summarizes the situation for rank one CAT($0$) spaces (cf.~ \propref{rank one summary 2} and \propref{rank one summary 3}).

\begin{prop}\label{rank one summary}
Let $\Gamma$ be a group acting properly discontinuously, cocompactly, and isometrically on a proper, geodesically complete \textnormal{CAT($0$)} space $X$.  Suppose $X$ contains a rank one geodesic, and that $\abs{\bd X} > 2$.  The following are equivalent:
\begin{enumerate}
\item \label{rank1axis}  $X$ has a rank one axis.
\item \label{total duality}  Every pair $\xi, \eta \in \bd X$ is $\Gamma$-dual.
\item \label{duality}
Chen and Eberlein's duality condition holds on $\bd X$.
\item \label{minimality}  $\Gamma$ acts minimally on $\bd X$ (that is, every $p \in \bd X$ has dense $\Gamma$-orbit).
\item \label{infinite-diameter}  Some $\xi \in \bd X$ has infinite Tits distance to every other $\eta \in \bd X$.
\item \label{3pi/2-diameter}  $\bd X$ has Tits diameter $\ge \frac{3\pi}{2}$.
\end{enumerate}
\end{prop}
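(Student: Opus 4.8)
The plan is to treat condition \itemref{rank1axis} (existence of a rank one axis) as the hub: I will show that a rank one axis implies each of the other five conditions, and that each of the other conditions in turn produces a rank one axis. Cocompactness does much of the preliminary work for free. First I would record that cocompactness forces $\Lambda(\Gamma) = \bd X$: tracking an orbit along a geodesic ray toward any $\xi \in \bd X$ and using boundedness of the quotient shows every boundary point is a limit point. I would also note that each $\gamma \in \Gamma$ acts on $\bd X$ as an isometry for the Tits metric, since isometries of $X$ preserve angles; this will be used repeatedly.

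For the forward implications from \itemref{rank1axis}, fix a rank one axis $w$ of some $\gamma_0 \in \Gamma$, with endpoints $w^\pm$. The north--south dynamics of \lemref{ping-pong} immediately give that $w^+$ and $w^-$ are $\Gamma$-dual and that every orbit accumulates at $w^+$; combined with $\Lambda(\Gamma) = \bd X$ and the density of the attracting fixed points of the conjugates $\alpha \gamma_0 \alpha^{-1}$ in the limit set, this yields minimality \itemref{minimality}. Granting minimality, I would prove total duality \itemref{total duality} by a ping-pong construction: given $\xi, \eta \in \bd X$, choose $\alpha, \beta \in \Gamma$ with $\alpha w^+$ near $\xi$ and $\beta w^-$ near $\eta$ (possible by minimality) and check that $\delta_n = \alpha \gamma_0^n \beta^{-1}$ satisfies $\delta_n x \to \alpha w^+$ and $\delta_n^{-1} x \to \beta w^-$, refining to the exact pair by a diagonal argument. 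Condition \itemref{duality} is then the special case of \itemref{total duality} applied to the endpoints of each geodesic. For the Tits conditions I would show that $w^+$ lies at infinite Tits distance from every other boundary point \itemref{infinite-diameter}: if some $\eta \neq w^+$ had $d_T(w^+, \eta) < \infty$, then $\gamma_0$-invariance of $d_T$ together with the visual convergence $\gamma_0^n \eta \to w^+$ and a limit of the bounded-length Tits geodesics would produce a flat half-plane bounded by $w$, contradicting that $w$ is rank one; this is the local heart of the argument and is closely related to \thmref{main isolated}. Condition \itemref{3pi/2-diameter} is then immediate, since $\abs{\bd X} > 2$ supplies a second point and the diameter is infinite.

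For the return implications I would feed each condition into an existence-of-axis result. Since \itemref{total duality} $\Rightarrow$ \itemref{duality} and \itemref{infinite-diameter} $\Rightarrow$ \itemref{3pi/2-diameter} are trivial, it suffices to recover \itemref{rank1axis} from \itemref{duality}, from \itemref{minimality}, and from \itemref{3pi/2-diameter}. From the duality condition \itemref{duality}, the given rank one geodesic closes up to a rank one axis by Ballmann's rank one closing lemma (\cite{ballmann}, building on \cite{chen-eberlein}); the same conclusion follows from minimality \itemref{minimality} by using \lemref{blem} to make the approximate north--south configuration exact and then extracting a translating isometry. The implication \itemref{3pi/2-diameter} $\Rightarrow$ \itemref{rank1axis} is the deepest: a Tits diameter $\ge \frac{3\pi}{2} > \pi$ furnishes a rank one geodesic via \lemref{rank one > pi}, and the sharp threshold $\frac{3\pi}{2}$ is exactly the Ballmann--Buyalo gap guaranteeing that such a cocompact space admits a periodic rank one geodesic, i.e.\ a rank one axis (\cite{bb}).

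I expect this last step---closing the gap between the purely metric datum ``Tits diameter $\ge \frac{3\pi}{2}$'' and the dynamical datum ``rank one axis''---to be the main obstacle, since it is precisely where the sharp constant enters and where one must invoke (or reprove) the Ballmann--Buyalo theorem rather than the elementary ping-pong and flat-strip lemmas available in this section. The second most delicate point is the infinite-Tits-distance claim \itemref{infinite-diameter}, where ruling out a finite Tits geodesic emanating from $w^+$ requires upgrading a visual limit into an honest flat half-plane.
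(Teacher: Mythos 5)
Your hub architecture is logically complete, and two of its pieces are sound: the explicit ping-pong $\delta_n = \alpha\gamma_0^n\beta^{-1}$, combined with the observation that the set of $\Gamma$-dual pairs is closed in $\dbX$, does yield total duality \emph{given} both an axis and minimality; and deducing (1) from the duality condition (3) via Ballmann's closing theorem is exactly what the paper does. The first genuine gap is in (1) $\Rightarrow$ (4). The ``density of the attracting fixed points of the conjugates $\alpha\gamma_0\alpha^{-1}$ in the limit set'' is not an ingredient you may simply combine with north--south dynamics: those attracting fixed points are precisely the orbit $\Gamma w^+$, so this density claim \emph{is} the nontrivial content of minimality. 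North--south dynamics give only the complementary statement that $w^+$ lies in every orbit closure, and $\Lambda(\Gamma) = \bd X$ does not upgrade this by soft arguments: if $\gamma_n x \to \eta$, nothing forces $\gamma_n w^+ \to \eta$ without uniform ($\pi$-convergence-type) control of $\gamma_n$ on all of $\bd X$. This density statement is exactly what the paper imports as Corollary 2.1 of Ballmann--Buyalo \cite{bb}; as written, your argument assumes it without proof or citation.

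The second gap is the return implication (4) $\Rightarrow$ (1), which as sketched would fail. Minimality concerns orbit closures of individual boundary points, whereas extracting a translating isometry requires a uniform configuration: some $\gamma \in \Gamma$ with $\gamma(\overline X \setminus U) \subset V$ for the neighborhoods $U, V$ of \lemref{blem}. Dense orbits move single points into $V$; they give no control over the image of the entire complement of $U$, and \lemref{blem} cannot create that control. (You also cannot substitute your own duality construction here, since it presupposes the axis---circular in this direction.) The paper avoids any direct proof of (4) $\Rightarrow$ (1): it cites Corollary 1.6 of \cite{bb} for (4) $\Rightarrow$ (2) and then runs (2) $\Rightarrow$ (3) $\Rightarrow$ (1) through \cite{ballmann}. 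A smaller but real issue is (6) $\Rightarrow$ (1): Ballmann--Buyalo prove existence of a rank one axis under the strict inequality $\diam_T \bd X > 3\pi/2$, while condition (6) is non-strict; the equality case is exactly what Guralnik--Swenson \cite{sg} supply, and that is the reference the paper uses. Your sketch of (1) $\Rightarrow$ (5), by contrast, can be made to work---it is \lemref{rec => Tits-isolated} specialized to a periodic geodesic, after first using the path-metric property of $d_T$ to pass from finite Tits distance to Tits distance less than $\pi$---but the paper's route is shorter: \lemref{ping-pong}(2) and \lemref{rank one > pi} give $d_T(\eta, w^+) > \pi$ for every $\eta \neq w^+$, which for a length metric forces $d_T(\eta, w^+) = \infty$.
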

\begin{proof}
\itemrefstar{duality}%
$\implies$%
\itemrefstar{minimality} and
\itemrefstar{duality}%
$\implies$%
\itemrefstar{rank1axis} are shown in Ballmann (Theorems III.2.4 and III.3.4, respectively, of \cite{ballmann}).
\itemrefstar{rank1axis}%
$\implies$%
\itemrefstar{infinite-diameter} is an easy exercise using \lemref{ping-pong}(2), while
\itemrefstar{infinite-diameter}%
$\implies$%
\itemrefstar{3pi/2-diameter}
and
\itemrefstar{total duality}%
$\implies$%
\itemrefstar{duality}
are trivial.
\itemrefstar{3pi/2-diameter}%
$\implies$%
\itemrefstar{rank1axis} is shown (with slightly better bounds for any fixed dimension) in Guralnik and Swenson (\cite{sg}).
\itemrefstar{minimality}%
$\implies$%
\itemrefstar{total duality} follows immediately from Corollary 1.6 of Ballmann and Buyalo (\cite{bb}).

It remains to prove
\itemrefstar{rank1axis}%
$\implies$%
\itemrefstar{minimality}.  Let $p, q$ be the endpoints of a rank one axis, and let $M$ be a minimal nonempty closed $\Gamma$-invariant subset of $\bd X$.  By \lemref{ping-pong}(1), both $p, q$ must lie in $M$; thus $M$ is the only minimal set.  By Corollary 2.1 of Ballmann and Buyalo (\cite{bb}), the orbit of $p$ is dense in the boundary.  Since $p \in M$, this means the $\Gamma$-action is minimal on the boundary.
\end{proof}

A well-known conjecture of Ballmann and Buyalo (\cite{bb}) is that, given the hypotheses of \propref{rank one summary}, all the equivalent conditions in the conclusion hold.

\section{Patterson-Sullivan Measures on CAT($0$) Boundaries}

We make a few observations about Patterson-Sullivan measures for CAT($0$) spaces.

\begin{standing hypothesis}
In this section, let $\Gamma$ be an infinite group acting properly by isometries on a proper CAT($0$) space $X$.
\end{standing hypothesis}

\begin{definition}
Define the \defn{$r$-shadow of $y$ from $x$} to be
\[\shadow_r (x, y) = \setp{\xi \in \bd X}{[x, \xi) \cap B(y, r) \neq \varnothing},\]
where $[x, \xi)$ is the image of the geodesic ray from $x$ to $\xi$.
\end{definition}

The proof of the next lemma is the same as for CAT($-1$) spaces (see \cite{roblin}).  %

\begin{lemma}[Sullivan]\label{shadow lemma}
For every $r > 0$, there is some $C_r > 0$ such that
\[\mu_x (\shadow_r (x, \gamma x)) \le C_r e^{- \delta_\Gamma d(x, \gamma x)}\]
for all $x \in X$ and $\gamma \in \Gamma$.
\end{lemma}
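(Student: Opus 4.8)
The plan is to combine the two defining properties of a conformal density from \defref{conformal density}---equivariance and the Radon--Nikodym relation---with the elementary estimate of \lemref{shadow bounds}, using cocompactness to make the constant uniform in $x$.

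First I would use cocompactness to reduce to $x$ ranging over a compact set. Fix a basepoint $x_0 \in X$ and a compact $K \subseteq X$ with $\Gamma K = X$; recall that the measures in a conformal density are finite, so $\norm{\mu_{x_0}} < \infty$. Given arbitrary $x \in X$ and $\gamma \in \Gamma$, pick $\alpha \in \Gamma$ with $y := \alpha^{-1} x \in K$, and set $\beta := \alpha^{-1}\gamma\alpha \in \Gamma$. Since $\alpha$ is an isometry it carries geodesic rays to geodesic rays and balls to balls, so $\alpha^{-1}\shadow_r(x, \gamma x) = \shadow_r(y, \beta y)$; combining this with $\mu_x = \alpha_*\mu_y$ (equivariance) and the $\Gamma$-invariance of distance gives
\[
\mu_x(\shadow_r(x, \gamma x)) = \mu_y(\shadow_r(y, \beta y)), \qquad d(x, \gamma x) = d(y, \beta y).
\]
Thus it suffices to prove the desired bound for points $y \in K$.

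Next I would change the measure on the shadow from $\mu_y$ to $\mu_{\beta y}$ by the Radon--Nikodym relation, $d\mu_y(\xi) = e^{-\delta_\Gamma b_\xi(y, \beta y)}\,d\mu_{\beta y}(\xi)$, and control the exponent with \lemref{shadow bounds}: for $\xi \in \shadow_r(y, \beta y)$ we have $b_\xi(y, \beta y) \ge d(y, \beta y) - r$, so the density is at most $e^{\delta_\Gamma r} e^{-\delta_\Gamma d(y, \beta y)}$. Bounding the remaining integral by total mass yields
\[
\mu_y(\shadow_r(y, \beta y)) \le e^{\delta_\Gamma r}\, e^{-\delta_\Gamma d(y, \beta y)}\, \mu_{\beta y}(\bd X).
\]
Finally I would bound $\mu_{\beta y}(\bd X)$ uniformly: pushforward preserves total mass, so $\mu_{\beta y}(\bd X) = \norm{\beta_*\mu_y} = \norm{\mu_y}$, and integrating the Radon--Nikodym derivative against $\mu_{x_0}$ gives $\norm{\mu_y} = \int_{\bd X} e^{-\delta_\Gamma b_\xi(y, x_0)}\,d\mu_{x_0}(\xi) \le e^{\delta_\Gamma d(y, x_0)}\,\norm{\mu_{x_0}} \le e^{\delta_\Gamma D}\,\norm{\mu_{x_0}}$, where $D := \sup_{y \in K} d(y, x_0) < \infty$. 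Setting $C_r := e^{\delta_\Gamma(r + D)}\norm{\mu_{x_0}}$ and recalling $d(y, \beta y) = d(x, \gamma x)$ then closes the argument.

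The estimates themselves are routine once \lemref{shadow bounds} is in hand; the one point that genuinely needs care---and that is easy to overlook if the lemma is stated only for a fixed basepoint---is the uniformity. The total mass $\norm{\mu_y}$ is \emph{not} bounded as $y$ ranges over all of $X$ (it grows like $e^{\delta_\Gamma d(y, x_0)}$), so cocompactness is essential to absorb it into a single constant $C_r$. Getting the equivariant bookkeeping right---conjugating $\gamma$ to $\beta = \alpha^{-1}\gamma\alpha$ so that both the shadow measure and the relevant displacement are preserved under the reduction to $K$---is therefore the main thing to verify carefully.
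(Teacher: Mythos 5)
Your argument is correct, and its core is the same as the paper's: move the basepoint by equivariance, rewrite the measure with the Radon--Nikodym relation from \defref{conformal density}, bound the resulting density on the shadow via \lemref{shadow bounds}, and finish with a total-mass bound. The difference is in how uniformity in $x$ is handled. The paper applies equivariance directly with $\gamma$ itself, getting
\[
\mu_x (\shadow_r (x, \gamma x)) = \mu_{\gamma^{-1} x} (\shadow_r (\gamma^{-1} x, x))
= \int_{\shadow_r (\gamma^{-1} x, x)} e^{-\delta_\Gamma b_{\xi} (\gamma^{-1} x, x)} \, d\mu_x (\xi)
\le e^{\delta_\Gamma (r - d(x, \gamma x))} \, \mu_x (\bd X),
\]
and concludes with $C_r = \mu_x(\bd X)\, e^{\delta_\Gamma r}$ --- a constant that, as written, still depends on $x$ through the total mass $\mu_x(\bd X)$. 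Your conjugation of the whole configuration into the compact set $K$ (replacing $(x, \gamma)$ by $(y, \beta)$ with $\beta = \alpha^{-1}\gamma\alpha$) supplies exactly the argument needed to remove that dependence, so on the one point genuinely requiring care your write-up is more complete than the paper's.

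One correction to your closing remark, though: it is \emph{not} true that $\norm{\mu_y}$ grows like $e^{\delta_\Gamma d(y, x_0)}$ as $y$ ranges over $X$; that exponential is only the a priori Radon--Nikodym bound. Since equivariance preserves total mass (as you yourself use when writing $\norm{\beta_* \mu_y} = \norm{\mu_y}$), the function $y \mapsto \mu_y(\bd X)$ is $\Gamma$-invariant, and your own reduction to $K$ then shows it is bounded on \emph{all} of $X$, not merely on $K$. The accurate version of your point is that this boundedness is invisible without invoking the $\Gamma$-action and cocompactness --- which is precisely the step your proof makes explicit and the paper's leaves implicit.
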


\begin{prop}\label{flats}
If $\Gamma$ acts cocompactly on $X$ and $\delta_\Gamma > 0$, then $\mu_x (\bd F) = 0$ for any flat $F \subset X$.
\end{prop}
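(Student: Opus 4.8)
The plan is to cover the boundary sphere $\bd F$ by finitely many shadows of orbit points sitting on a large metric sphere inside $F$, to bound each shadow with \lemref{shadow lemma}, and to let the radius of the sphere go to infinity. First I would reduce to a convenient basepoint: since the measures in the conformal density $\set{\mu_p}_{p\in X}$ are mutually equivalent by \itemref{Radon-Nikodym} of \defref{conformal density}, it suffices to show $\mu_o(\bd F) = 0$ for one basepoint, and I would take $o \in F$. I may assume $n = \dim F \ge 1$, since otherwise $\bd F = \varnothing$. Because $F$ is a complete convex subspace containing $o$, the geodesic ray $[o,\xi)$ for any $\xi \in \bd F$ lies entirely in $F$ (by uniqueness of rays from a point in a CAT($0$) space), so it meets the Euclidean sphere $S_t = \setp{z \in F}{d(o,z) = t}$ in a single point for each $t > 0$.

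For the covering, I would use cocompactness to fix $R_0 > 0$ with $\Gamma o$ being $R_0$-dense in $X$. For fixed $t$, choose an $R_0$-net $\set{z_1,\dots,z_{N_t}}$ of $S_t$; since $S_t$ is an $(n-1)$-dimensional Euclidean sphere of radius $t$, one can do this with $N_t \le C_0\, t^{n-1}$ for a constant $C_0$ independent of $t$. Pick $\gamma_i \in \Gamma$ with $d(\gamma_i o, z_i) \le R_0$, so that $d(o, \gamma_i o) \ge t - R_0$. Given $\xi \in \bd F$, the ray $[o,\xi)$ crosses $S_t$ at a point within $R_0$ of some $z_i$, hence within $2R_0$ of $\gamma_i o$, which shows $\xi \in \shadow_{2R_0}(o, \gamma_i o)$. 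Thus $\bd F \subseteq \bigcup_{i=1}^{N_t} \shadow_{2R_0}(o, \gamma_i o)$.

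Combining these with \lemref{shadow lemma} (for $r = 2R_0$ and basepoint $o$) would give
\[ \mu_o(\bd F) \le \sum_{i=1}^{N_t} \mu_o\paren{\shadow_{2R_0}(o, \gamma_i o)} \le N_t\, C_{2R_0}\, e^{-\delta_\Gamma(t - R_0)} \le C_0 C_{2R_0} e^{\delta_\Gamma R_0}\, t^{n-1} e^{-\delta_\Gamma t}. \]
Since this holds for every $t > 0$ and $\delta_\Gamma > 0$, letting $t \to \infty$ drives the right-hand side to $0$, giving $\mu_o(\bd F) = 0$. I expect the main obstacle to be the covering step — verifying that shadows of a coarsely dense family of orbit points on $S_t$, taken with a single uniform radius $2R_0$, really do cover $\bd F$, together with the bookkeeping of the covering number $N_t$. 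The conceptual heart, and the only place the hypothesis $\delta_\Gamma > 0$ is used, is the resulting competition between the polynomial growth $t^{n-1}$ of the net (the flat directions carry no exponential growth) and the exponential decay $e^{-\delta_\Gamma t}$ of each shadow's measure; positivity of $\delta_\Gamma$ is exactly what makes the exponential decay win.
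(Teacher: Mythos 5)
Your proposal is correct and follows essentially the same argument as the paper: cover the metric sphere of radius $t$ in the flat by boundedly many balls around orbit points (polynomially many in $t$, since the flat is Euclidean), bound the corresponding shadows via \lemref{shadow lemma}, and let $t \to \infty$ so that the exponential decay $e^{-\delta_\Gamma t}$ beats the polynomial covering number. The only differences are cosmetic — you make explicit the step that rays $[o,\xi)$ for $\xi \in \bd F$ stay in $F$ and cross the sphere inside a shadow, which the paper leaves implicit, and you track the constant $e^{\delta_\Gamma R_0}$ slightly more carefully.
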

\begin{proof}
Let $F \subset X$ be a flat.  Fix $x \in X$; we may assume $x \in F$.  By cocompactness of the $\Gamma$-action, there is some $R > 0$ such that $\Gamma B(x, R) = X$.  Now the spheres
\[S_F (x, r) = \setp{y \in F}{d(y, x) = r}\]
in $F$ based at $x$ may be covered by at most $p(r)$ $R$-balls in $F$, for some polynomial function $p \colon \R \to \R$.  But the center of each of these balls lies within distance $R$ of some $\gamma x$ in $X$ ($\gamma \in \Gamma$).  Thus
\[S_F (x, r) \subset \bigcup_{\gamma \in A_r} B(\gamma x, 2R),\]
where $A_r \subset \Gamma$ has cardinality at most $p(r)$.  Now by \lemref{shadow lemma}, for every $r > 0$ we have
\begin{align*}
\mu_x (\bd F) &\le \sum_{\gamma \in A_r} \mu_x (\shadow_{2R} (x, \gamma x))
\le \sum_{\gamma \in A_r} C_{2R} \cdot e^{- \delta_\Gamma d(x, \gamma x)}
= C_{2R} \cdot e^{- \delta_\Gamma r} \abs{A_r}.
\end{align*}
Since $\abs{A_r} \le p(r)$, we therefore have $\mu_x (\bd F) \le C_{2R} \cdot e^{-\delta_\Gamma r} p(r)$ for all $r > 0$.  But $e^{-\delta_\Gamma r} p(r) \to 0$ as $r \to +\infty$ because $\delta_\Gamma > 0$ and $p(r)$ is polynomial.  Thus $\mu_x (\bd F) = 0$, as required.
\end{proof}

A point $\xi \in \Lam$ is called a \defn{radial} (or \defn{conical}) limit point of $\Gamma$ if there is a sequence $(\gamma_n) \subset \Gamma$ such that $\gamma_n x \to \xi$ with $\set{\gamma_n x}$ boundedly close to some geodesic ray for some (any) $x \in X$.  Observe that the proof of \propref{flats} also shows:

\begin{cor}\label{radial limits}
If $\delta_{\Gamma} > 0$ then $\mu_x (\set{\xi}) = 0$ for any radial limit point $\xi \in \Lam$.
\end{cor}

On the other hand, we have the following result.  (A group action by $\Gamma$ on $X$ is said to be \defn{elementary} if either $\abs{\Lam}$ contains at most two points, or $\Gamma$ fixes a point of $\bd X$.)

\begin{lemma}\label{adams-ballmann}
Suppose either (a) the $\Gamma$-action is non-elementary with a rank one element, or (b) $\Gamma$ acts cocompactly with $X$ geodesically complete.  If $\delta_\Gamma = 0$, then $X$ is flat---that is, $X$ is isometric to flat Euclidean $n$-space $\R^n$ for some $n$.
\end{lemma}
\begin{proof}
Suppose $\delta_\Gamma = 0$.  Then $\Gamma$ must have subexponential growth, so $\Gamma$ is amenable.  In particular, $\Gamma$ has no free subgroups of rank $\ge 2$, ruling out case (a).  By Adams and Ballmann (\cite[Corollary~C]{adams-ballmann}), $X$ is flat.
\end{proof}

\propref{flats} and \lemref{adams-ballmann} immediately give us the following corollary.

\begin{cor}\label{mu_x not atomic}
If $\Gamma$ acts cocompactly on $X$, with $X$ geodesically complete and not flat, then $\mu_x$ has no atoms---that is, $\mu_x (\set{\xi}) = 0$ for all $\xi \in \bd X$.
\end{cor}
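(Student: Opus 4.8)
The plan is to exploit the logical relationship between the two preceding results: the Lemma will upgrade the hypothesis ``$X$ not flat'' into ``$\delta_\Gamma > 0$'', and the Proposition will then annihilate the $\mu_x$-measure of a single boundary point, once I exhibit that point inside the boundary of a one-dimensional flat.

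First I would record that $\delta_\Gamma \ge 0$ always: the identity lies in $\Gamma$, so $\abs{V_t} \ge 1$ (hence $\log \abs{V_t} \ge 0$) for all sufficiently large $t$, whence the $\limsup$ defining $\delta_\Gamma$ is nonnegative. Assuming $X$ is not flat, the contrapositive of the preceding Lemma gives $\delta_\Gamma \ne 0$, and combined with $\delta_\Gamma \ge 0$ this yields $\delta_\Gamma > 0$. We are therefore in the situation of the preceding Proposition, so $\mu_x(\bd F) = 0$ for every flat $F \subseteq X$.

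Next I would realize an arbitrary $\xi \in \bd X$ as a point of $\bd F$ for a suitable flat $F$. Fix $x \in X$ and consider the geodesic ray $[x, \xi)$. Because $X$ is geodesically complete (part of the standing hypothesis for this section), this ray extends to a full geodesic line $v \colon \R \to X$ with $v^+ = \xi$. Its image is an isometrically embedded copy of $\R = \R^1$, hence a one-dimensional flat $F$, and its visual boundary is exactly the two endpoints $\bd F = \set{v^+, v^-} = \set{\xi, v^-}$.

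Finally, monotonicity of the measure gives $\mu_x(\set{\xi}) \le \mu_x(\bd F) = 0$ by the Proposition, and since $\xi \in \bd X$ was arbitrary this says precisely that $\mu_x$ has no atoms. I do not expect any genuine difficulty here; the one point deserving care is the appeal to geodesic completeness, which is exactly what guarantees that every boundary point arises as an endpoint of a \emph{geodesic line} (a $1$-dimensional flat) rather than merely of a geodesic ray, whose image would not qualify as a flat and so would lie outside the scope of the Proposition.
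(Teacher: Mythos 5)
Your proof is correct and is essentially the argument the paper intends: the paper declares the corollary ``immediate'' from the preceding Lemma and Proposition, and the implicit chain is exactly yours---$X$ not flat gives $\delta_\Gamma > 0$ by the Lemma (plus $\delta_\Gamma \ge 0$), and then any $\xi \in \bd X$ lies in $\bd F$ for the one-dimensional flat $F$ obtained by extending the ray $[x,\xi)$ to a geodesic line via geodesic completeness, so the Proposition kills $\mu_x(\set{\xi})$. Your write-up simply makes explicit the two details the paper suppresses (nonnegativity of $\delta_\Gamma$ and the realization of each boundary point as an endpoint of a $1$-flat).
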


\section{A Weak Product Structure}

We now study the space $SX$ of unit-speed parametrized geodesics in $X$.  Much of our work in later sections depends on a certain product structure on this space, which we will describe shortly.

\begin{standing hypothesis}
In this section, let $\Gamma$ be a group acting by isometries on a proper CAT($0$) space $X$.
\end{standing hypothesis}

Let $SX$ be the space of unit-speed parametrized geodesics in $X$, endowed with the compact-open topology, and let $\Reg \subseteq SX$ be the space of rank one geodesics in $SX$.  For $v \in SX$ denote $v^+ = \lim_{t \to +\infty} v(t)$ and $v^- = \lim_{t \to -\infty} v(t)$.  Let
\[\GE = \setp{(v^-, v^+) \in \dbX}{v \in SX}\]
and
\[\RE = \setp{(v^-, v^+) \in \dbX}{v \in \Reg}.\]
Note that $\RE$ is open in $\GE$ by \lemref{blem}, and the natural projection $\emap \colon SX \to \GE$ is a continuous surjection with $\Reg = \emap^{-1}(\RE)$, so $\Reg$ is open in $SX$.

Much of the dynamical information lies in the following sets:
\begin{gather*}
\SXL = E^{-1} (\dbL),
\qquad
\RL = \Reg \cap E^{-1} (\dbL), \\
\GEL = \GE \cap (\dbL),
\quad \text{and} \quad
\REL = \RE \cap (\dbL).
\end{gather*}

There are many metrics on $SX$ (compatible with the compact-open topology) on which the natural $\Gamma$-action $\gamma(v) = \gamma \circ v$ is by isometries.  For simplicity, we will use the metric on $SX$ given by
\[d(v, w) = \sup_{t \in \R} e^{-\abs{t}} d(v(t), w(t)).\]
Under this metric, the footpoint projection $\pi \colon SX \to X$ given by $\pi(v) = v(0)$ is a proper map and in fact $1$-Lipschitz.  Thus we obtain:

\begin{lemma}\label{SX is proper}
Under the metric given above, $SX$ is a proper metric space, and the $\Gamma$-action on $X$ naturally induces a $\Gamma$-action on $SX$ by isometries.  The induced action on $SX$ is proper if the action on $X$ is proper, and cocompact if the action on $X$ is cocompact.
\end{lemma}

For $p \in X$, define $\beta_p \colon \dbX \to [-\infty, \infty)$ by $\beta_p (\xi, \eta) = \inf_{x \in X} (b_{\xi} + b_{\eta}) (x, p)$.

\begin{lemma}\label{beta}
For any $\xi, \eta \in \bd X$, $\beta_p (\xi, \eta)$ is finite if and only if $(\xi, \eta) \in \GE$.  Moreover,
\[\beta_p (\xi, \eta) = (b_{\xi} + b_{\eta}) (x, p)\]
if and only if $x$ lies on the image of a geodesic $v \in \emap^{-1} (\xi, \eta)$.
\end{lemma}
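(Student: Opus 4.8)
The plan is to reduce everything to one computation and one compactness argument. Throughout write $F(x) = (b_\xi + b_\eta)(x,p) = b_\xi(x,p) + b_\eta(x,p)$, so that $\beta_p (\xi,\eta) = \inf_x F(x)$; note $F(p) = 0$ by the cocycle property, so $\beta_p \le 0$ always. First I would treat the direction in which a geodesic is given. If $v \in \emap^{-1}(\xi,\eta)$, then reading off Busemann functions along $v$ gives $b_\xi(v(s),v(t)) = s - t$ and $b_\eta(v(s),v(t)) = t - s$, so $F$ is constant along $v$; call the value $c$. To see $c = \beta_p$ and that it is the infimum, fix any $x$ and use $F(x) = (b_\xi+b_\eta)(x,v(0)) + c$. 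Writing the two Busemann functions as limits along $v$,
\[
(b_\xi + b_\eta)(x,v(0)) = \lim_{t\to\infty}[\,d(x,v(-t)) + d(x,v(t)) - 2t\,],
\]
and each bracket is $\ge 0$ by the triangle inequality, since $d(v(-t),v(t)) = 2t$. Hence $F(x) \ge c$ for all $x$, so $\beta_p = c$ is finite and is attained exactly on $v$. This simultaneously proves $(\xi,\eta) \in \GE \Rightarrow \beta_p$ finite and the ``$\Leftarrow$'' half of the \emph{moreover} clause.

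Next I would prove the ``$\Rightarrow$'' half of the moreover clause, which also manufactures a geodesic from any minimizer. Suppose $F(x) = \beta_p$, and let $\rho$ be the ray from $x$ to $\eta$. Using $b_\eta(\rho(t),x) = -t$ and the cocycle property, $F(\rho(t)) = b_\xi(\rho(t),x) - t + F(x)$; since $F(\rho(t)) \ge \beta_p = F(x)$ this forces $b_\xi(\rho(t),x) \ge t$, while the $1$-Lipschitz property gives $b_\xi(\rho(t),x) \le d(\rho(t),x) = t$. The resulting equality $b_\xi(\rho(t),x) = d(\rho(t),x)$ means precisely that $x$ lies on the ray from $\rho(t)$ to $\xi$; as $\rho(t)$ lies in the $\eta$-direction from $x$, this says $\angle_x(\xi,\eta) = \pi$, so concatenating the rays from $x$ to $\xi$ and to $\eta$ yields a geodesic $v$ with $v^- = \xi$, $v^+ = \eta$ through $x$. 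Thus $x$ lies on some $v \in \emap^{-1}(\xi,\eta)$ (and, en passant, $(\xi,\eta)\in\GE$). When $\beta_p = -\infty$, both sides of the equivalence fail, so it holds vacuously.

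It remains to prove $\beta_p$ finite $\Rightarrow (\xi,\eta) \in \GE$, which I expect to be the crux. I would connect $\beta_p$ to the geometry of connecting segments. Let $\sigma_\xi,\sigma_\eta$ be the rays from $p$ to $\xi,\eta$, put $a_n = \sigma_\xi(n)$, $b_n = \sigma_\eta(n)$, $L_n = d(a_n,b_n)$, and let $v_n$ be the segment $[a_n,b_n]$. Writing Busemann functions as limits along these rays gives $F(w) = \lim_n[d(w,a_n)+d(w,b_n)-2n]$ for every $w$, while for $z$ on $v_n$ the estimate $b_\xi(z,p) \le d(z,a_n) - n$ and its $\eta$-analogue give $F(z) \le L_n - 2n$. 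Combining these yields the clean formula $\beta_p = \lim_n (L_n - 2n)$. Writing $L_n = 2n\sin(\theta_n/2)$ with $\theta_n = \overline{\angle}_p(a_n,b_n)$ the comparison angle (nondecreasing in $n$, hence convergent), finiteness of $\beta_p = \lim_n 2n(\sin(\theta_n/2)-1)$ forces $\theta_n \to \pi$.

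The main obstacle is passing from $\theta_n \to \pi$ to an honest bi-infinite geodesic. The idea is to show the segments $v_n$ stay within a bounded neighborhood of $p$, so that recentering $v_n$ at its foot-point on some $\overline{B}(p,R)$ and applying the Arzel\`a--Ascoli theorem (properness and geodesic completeness of $X$) extracts a geodesic line $v$, whose endpoints are then $\xi$ and $\eta$ by continuity. A quick comparison estimate, $L_n \le 2\sqrt{n^2 - d(p,v_n)^2}$, handles \emph{fast} escape: if $d(p,v_n)$ grew linearly in $n$ then $L_n - 2n \to -\infty$, contradicting finiteness of $\beta_p$. The delicate point is ruling out slow (sub-$\sqrt{n}$) escape, where the Euclidean comparison is too lossy to decide; this borderline is exactly the content of the standard CAT($0$) fact that comparison angle (equivalently Tits angle) at least $\pi$ produces a connecting geodesic in a complete CAT($0$) space (cf.\ \cite{bridson}), and controlling the escape rate of the $v_n$ is where the real work lies. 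Granting a minimizer (equivalently, the limit geodesic) this way, the ``$\Rightarrow$'' direction then follows, either directly from the extracted $v$ or by feeding the minimizer into the argument of the second paragraph.
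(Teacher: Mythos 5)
Your first two paragraphs are correct, and together with the identity $\beta_p = \lim_n (L_n - 2n)$ they settle three of the four implications: a connecting geodesic makes $F = (b_\xi + b_\eta)(\cdot,p)$ constant and minimal along itself (so $\beta_p$ is finite, and the ``$\Leftarrow$'' half of the moreover clause holds), and any point where the infimum is attained lies on a connecting geodesic (the ``$\Rightarrow$'' half). For context: the paper does not prove this lemma at all---it cites the proof of Proposition II.9.35 in \cite{bridson}, which is precisely the argument you are trying to reconstruct.

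The genuine gap is the implication ($\beta_p$ finite $\Rightarrow (\xi,\eta) \in \GE$), and the fact you invoke to close it is false. You correctly derive $\theta_n \to \pi$ and correctly rule out linear escape of the segments $v_n$, but then concede that sub-$\sqrt{n}$ escape is ``where the real work lies'' and appeal to a ``standard CAT(0) fact that comparison angle (equivalently Tits angle) at least $\pi$ produces a connecting geodesic in a complete CAT(0) space.'' No such fact holds. Take the rotationally symmetric Hadamard surface $dr^2 + \varphi(r)^2\, d\theta^2$ with $\varphi$ smooth, strictly convex, $\varphi(0)=0$, $\varphi'(0)=1$, and $\varphi(r)/r \nearrow \lambda > 1$: this is a proper, geodesically complete CAT(0) space whose Tits boundary is a circle of length $2\pi\lambda > 2\pi$, so pairs $\xi,\eta$ with $d_T(\xi,\eta) = \pi$ exist, and for them $\angle(\xi,\eta)=\pi$, i.e.\ the comparison angles tend to $\pi$. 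Yet no geodesic joins such a pair: the curvature is strictly negative everywhere, and Gauss--Bonnet forces every complete geodesic to cut off a Tits arc of length strictly greater than $\pi$ on each of its two sides, so the endpoints of any geodesic are at Tits distance strictly greater than $\pi$. (Consistently with the lemma, these pairs have $\beta_p = -\infty$.) The moral is that $\theta_n \to \pi$ is strictly weaker than finiteness of $\beta_p$: finiteness means the excess $2n - L_n$ stays \emph{bounded}, and it is this bounded excess---which your argument converts into an angle statement and then discards---that must drive the compactness argument producing the limit geodesic. That step, which is exactly the content of the proof the paper cites, is missing, so the forward implication remains unproven.
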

\begin{proof}
This is shown in the proof of implications $(1) \implies (2)$ and $(2) \implies (1)$ of Proposition II.9.35 in \cite{bridson}.
\end{proof}

Thus we may (abusing notation slightly) also write $\beta_p \colon SX \to \R$ to mean the map $\beta_p \circ \emap$; that is, $\beta_p (v) = \beta_p (v^-, v^+) = (b_{v^-} + b_{v^+}) (v(0), p)$.

\begin{lemma}\label{beta is continuous}
For any $p \in X$, the map $\beta_p$ is continuous on $\RE$ and upper semicontinuous on $\dbX$.
\end{lemma}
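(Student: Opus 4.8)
The plan is to split the statement: I would prove upper semicontinuity on all of $\dbX$ by a soft argument, and then upgrade it to full continuity on $\RE$ using the rank one hypothesis through \lemref{blem}.

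For the upper semicontinuity, note that $\beta_p$ is by definition the pointwise infimum, over $x \in X$, of the functions $f_x(\xi, \eta) = b_\xi(x, p) + b_\eta(x, p)$. For fixed $x$, each map $\xi \mapsto b_\xi(x,p) = \xi(x) - \xi(p)$ is continuous on $\bd X$, since convergence in $\bar X \subset C(X)$ is uniform convergence on compacta and the Busemann and visual topologies agree here; hence each $f_x$ is continuous on $\dbX$. An infimum of an arbitrary family of continuous (a fortiori upper semicontinuous) functions is upper semicontinuous, because the sublevel sets $\{\beta_p < c\} = \bigcup_{x} \{f_x < c\}$ are open. So $\beta_p$ is upper semicontinuous on $\dbX$. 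This uses nothing beyond properness, and the fact that $\beta_p$ may take the value $-\infty$ off $\GE$ does not affect the argument.

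For continuity on $\RE$, fix $(\xi,\eta) \in \RE$ and a rank one geodesic $w$ from $\xi$ to $\eta$. Since a geodesic bounding flat strips of every finite width would bound a flat half-plane (a standard Arzel\`a--Ascoli limit in the proper space $X$), rank one gives an $R_0 > 0$ for which $w$ bounds no flat strip of width $R_0$. \lemref{blem} then supplies neighborhoods $U, V$ of $\xi, \eta$ in $\bar X$ such that every pair of boundary points $(\xi', \eta')$ with $\xi' \in U$ and $\eta' \in V$ is joined by a geodesic, and every such geodesic passes within $R_0$ of $w(0)$. Given the upper semicontinuity, it remains to show that $\liminf_n \beta_p(\xi_n, \eta_n) \ge \beta_p(\xi, \eta)$ whenever $(\xi_n, \eta_n) \to (\xi,\eta)$ in $\RE$. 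If this failed, I would pass to a subsequence with $\beta_p(\xi_n, \eta_n) \to L < \beta_p(\xi,\eta)$; for large $n$ the endpoints lie in $U$ and $V$, so I can pick a connecting geodesic $v_n$ and a point $x_n$ on its image with $d(x_n, w(0)) < R_0$. By \lemref{beta} the infimum defining $\beta_p$ is attained at every point of a connecting geodesic, so $\beta_p(\xi_n, \eta_n) = (b_{\xi_n} + b_{\eta_n})(x_n, p)$. The points $x_n$ all lie in the compact ball $\overline{B}(w(0), R_0)$; passing to a further subsequence $x_n \to x_\infty$ and using $b_{\xi_n} \to b_\xi$ uniformly on compacta together with the $1$-Lipschitz property of Busemann functions, I obtain $\beta_p(\xi_n, \eta_n) \to (b_\xi + b_\eta)(x_\infty, p) \ge \beta_p(\xi,\eta)$, contradicting $L < \beta_p(\xi,\eta)$.

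The main obstacle, and the only place where the rank one hypothesis is essential, is keeping the realizing points $x_n$ in a compact set. \lemref{blem} is exactly the tool that confines every connecting geodesic of a nearby pair to a bounded neighborhood of $w(0)$. At a higher rank pair this confinement fails: connecting geodesics of nearby pairs can run off to infinity, the realizing points escape every compact set, and $\beta_p$ can jump downward in the limit, which is precisely why only upper semicontinuity persists on all of $\dbX$. A secondary, routine point is the standard limiting fact that a rank one geodesic bounds no flat strip of some positive width, which is what licenses the application of \lemref{blem}.
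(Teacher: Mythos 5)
Your proof is correct, and it rests on the same two pillars as the paper's: the soft ``infimum of continuous functions'' argument for upper semicontinuity on $\dbX$ (identical to the paper's), and \lemref{blem} as the confinement tool for the hard half. The difference is in how the hard half is organized. The paper proves two-sided continuity on $\RE$ directly: it chooses connecting geodesics $v_n$ with footpoints within $R$ of $v(0)$, extracts a strong limit geodesic $u$ via Arzel\`a-Ascoli in $SX$, identifies $u$ as parallel to $v$ (so $\beta_p(u) = \beta_p(v)$), and finishes with an equicontinuity argument for the functions $(b_{v_n^-} + b_{v_n^+})(\cdot, p)$. You instead use the global upper semicontinuity to reduce continuity on $\RE$ to lower semicontinuity there, and prove that by tracking infimum-realizing points $x_n$ (supplied by \lemref{beta}) inside the compact ball $\overline{B}(w(0), R_0)$. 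Your route is marginally more economical: it needs only compactness of a closed ball in $X$ plus the $1$-Lipschitz/uniform-on-compacta convergence of Busemann functions, and it sidesteps both the Arzel\`a-Ascoli extraction in $SX$ and the identification of the limit geodesic's endpoints (in effect, continuity of the endpoint map) hidden in the paper's ``$u$ must be parallel to $v$'' step. What the paper's version buys is the explicit limit geodesic, a pattern it reuses in nearby arguments such as \lemref{bounded} and \lemref{quotient}. Both proofs also rely on the standard fact you flag explicitly---a rank one geodesic bounds no flat strip of some positive width---which the paper invokes implicitly whenever it applies \lemref{blem} to elements of $\Reg$.
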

\begin{proof}
Continuity on $\RE$ first.  Fix $p \in X$, and suppose $(v_n^-, v_n^+) \to (v_-, v^+)$.  By \lemref{blem}, we may assume that $d(v_n(0), v(0)) < R$ for some $R > 0$.  So by the Arzel\`a-Ascoli Theorem, we may pass to a subsequence such that $v_n \to u$ for some $u \in SX$.  Then $u$ must be parallel to $v$, hence $\beta_p (u) = \beta_p (v)$.  Define $c_w \colon X \to \R$ by $c_w (q) = (b_{w^-} + b_{w^+}) (q, p)$.  Thus $c_w (w(0)) = \beta_p (w)$ for all $w \in SX$.  Since $(v_n^-, v_n^+) \to (v_-, v^+)$, we have $c_{v_n} \to c_v$ uniformly on $\overline{B}(u(0), 1)$, and therefore $\set{c_{v_n}} \cup \set{c_v}$ is uniformly equicontinuous on $\overline{B}(u(0), 1)$.  Thus $v_n(0) \to u(0)$ gives us $c_{v_n} (v_n(0)) \to c_{v} (u(0))$.  But $c_{v} (u(0)) = c_{u} (u(0)) = \beta_p (u)$, and $c_{v_n} (v_n(0)) = \beta_p (v_n)$, hence $\beta_p (v_n) \to \beta_p (u)$.  Therefore, $\beta_p$ is continuous on $\RE$.

Now semicontinuity on $\dbX$.  Recall that $\beta_p (\xi, \eta) = \inf_{x \in X} (b_{\xi} + b_{\eta}) (x, p)$.  Fix $p \in X$, and note that for fixed $x \in X$, the map $(\xi, \eta) \mapsto (b_\xi + b_\eta)(x, p)$ is continuous.  But the infimum of a family of continuous functions is upper semicontinuous.
\end{proof}

For $v \in SX$, let $P_v$ be the set of $w \in SX$ parallel to $v$ (we will also write $w \parallel v$).  Let $X_v$ be the union of the images of $w \in P_v$.  Recall (\cite{ballmann} or \cite{bridson}) that $X_v$ splits as a canonical product $Y_v \times \R$, where $Y_v$ is a closed and convex subset of $X$ with $v(0) \in Y_v$.  Call $X_v$ the \defn{parallel core} of $v$ and $Y_v$ the \defn{transversal} of $X_v$ at $v$.

If $v$ is rank one, then $Y_v$ is bounded and therefore has a unique circumcenter (see \cite{ballmann} or \cite{bridson}).  Thus we have a canonical \defn{central geodesic} associated to each $X_v$.  Let $\Reg_C$ denote the subset of central geodesics in $\Reg$.

Now suppose $v_n \to v \in SX$.  Then $b_{v_n^-} \to b_{v^-}$ and $b_{v_n^+} \to b_{v^+}$ by coincidence of the visual and Busemann boundaries.  Furthermore, $b_{v_n^-} (v_n(0), x) \to b_{v^-} (v(0), x)$ for all $x \in X$ because $v_n(0) \to v(0)$ while $b_{v_n^-} \to b_{v^-}$ uniformly on $\overline{B} (v, 1)$.  This shows the map $\pi_x$ in the following definition is continuous.

\begin{definition}\label{weak-def}
Let $\pi_x \colon SX \to \GER \subseteq \dbX \times \R$ be the continuous map given by $\pi_x (v) = (v^-, v^+, b_{v^-} (v(0), x))$.  Say that a sequence $(v_n) \subset SX$ \defn{converges weakly} to $v \in SX$ if $\pi_x (v_n) \to \pi_x (v)$.
\end{definition}

For a sequence that converges in $SX$, we will sometimes say it \defn{converges strongly} to emphasize that the convergence is not in the weak sense.

\begin{note}
Weak convergence does not depend on choice of $x \in X$.
\end{note}

\begin{example}
Consider the hyperbolic plane $\H^2$.  Cut along a geodesic, and isometrically glue the two halves to the two sides of a flat strip of width $1$.  Call the resulting space $X$.  A sequence $v_n$ of geodesics in $X$ which converges strongly to one of the geodesics (call it $v$) bounding the flat strip will also converge weakly to all the geodesics $w$ parallel to $v$ such that $w(0)$ lies on the geodesic segment orthogonal to the image of $v$.  (See \figref{figure for weak-strong convergence example}.)
\begin{figure}[ht]
\begin{tikzpicture} %
  \fill [gray, opacity = 0.15]
    (0, 3.5) rectangle (3, 0)
    (8, 3.5) rectangle (5, 0);
  \fill [white]
    (3, 0) rectangle (5, 3.5); %
  \draw
    (0, 0) -- (8, 0)
    (3, 0) -- +(0, 3.5)
    (5, 0) -- +(0, 3.5)
    (3, 2) -- +(2, 0);
  \newcommand{\arrow}[1]
  {
    edge #1 +(0, -1/2)
    [fill] circle (2pt)
  }
  \path [->, ultra thick]
    (5, 2) \arrow{node [left] {$v$}}
    ++(-1, 0) \arrow{node [left] {$w$}}
    ++(1, 0)
    \foreach \x in {0.5, 0.25, 0.35, 0.5}
      {++(\x, 0) \arrow{}}
    ++(0.7, 0) \arrow{node [right] {$v_n$}};
  \fill
    (5, 2)
    \foreach \x in {1, 2, 3}
      {+(\x*0.5/4, -0.2) circle (0.75pt)};
\end{tikzpicture}
\caption{The geodesics $v_n$ converge weakly to both $v$ and $w$, but strongly to $v$ only.}
\label{figure for weak-strong convergence example}
\end{figure}
\end{example}

Let us now relate equivalence of geodesics in the product structure to the idea of stable and unstable horospheres, and to the transversals of parallel cores.

\begin{definition}
For $v \in SX$, the \defn{stable horosphere} at $v$ is the set of geodesics
\[H^s (v) = \setp{w \in SX}{w^+ = v^+ \text{ and } b_{v^+} (w(0), v(0)) = 0}.\]
Similarly, the \defn{unstable horosphere} is the set of geodesics
\[H^u (v) = \setp{w \in SX}{w^- = v^- \text{ and } b_{v^-} (w(0), v(0)) = 0}.\]
\end{definition}

\begin{prop}\label{equivalent-geodesics}
For $v, w \in SX$ and $x \in X$, the following are equivalent:
\begin{enumerate}
\item  $\pi_x (v) = \pi_x (w)$.
\item  $w \in H^u (v)$ and $v^+ = w^+$.
\item  $w \in H^s (v)$ and $v^- = w^-$.
\item  $w\in H^s (v) \cap H^u (v)$.
\item  $v \parallel w$ and $w(0) \in Y_v$.
\end{enumerate}
\end{prop}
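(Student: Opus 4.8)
The plan is to reduce each of the five conditions to a single geometric statement, namely that $v$ and $w$ share both endpoints (equivalently, are parallel) and that $w(0)$ lies on the cross-section of the associated flat strip through $v(0)$. The routine half of the work is bookkeeping with the Busemann cocycle property; the geometric heart is a flat-strip computation relating $b_{v^-}$ and $b_{v^+}$.

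First I would unwind condition (1). Writing out $\pi_x(v) = \pi_x(w)$ gives $v^- = w^-$, $v^+ = w^+$, and $b_{v^-}(v(0), x) = b_{w^-}(w(0), x)$. Since $v^- = w^-$ forces $b_{v^-} = b_{w^-}$, the cocycle identity $b_{v^-}(v(0), x) = b_{v^-}(v(0), w(0)) + b_{v^-}(w(0), x)$ cancels the $x$-dependence, so the third equation is equivalent to $b_{v^-}(v(0), w(0)) = 0$ (in particular independent of $x$, as the Note after \defref{weak-def} predicts). Thus (1) is equivalent to the three conditions $v^- = w^-$, $v^+ = w^+$, and $b_{v^-}(v(0), w(0)) = 0$. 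Since $b_{v^-}(w(0), v(0)) = -b_{v^-}(v(0), w(0))$, and $H^u(v)$ asks precisely for $w^- = v^-$ together with $b_{v^-}(w(0), v(0)) = 0$, this three-condition form is word-for-word condition (2). Hence (1) $\iff$ (2) is immediate.

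Next comes the geometric step driving (2) $\iff$ (3) $\iff$ (4). Each of (2), (3), (4) forces $v^- = w^-$ and $v^+ = w^+$. Two geodesics sharing their endpoint pair are parallel and, by the Flat Strip Theorem (\cite{ballmann}, \cite{bridson}), bound a flat strip $F \cong \R \times [0, d]$ in which $v$ and $w$ are horizontal lines of matching orientation. Choosing Euclidean coordinates with $v(0) = (0,0)$ and $w(0) = (t_0, c_0)$, the two Busemann functions restrict to affine functions of the horizontal coordinate, and a direct computation gives $b_{v^-}(v(0), w(0)) = b_{v^+}(w(0), v(0)) = -t_0$. So, given the shared endpoints, the condition $b_{v^-}(w(0), v(0)) = 0$ (from $H^u$), the condition $b_{v^+}(w(0), v(0)) = 0$ (from $H^s$), and their conjunction are all equivalent to $t_0 = 0$. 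This yields (2) $\iff$ (3) $\iff$ (4), all coinciding with the reformulated (1).

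Finally I would connect condition (5) through the parallel-core decomposition $X_v = Y_v \times \R$ recalled just before the proposition. When $v$ and $w$ share endpoints, $w \in P_v$, so $w(0) \in X_v$; moreover the $\R$-coordinate of the splitting is exactly a level function of the Busemann cocycle, so that the slices $Y_v \times \set{t}$ are the level sets of $b_{v^-}(v(0), \cdot)$, with $v(0) \in Y_v$ normalized to lie in the $0$-slice. Hence $w(0) \in Y_v$ iff $b_{v^-}(v(0), w(0)) = 0$, which is the reformulation of (1); conversely $v \parallel w$ already supplies the shared endpoints (parallel meaning bounded distance, hence common oriented endpoints), so (5) $\iff$ (1). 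The only nonformal inputs are the flat-strip identity $b_{v^-}(v(0), w(0)) = b_{v^+}(w(0), v(0))$ and the identification of the $\R$-factor of $X_v$ with a Busemann level; both are standard consequences of flat-strip theory, so I expect the main obstacle to be organizing these geometric facts cleanly rather than any hard estimate.
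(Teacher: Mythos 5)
Your proposal is correct, and it shares the paper's basic skeleton: note that every condition forces $v \parallel w$, then reduce each condition to a single Busemann-cocycle equation. The difference lies in the tools used for the two nontrivial steps. For the equivalence of (1)--(4), the paper never introduces strip coordinates: it quotes \lemref{beta} to get, for parallel geodesics, the one-line identity $(b_{v^-} + b_{v^+})(v(0), x) = \beta_x (v) = \beta_x (w) = (b_{w^-} + b_{w^+})(w(0), x)$, from which the unstable equation $b_{v^-}(v(0),x) = b_{w^-}(w(0),x)$ holds if and only if the stable one does. Your explicit computation in the flat strip $\R \times [0, d]$ proves exactly the same fact (that $b_{v^-} + b_{v^+}$ takes the same value at $v(0)$ and $w(0)$), at the cost of one suppressed step: that the ambient Busemann functions restrict to the intrinsic affine ones on the strip, which holds because the strip is convex and contains the rays defining $v^{\pm}$, but deserves a sentence. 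For (5), the paper characterizes $Y_v$ as the preimage of $v(0)$ under orthogonal projection onto the image of $v$ and uses that this projection cannot increase $b_{v^-}(\cdot, x)$ or $b_{v^+}(\cdot, x)$ (\cite[Lemma II.9.36]{bridson}) while preserving their sum $\beta_x$; you instead identify the slices $Y_v \times \set{t}$ of the splitting $X_v = Y_v \times \R$ with the level sets of $b_{v^-}$, a correct but not-quite-free product-metric computation (with the same convexity caveat, now for $X_v$). In short, your route is more elementary and self-contained, re-deriving everything from the Flat Strip Theorem, while the paper's is shorter because it reuses the $\beta_x$ machinery it has already built and quotes the projection lemma instead of computing in coordinates.
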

\begin{proof}
We may assume throughout the proof that $v \parallel w$.  Since
\[(b_{v^-} + b_{v^+}) (v(0), x) = \beta_x (v) = \beta_x (w) = (b_{w^-} + b_{w^+}) (w(0), x),\]
we have $b_{v^-} (v(0), x) = b_{w^-} (w(0), x)$ if and only if $b_{v^+} (v(0), x) = b_{w^+} (w(0), x)$; this proves the equivalence of the first four conditions.  Recall (\cite[Proposition I.5.9]{ballmann}, or \cite[Theorem II.2.14(2)]{bridson}) that
$Y_v$ is preimage of $v(0)$ in $X_v$ under the orthogonal projection onto the image of $v$.  Now orthogonal projection onto the image of $v$ cannot increase either $b_{v^-} (\cdot, x)$ or $b_{v^+} (\cdot, x)$ by \cite[Lemma II.9.36]{bridson}, but $\beta_x (v) = \beta_x (w)$ because $v \parallel w$.  So for $w(t_0) \in Y_v$,
\[b_{v^-} (v(0), x) = b_{v^-} (w(t_0), x) = b_{w^-} (w(t_0), x) = t_0 + b_{w^-} (w(0), x).\]
Thus $\pi_x (v) = \pi_x (w)$ if and only if $w(0) \in Y_v$ (note $w(t_0) \in Y_v$ for only one $t_0 \in \R$).  This concludes the proof.
\end{proof}

We will write $u \sim v$ if $v$ and $w$ satisfy any of the equivalent conditions in the above proposition.  Clearly $\sim$ is an equivalence relation.  Note that by \propref{equivalent-geodesics}, this relation does not depend on choice of $x \in X$.

\begin{lemma}\label{bounded}
If $v_n \to v$ weakly and $v \in \Reg$, then $\set{v_n(0)}$ is bounded in $X$.
\end{lemma}
\begin{proof}
Fix $x \in X$.  Since $v_n^- \to v^-$, we have $b_{v_n^-} \to b_{v^-}$ uniformly on compact subsets, and so $b_{v_n^-} (v(0), x) \to b_{v^-} (v(0), x)$.  On the other hand, we know that $b_{v_n^-} (v_n(0), x) \to b_{v^-} (v(0), x)$ by hypothesis, so
\[\lim_{n \to \infty} b_{v_n^-} (v(0), x) = b_{v^-} (v(0), x) = \lim_{n \to \infty} b_{v_n^-} (v_n(0), x).\]
Hence, by the cocycle property of Busemann functions,
\[\lim_{n \to \infty} \left( b_{v_n^-} (v(0), v_n(0)) \right) = \lim_{n \to \infty} \left( b_{v_n^-} (v(0), x) - b_{v_n^-} (v_n(0), x) \right) = 0.\]
Now let $R > 0$ be large enough so that $v$ does not bound a flat strip in $X$ of width $R$.  By \lemref{blem}, for all sufficiently large $n$ there exist $t_n \in \R$ such that $d(v_n(t_n), v(0)) < R$.
Thus
\begin{align*}
\abs{t_n}
= \abs{b_{v_n^-} (v_n(t_n), v_n(0))}
&= \abs{b_{v_n^-} (v_n(t_n), v(0)) - b_{v_n^-} (v_n(0), v(0))} \\
&\le d(v_n(t_n), v(0)) + \abs{b_{v_n^-} (v_n(0), v(0))}
< R + 1
\end{align*}
for all sufficiently large $n$.  In particular,
\[d(v_n(0), v(0)) \le d(v_n(0), v_n(t_n)) + d(v_n(t_n), v(0)) < \abs{t_n} + R < 2R + 1.\qedhere\]
\end{proof}

\begin{lemma}\label{quotient}
If $v_n \to v$ weakly and $v \in \Reg$ then a subsequence converges strongly to some $u \sim v$.
\end{lemma}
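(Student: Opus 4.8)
The plan is to combine the boundedness supplied by \lemref{bounded} with the properness of $SX$ to extract a strongly convergent subsequence, and then to identify its strong limit with a geodesic $u \sim v$ by comparing the two notions of convergence through the single map $\pi_x$.

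First I would invoke \lemref{bounded}: since $v_n \to v$ weakly and $v \in \Reg$, the footpoints $\set{v_n(0)}$ lie in a bounded subset of $X$, hence (as $X$ is proper) in some compact set $K \subseteq X$. By \lemref{SX is proper} the footpoint projection $\pi \colon SX \to X$, $\pi(w) = w(0)$, is a proper map, so the entire sequence $(v_n)$ lies in the compact set $\pi^{-1}(K)$. Therefore some subsequence $v_{n_k}$ converges strongly to a geodesic $u \in SX$.

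It then remains to show $u \sim v$. Strong convergence $v_{n_k} \to u$ gives $\pi_x (v_{n_k}) \to \pi_x (u)$, since $\pi_x$ is continuous (as observed just before \defref{weak-def}). On the other hand, the weak convergence hypothesis $\pi_x (v_n) \to \pi_x (v)$ passes to the subsequence, so $\pi_x (v_{n_k}) \to \pi_x (v)$ as well. Because $\GER \subseteq \dbX \times \R$ is metrizable, and hence Hausdorff, limits are unique; thus $\pi_x (u) = \pi_x (v)$. By condition \itemref{} (1) of \propref{equivalent-geodesics} (applied with the pair $u, v$), this is precisely the statement that $u \sim v$, which completes the argument.

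Since every substantive step has already been isolated in a preceding lemma, I do not anticipate a genuine obstacle here. The only points requiring care are the bookkeeping of which convergence---weak versus strong---is invoked at each stage, and the observation that the strong limit $u$ need not coincide with $v$ itself, but only with a geodesic parallel to $v$ whose footpoint lies in the transversal $Y_v$; it is exactly this flexibility that \propref{equivalent-geodesics} packages as the relation $\sim$.
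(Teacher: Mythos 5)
Your proof is correct and follows essentially the same route as the paper: use \lemref{bounded} to confine the footpoints to a compact set, extract a strongly convergent subsequence (the paper quotes the Arzel\`a-Ascoli Theorem directly, whereas you route through the properness of the footpoint projection from \lemref{SX is proper}, which is the same content), and then identify $\pi_x(u) = \pi_x(v)$ by comparing the strong and weak limits under the continuous map $\pi_x$, which is exactly condition (1) of \propref{equivalent-geodesics} defining $u \sim v$.
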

\begin{proof}
Fix $x \in X$.  By \lemref{bounded}, $\set{v_n(0)}$ lies in some compact set in $X$.  Hence by the Arzel\`a-Ascoli Theorem, passing to a subsequence we may assume that $(v_n)$ converges in $SX$ to some geodesic $u$.  Then $\pi_x (u) = \lim \pi_x (v_n)$ by continuity of $\pi_x$, while $\pi_x (v) = \lim \pi_x (v_n)$ by hypothesis, and therefore $u \sim v$.
\end{proof}

\begin{rem}
Restricting $\pi_x$ to $\Reg_C$ does not automatically give us a homeomorphism from $\Reg_C$ to $\RER$. %
We get a topology on $\Reg_C$ at least as course as the subspace topology, though.  An explicit example of the failure of $\pi_x$ to be a homeomorphism is as follows:  Take a closed hyperbolic surface, and replace a simple closed geodesic with a flat cylinder of width $1$; then there are sequences of geodesics that limit, weakly but not strongly, onto one of the central geodesics in the flat cylinder.
\end{rem}

From \lemref{quotient}, we see that the continuous map $\pi_x \res{\Reg}$ is closed (that is, the image of every closed set is closed).  Thus $\pi_x \res{\Reg}$ is a topological quotient map onto $\RER$.

Let $g^t \colon SX \to SX$ denote the geodesic flow; that is, $(g^t (v))(s) = v (s + t)$.  Note that $g^t$ commutes with $\Gamma$.  Observe also that the geodesic flow $g^t$ descends to the action on $\GER$ given by $g^t (\xi, \eta, s) = (\xi, \eta, s + t)$, hence this is clearly an action by homeomorphisms.  We also have the following complementary result.

\begin{prop}\label{homeos2}
The $\Gamma$-action on $SX$ descends to an action on $\GER$ by homeomorphisms.
\end{prop}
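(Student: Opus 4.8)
The plan is to produce the $\Gamma$-action on $\GER$ as the unique one making the surjection $\pi_x \colon SX \to \GER$ equivariant, and then to read off from an explicit formula that it is by homeomorphisms. (Note $\pi_x$ is onto: for fixed $(\xi,\eta) \in \GE$ the geodesic flow $g^t$ sweeps the last coordinate $b_\xi(\cdot,x)$ through all of $\R$.)

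First I would check that the fibers of $\pi_x$ are permuted by $\Gamma$. By \propref{equivalent-geodesics} the fibers of $\pi_x$ are exactly the classes of $\sim$, so it suffices to see that $v \sim w$ implies $\gamma v \sim \gamma w$. This is immediate from the description in \propref{equivalent-geodesics}: $v \sim w$ means $v \parallel w$ and $w(0) \in Y_v$, and an isometry $\gamma$ carries parallel geodesics to parallel geodesics with $Y_{\gamma v} = \gamma Y_v$. Hence there is a unique map $\gamma_* \colon \GER \to \GER$ with $\gamma_* \circ \pi_x = \pi_x \circ \gamma$, and surjectivity of $\pi_x$ forces $\gamma \mapsto \gamma_*$ to be an action (the identity and composition laws descend from the $\Gamma$-action on $SX$).

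Next I would compute $\gamma_*$ explicitly. Applying the equivariance $b_{\gamma\xi}(\gamma a, \gamma b) = b_\xi(a, b)$ and the cocycle property to $\pi_x(\gamma v)$ gives
\[\gamma_*(\xi, \eta, s) = \paren{\gamma\xi,\ \gamma\eta,\ s + b_\xi(x, \gamma^{-1}x)},\]
where $\gamma$ acts on $\bd X$ through its homeomorphic extension to $\overline X$. The last coordinate depends only on $\xi$, $\gamma$, and the fixed basepoint $x$, so this is defined on all of $\GER$; and since $\gamma$ sends geodesics to geodesics, $\gamma_*$ preserves $\GE$. Continuity is now transparent: the first two coordinates are continuous because $\gamma$ is a homeomorphism of $\bd X$, and the third because $\xi \mapsto b_\xi(x, \gamma^{-1}x)$ is continuous (evaluation of a Busemann function at fixed points is continuous for the visual topology). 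The same applied to $\gamma^{-1}$ shows $(\gamma_*)^{-1} = (\gamma^{-1})_*$ is continuous, so each $\gamma_*$ is a homeomorphism.

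I expect the one genuine subtlety to be that $\pi_x$ is a topological quotient map only after restriction: $\pi_x \res{\Reg}$ maps onto $\RER$ as a quotient map, but $\pi_x$ itself need not be a quotient map on all of $SX$ (cf.\ the preceding remark on weak versus strong convergence). Consequently one cannot obtain continuity of $\gamma_*$ merely by pushing it through $\pi_x$; the explicit formula is what delivers continuity globally on $\GER$.
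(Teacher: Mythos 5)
Your proposal is correct and follows essentially the same route as the paper: the paper's proof rests on exactly the computation $\pi_x(\gamma v) = \paren{\gamma v^-, \gamma v^+, b_{v^-}(v(0),x) + b_{v^-}(x,\gamma^{-1}x)}$, which is your explicit formula for $\gamma_*$, together with continuity of $\xi \mapsto b_\xi(x,\gamma^{-1}x)$ in the visual topology, and then gets the inverse by applying the same argument to $\gamma^{-1}$. The only cosmetic difference is that the paper phrases continuity via weakly convergent sequences $v_n \to v$ in $SX$ rather than directly on $\GER$, while you add an explicit well-definedness check through \propref{equivalent-geodesics}; both reduce to the same two facts.
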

\begin{proof}
Fix $x \in X$.  First compute
\begin{align*}
\pi_x (\gamma v) &= \left( \gamma v^-, \gamma v^+, b_{\gamma v^-} (\gamma v(0), x) \right) \\
&= \left( \gamma v^-, \gamma v^+, b_{v^-} (v(0), \gamma^{-1} x) \right) \\
&= \left( \gamma v^-, \gamma v^+, b_{v^-} (v(0), x) + b_{v^-} (x, \gamma^{-1} x) \right).
\end{align*}
Now recall $\xi_n \to \xi$ in $\bd X$ if and only if $b_{\xi_n} (\cdot, p) \to b_{\xi} (\cdot, p)$ uniformly on compact subsets, for $p \in X$ arbitrary.  Hence if $v_n^- \to v^-$ then $b_{v_n^-} (x, \gamma^{-1} x) \to b_{v^-} (x, \gamma^{-1} x)$.  So suppose $v_n \to v$ weakly in $SX$.  Then
\begin{align*}
\pi_x (\gamma v)
&= \left( \gamma v^-, \gamma v^+, b_{v^-} (v(0), x) + b_{v^-} (x, \gamma^{-1} x) \right) \\
&= \lim_{n \to \infty} \left( \gamma v_n^-, \gamma v_n^+, b_{v_n^-} (v_n (0), x) + b_{v_n^-} (x, \gamma^{-1} x) \right) \\
&= \lim_{n \to \infty} \pi_x (\gamma v_n).
\end{align*}
Thus $\gamma$ descends to a continuous map $\GER \to \GER$.  But then $\gamma^{-1}$ also descends to a continuous map, and therefore $\Gamma$ acts by homeomorphisms on $\GER$.
\end{proof}

\section{Recurrence}

We now study some of the basic topological properties of the geodesic flow on $SX$.  We want to study these properties both on $SX$ and its weak product structure $\GER$ from the previous section.

\begin{standing hypothesis}
In this section, let $\Gamma$ be a group acting by isometries on a proper CAT($0$) space $X$.
\end{standing hypothesis}

\begin{definition}\label{rec def}
A geodesic $v \in SX$ is said to \defn{$\Gamma$-accumulate} on $w \in SX$ if there exist sequences $t_n \to +\infty$ and $\gamma_n \in \Gamma$ such that $\gamma_n g^{t_n} (v) \to w$ as $n \to \infty$.  A geodesic $v \in SX$ called \defn{$\Gamma$-recurrent} if it $\Gamma$-accumulates on itself.
\end{definition}

The definition given above describes \defn{forward} $\Gamma$-recurrent geodesics.  A \defn{backward} $\Gamma$-recurrent geodesic is a geodesic $v \in SX$ such that $\flip v$ is forward $\Gamma$-recurrent%
, where $\flip \colon SX \to SX$ is the map given by $(\flip v)(t) \mapsto v(-t)$%
.  We will also sometimes use the terms \defn{weakly} and \defn{strongly}, as in \defref{weak-def}, to specify the convergence in \defref{rec def}.

Recurrence is stronger than nonwandering:

\begin{definition}\label{nw def}
A geodesic $v \in SX$ is called \defn{nonwandering mod $\Gamma$} if there exists sequences $v_n \in SX$, $t_n \to +\infty$, and $\gamma_n \in \Gamma$ such that $\gamma_n g^{t_n} (v_n) \to w$ as $n \to \infty$.
\end{definition}

Note that $v \in SX$ is $\Gamma$-recurrent if and only if its projection onto $\modG{SX}$ is recurrent under the geodesic flow $g^t_\Gamma$ on $\modG{SX}$.  Similarly, $v \in SX$ is nonwandering mod $\Gamma$ if and only if its projection is nonwandering under the geodesic flow $g^t_\Gamma$ on $\modG{SX}$.

Eberlein (\cite{eb72}) proved the following result for manifolds of nonpositive curvature; it describes duality in $\bd X$ in terms of geodesics.

\begin{lemma}[Lemma III.1.1 in \cite{ballmann}]\label{duality-prop}
Suppose $X$ is geodesically complete.  If $v, w \in SX$ and $v^+, w^-$ are $\Gamma$-dual, then there exist $(\gamma_n, t_n, v_n) \in \Gamma \times \R \times SX$ such that $v_n \to v$ and $\gamma_n g^{t_n} v_n \to w$.
\end{lemma}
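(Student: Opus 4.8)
The plan is to realize $w$ as a limit of reparametrized $\Gamma$-translates of geodesics that converge to $v$, by first extracting north--south-type information from the duality hypothesis and then building explicit connecting geodesics whose convergence is controlled by CAT($0$) convexity and the properness of $SX$ (\lemref{SX is proper}). Fix a basepoint $p \in X$. By definition of $\Dualset$, the hypothesis $v^+ \in \Dualset(w^-)$ furnishes a sequence $(\gamma_n) \subset \Gamma$ with $\gamma_n p \to w^-$ and $\gamma_n^{-1} p \to v^+$ in $\bar X$. The key elementary observation is that in a proper CAT($0$) space, if $z_n \to \zeta \in \bd X$ and $d(z_n, z_n')$ stays bounded, then $z_n' \to \zeta$ as well. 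Hence for each \emph{fixed} $m$ the points $\gamma_n v(-m)$ are a bounded perturbation of $\gamma_n p$, so $\gamma_n v(-m) \to w^-$; symmetrically, for each fixed $k$ we get $\gamma_n^{-1} w(k) \to v^+$. A diagonal argument then produces sequences $m_n, k_n \to \infty$, growing slowly enough that we retain
\[
v(-m_n) \to v^-, \quad \gamma_n^{-1} w(k_n) \to v^+, \quad \gamma_n v(-m_n) \to w^-, \quad w(k_n) \to w^+ .
\]

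First I would construct, for each $n$, the complete geodesic $v_n$ extending the CAT($0$) segment from $v(-m_n)$ to $\gamma_n^{-1} w(k_n)$ (using geodesic completeness), parametrized so that its footpoint $v_n(0)$ is a nearest point to $v(0)$. Because the two endpoints of this segment converge to $v^-$ and $v^+$ respectively, while the segment is anchored at the actual point $v(-m_n)$ of $v$, convexity of the metric forces $v_n \to v$ on compact parameter intervals; I would invoke Arzel\`a--Ascoli via \lemref{SX is proper} to extract a genuine subsequential limit in $SX$. Next I would choose the flow time $t_n$ to be the $v_n$-parameter of the point nearest $\gamma_n^{-1} w(0)$, so that $(\gamma_n g^{t_n} v_n)(0) = \gamma_n v_n(t_n)$ lies near $w(0)$. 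The image $\gamma_n g^{t_n} v_n$ then extends the translated segment $[\gamma_n v(-m_n),\, w(k_n)]$, whose endpoints converge to $w^-$ and $w^+$ and which is anchored at the point $w(k_n)$ of $w$; the same convexity-plus-compactness argument gives $\gamma_n g^{t_n} v_n \to w$. The asymmetric roles of $v(-m_n)$ (pushed by $\gamma_n$ toward the attractor $w^-$) and $\gamma_n^{-1} w(k_n)$ (pushed by $\gamma_n^{-1}$ toward $v^+$, hence mapped by $\gamma_n$ back out toward $w^+$) are exactly what let both endpoints of the image be controlled simultaneously.

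The main obstacle is upgrading these convergences from ``correct endpoints'' to convergence to $v$ and to $w$ \emph{exactly}, since a CAT($0$) geodesic is not determined by its pair of endpoints when it bounds a flat strip: a priori the connecting geodesics could converge only to a geodesic parallel to $v$ (respectively $w$). I expect this to be the delicate step. The resolution is to exploit the anchoring more quantitatively: because $v(-m_n)$ lies on $v$ itself, the convex function measuring the transversal displacement of $v_n$ from $v$ vanishes at the anchor, so controlling its growth over the (diverging) segment length pins the limit's footpoint and direction to those of $v$; the analogous estimate on the $w$-side, together with the alignment of footpoints enforced by the choice of $t_n$, identifies the image limit with $w$. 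A convenient alternative for organizing the backward-side bookkeeping is to apply the forward argument to $\flip v$ and $\flip w$ and combine, but the core difficulty remains the flat-strip ambiguity, handled by the convexity estimates above.
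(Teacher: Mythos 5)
Your overall construction is the standard one (it is essentially the proof of Lemma III.1.1 in \cite{ballmann}, which this paper cites rather than reproves): join a point far out the negative end of $v$ to $\gamma_n^{-1}$ of a point far out the positive end of $w$, extend to a complete geodesic, and reparametrize by the flow. But there is a genuine gap at the pivotal step. Your diagonal argument retains only the four boundary convergences $v(-m_n) \to v^-$, $\gamma_n^{-1}w(k_n) \to v^+$, $\gamma_n v(-m_n) \to w^-$, $w(k_n) \to w^+$, and you then assert that convexity plus the anchoring at $v(-m_n)$ forces $v_n \to v$ on compact intervals. That implication is false. Take $X = \R^2$ with $\Gamma = \Z^2$, let $v$ be the $x$-axis oriented east, $w$ the parallel line $y = 1/2$ with the same orientation (so $v^+ \in \Dualset(w^-)$ via $\gamma_n = $ translation by $(-n,0)$), and take $m_n = k_n = n$. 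All four displayed convergences hold --- indeed they hold for \emph{every} choice of $m_n, k_n \to \infty$, so your diagonalization imposes no constraint whatsoever in this example --- yet the segment from $v(-m_n) = (-n,0)$ to $\gamma_n^{-1}w(k_n) = (2n, 1/2)$ converges uniformly on compact sets to the line $y = 1/6$: a geodesic parallel to $v$ but not $v$. The quantitative convexity estimate proposed in your last paragraph cannot rescue this: the displacement from $v$ vanishes at the anchor and equals $1/2$ at the far end, so convexity bounds the displacement near the footpoint only by $\tfrac{m_n}{L_n}$ times the far-end displacement, here $\tfrac{n}{3n}\cdot\tfrac12 = \tfrac16$, which does not tend to $0$ because $m_n$ is comparable to the segment length $L_n$. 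So the failure mode you flagged (convergence to a parallel of $v$) actually occurs under exactly the hypotheses your argument secures; it is not a removable technicality.

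The repair is to run the diagonal argument one level up, on \emph{segment} convergence rather than endpoint convergence; this also dissolves the circularity between the choices of $m_n$ and $k_n$ that your sketch leaves unaddressed ($m_n$ must be small relative to data depending on $k_n$, and vice versa). For each \emph{fixed} pair $(m,k)$, as $n \to \infty$ one has $\gamma_n^{-1}w(k) \to v^+$, hence the segment $[v(-m), \gamma_n^{-1}w(k)]$ converges uniformly on compact sets to the unique ray from $v(-m)$ to $v^+$, which is $v|_{[-m,\infty)}$; simultaneously its $\gamma_n$-image $[\gamma_n v(-m), w(k)]$, anchored at $w(k)$, converges to $w|_{(-\infty,k]}$ because $\gamma_n v(-m) \to w^-$. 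Both limits are now the correct geodesics precisely because the basepoints are fixed while only the far endpoints go to infinity. Diagonalizing over $(m,k) = (j,j)$ with respect to these two uniform-on-compacta convergences produces $m_n = k_n = j_n \to \infty$ growing slowly enough that the extended geodesics $v_n$ (parametrized with $v_n(-j_n) = v(-j_n)$) converge to $v$ itself, and $\gamma_n g^{t_n} v_n \to w$ with $t_n$ the shift matching the far endpoint to $w(j_n)$. In other words, ``slowly enough'' must be measured against the rate at which segments with fixed basepoints straighten onto $v$ and $w$ --- equivalently against $d(p, \gamma_n p)$ and the transversal displacements $d(\gamma_n^{-1}w(0), \mathrm{im}(v))$ and $d(\gamma_n v(0), \mathrm{im}(w))$ --- and not merely against convergence of points in $\bar X$, which is all your proof controls.
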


Thus Eberlein observed (see \cite{eb72} and \cite{eb73}) for manifolds of nonpositive curvature that $v \in SX$ is nonwandering mod $\Gamma$ if and only if $v^-$ and $v^+$ are $\Gamma$-dual.  This fact holds for proper, geodesically complete CAT($0$) spaces as well (%
see the discussion preceding Corollary III.1.4 in \cite{ballmann}).

\begin{cor}\label{nonwandering iff dual endpoints}
Suppose $X$ is geodesically complete.  The geodesic $v \in SX$ is nonwandering mod $\Gamma$ if and only if $v^-$ and $v^+$ are $\Gamma$-dual.
\end{cor}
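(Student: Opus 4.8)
The plan is to establish the two implications separately: the forward implication (nonwandering $\Rightarrow$ dual endpoints) by tracking footpoints out to $\bd X$, and the reverse implication (dual endpoints $\Rightarrow$ nonwandering) by a direct application of \lemref{duality-prop}. Throughout I fix the basepoint $x = v(0)$ and recall that $v$ is nonwandering mod $\Gamma$ precisely when there exist sequences $v_n \to v$ in $SX$, $t_n \to +\infty$, and $\gamma_n \in \Gamma$ with $\gamma_n g^{t_n} v_n \to v$ (this is the projection being nonwandering in $\modG{SX}$).

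For nonwandering $\Rightarrow$ dual, take such sequences and set $u_n = \gamma_n g^{t_n} v_n \to v$. The geometric engine is the elementary observation that if $w_n \to w$ in $SX$ and $s_n \to +\infty$, then $w_n(s_n) \to w^+$ in $\bar X$ (and $w_n(-s_n) \to w^-$): the geodesic segment from $w_n(0)$ to $w_n(s_n)$ is the initial arc $w_n|_{[0, s_n]}$, these arcs converge uniformly on compacta to $w|_{[0, \infty)}$ while $d(x, w_n(s_n)) \to \infty$, so $w_n(s_n) \to w^+$ in the cone topology, and the backward statement follows by applying this to $\flip w_n$. Feeding $v_n$ into this gives $v_n(t_n) \to v^+$, and feeding $u_n$ into it gives $u_n(-t_n) \to v^-$. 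Since $u_n(0) = \gamma_n v_n(t_n) \to v(0) = x$, we get $d(v_n(t_n), \gamma_n^{-1} x) = d(\gamma_n v_n(t_n), x) \to 0$, and therefore $\gamma_n^{-1} x \to v^+$; symmetrically, $u_n(-t_n) = \gamma_n v_n(0)$ with $v_n(0) \to x$ gives $d(\gamma_n x, \gamma_n v_n(0)) = d(x, v_n(0)) \to 0$, and therefore $\gamma_n x \to v^-$. Hence $\gamma_n x \to v^-$ and $\gamma_n^{-1} x \to v^+$, which is exactly the assertion that $v^-$ and $v^+$ are $\Gamma$-dual.

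For dual $\Rightarrow$ nonwandering, assume $v^-$ and $v^+$ are $\Gamma$-dual; by symmetry of duality this reads $v^+ \in \Dualset(v^-)$. I would apply \lemref{duality-prop} with both of its geodesics taken equal to $v$, so that its hypothesis $v^+ \in \Dualset(w^-)$ becomes $v^+ \in \Dualset(v^-)$; it yields $v_n \to v$, $\gamma_n \in \Gamma$, and $t_n \in \R$ with $\gamma_n g^{t_n} v_n \to v$, which is the data certifying that $v$ is nonwandering mod $\Gamma$ provided the times can be taken to diverge. That caveat is the main obstacle: \defref{rec def} requires $t_n \to +\infty$, whereas \lemref{duality-prop} only records $t_n \in \R$. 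I would resolve it either by observing that the times produced in the proof of \lemref{duality-prop} are comparable to $d(x, \gamma_n x)$, which tend to $+\infty$ because $\gamma_n^{-1} x \to v^+ \in \bd X$, or by arguing separately that a bounded subsequence of $(t_n)$ would, via the proper discontinuity of the $\Gamma$-action on $SX$ (\lemref{SX is proper}), force $\gamma_n$ to be eventually constant and $v$ to be a periodic geodesic, which is manifestly nonwandering. Besides this, the only nontrivial step is the cone-topology convergence used in the forward direction.
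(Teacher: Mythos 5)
Your proof is correct, and it is worth saying up front that the paper does not actually prove this corollary: it justifies it only by citing Eberlein and the discussion preceding Corollary III.1.4 in Ballmann's book, the intended reading being that the direction (dual $\Rightarrow$ nonwandering) is precisely \lemref{duality-prop} applied with $w = v$. Your argument supplies what the paper delegates to the literature, and along the intended route. The forward direction is the standard tracking argument; its two geometric ingredients (if $w_n \to w$ in $SX$ and $s_n \to +\infty$ then $w_n(s_n) \to w^+$ in $\bar X$, and two sequences a bounded distance apart have the same boundary limit) are correct consequences of convexity, modulo the small remark that your segments $w_n|_{[0,s_n]}$ emanate from the moving basepoints $w_n(0) \to w(0)$ rather than a fixed one, which convexity also handles. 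You were also right to read ``nonwandering mod $\Gamma$'' as requiring $v_n \to v$: \defref{nw def} is loosely worded, but the note following it, equating the notion with nonwandering of the projection in $\modG{SX}$, pins down your interpretation. Most importantly, you correctly spotted that the paper's restatement of \lemref{duality-prop} records only $t_n \in \R$, whereas the corollary needs $t_n \to +\infty$. Your first resolution is the right one: Ballmann's Lemma III.1.1 does conclude $t_n \to \infty$ (the times in its construction are comparable to $d(x,\gamma_n x) \to \infty$), and the paper itself silently uses this stronger form, invoking \lemref{duality-prop} with ``$t_n \to +\infty$'' in the proof of \thmref{a.e.-geodesic-is-lonely}.

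One caution: your fallback resolution does not work as stated. If $(t_n)$ is bounded, proper discontinuity does force, after passing to a subsequence, $\gamma_n = \gamma$ constant and $t_n \to t_0$ with $\gamma g^{t_0} v = v$; but this does not make $v$ a periodic geodesic. When $t_0 = 0$ it says only that $\gamma$ fixes the image of $v$ pointwise, and $\gamma$ may even be the identity: indeed the weakened lemma, read literally, is satisfied trivially by $v_n = v$, $\gamma_n = \id$, $t_n = 0$, from which no argument can extract nonwandering. So the bounded-time branch of your dichotomy cannot rescue the literal statement, and the corollary genuinely requires the strengthened form of \lemref{duality-prop}, i.e., your first resolution. (A smaller point in the same vein: if $t_n \to -\infty$ along a subsequence, the data is fine---apply $\gamma_n^{-1} g^{-t_n}$ to $\gamma_n g^{t_n} v_n$ and swap the roles of the two sequences, since forward and backward nonwandering coincide; boundedness is the only problematic case.)
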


We recall the situation for rank one CAT($0$) spaces (cf.~ \propref{rank one summary} and \propref{rank one summary 3}):

\begin{prop}\label{rank one summary 2}
Let $\Gamma$ be a group acting properly discontinuously, cocompactly, and isometrically on a proper, geodesically complete \textnormal{CAT($0$)} space $X$.  Suppose $X$ contains a rank one geodesic.  The following are equivalent:
\begin{enumerate}
\item \label{ros2 cond: some rank one axis}
$X$ has a rank one axis.
\item \label{ros2 cond: rank one axes are weakly dense}
The rank one axes of $X$ are weakly dense in $\Reg$.
\item \label{ros2 cond: some rank one geodesic is nonwandering}
Some rank one geodesic of $X$ is nonwandering mod $\Gamma$.
\item \label{ros2 cond: every geodesic is nonwandering}
Every geodesic of $X$ is nonwandering mod $\Gamma$.
\item \label{ros2 cond: dense strong recurrence}
The strongly $\Gamma$-recurrent geodesics of $X$ are dense in $SX$.
\item \label{ros2 cond: some dense orbit in SX mod Gamma}
$SX$ has a strongly dense orbit mod $\Gamma$.
\end{enumerate}
\end{prop}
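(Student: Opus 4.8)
The plan is to run two overlapping cycles through condition \itemref{ros2 cond: some rank one axis}, exploiting the dictionary between $\Gamma$-duality and nonwandering behaviour. Throughout I use that $\modG{SX}$ is compact and carries the geodesic flow $g^t_\Gamma$ (\lemref{SX is proper}), and that $\Reg$ is a nonempty open subset of $SX$ (nonempty by hypothesis, open by \lemref{blem}). First, the equivalence \itemref{ros2 cond: some rank one axis} $\Leftrightarrow$ \itemref{ros2 cond: every geodesic is nonwandering} is essentially free: by \propref{rank one summary}, \itemref{ros2 cond: some rank one axis} is equivalent to Chen and Eberlein's duality condition, i.e.\ to the assertion that $v^-$ and $v^+$ are $\Gamma$-dual for \emph{every} geodesic $v$; and by \corref{nonwandering iff dual endpoints} this is precisely the statement that every geodesic is nonwandering mod $\Gamma$. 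The implication \itemref{ros2 cond: every geodesic is nonwandering} $\Rightarrow$ \itemref{ros2 cond: some rank one geodesic is nonwandering} is trivial, since $X$ contains a rank one geodesic.

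The heart of the argument is \itemref{ros2 cond: some rank one geodesic is nonwandering} $\Rightarrow$ \itemref{ros2 cond: some rank one axis}. Given a nonwandering rank one geodesic $v$, \corref{nonwandering iff dual endpoints} shows $v^-$ and $v^+$ are $\Gamma$-dual, so there is a sequence $\gamma_n \in \Gamma$ with $\gamma_n x \to v^+$ and $\gamma_n^{-1} x \to v^-$. Since $v$ fails to bound a flat strip of some width $R > 0$, \lemref{blem} supplies neighborhoods $U \ni v^-$ and $V \ni v^+$ in $\overline X$ any two of whose points are joined by a uniformly rank one geodesic. I would then show, exactly as in the proof of Theorem III.3.4 of \cite{ballmann}, that the orbit convergence upgrades for large $n$ to genuine north--south dynamics $\gamma_n(\overline X \setminus U) \subset V$ and $\gamma_n^{-1}(\overline X \setminus V) \subset U$; ping-pong (as in \lemref{ping-pong}) then produces an attracting fixed point $\eta_n \in V$ and a repelling fixed point $\xi_n \in U$, which are joined by a rank one geodesic by \lemref{blem}, and this geodesic is a $\gamma_n$-axis, hence a rank one axis. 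I expect this upgrade from orbit convergence to uniform contraction, resting on the local product structure of \lemref{blem}, to be the main obstacle; it is the one place where the rank one hypothesis does real work. The same construction, applied to an arbitrary rank one geodesic $v$ (whose endpoints are $\Gamma$-dual once \itemref{ros2 cond: some rank one axis}, equivalently total duality, holds), yields rank one axes $a_n$ with $a_n^\pm \to v^\pm$; reparametrizing each $a_n$ by the flow to match the remaining Busemann coordinate gives $\pi_x(a_n) \to \pi_x(v)$, which is weak convergence in the sense of \defref{weak-def} and hence \itemref{ros2 cond: rank one axes are weakly dense}. Conversely, \itemref{ros2 cond: rank one axes are weakly dense} $\Rightarrow$ \itemref{ros2 cond: some rank one geodesic is nonwandering} is immediate: weak density of axes in the nonempty set $\Reg$ produces a rank one axis, which is periodic under $g^t_\Gamma$, hence $\Gamma$-recurrent, hence nonwandering.

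It remains to attach \itemref{ros2 cond: dense strong recurrence} and \itemref{ros2 cond: some dense orbit in SX mod Gamma}. For \itemref{ros2 cond: some rank one axis} $\Rightarrow$ \itemref{ros2 cond: some dense orbit in SX mod Gamma}, I would first show that total duality makes $g^t_\Gamma$ topologically transitive on the compact space $\modG{SX}$: given nonempty open $U, V \subseteq SX$, choose $v \in U$ and $w \in V$; since $v^+$ and $w^-$ are $\Gamma$-dual, \lemref{duality-prop} produces $v_n \to v$ and $\gamma_n g^{t_n} v_n \to w$, so for large $n$ the flow carries the image of $U$ in $\modG{SX}$ into that of $V$. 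Topological transitivity of a flow on a compact metric space yields a dense $G_\delta$ of points with dense orbit, giving \itemref{ros2 cond: some dense orbit in SX mod Gamma}; moreover, as in Eberlein's framework, such a transitive point $u$ satisfies $\omega(u) = \modG{SX}$, so $u$ is itself recurrent, and density of the transitive points then yields density of strongly recurrent geodesics, which is \itemref{ros2 cond: dense strong recurrence}. To close both cycles, I observe that each of \itemref{ros2 cond: dense strong recurrence} and \itemref{ros2 cond: some dense orbit in SX mod Gamma} implies \itemref{ros2 cond: some rank one geodesic is nonwandering}: a dense set of strongly recurrent geodesics must meet the nonempty open set $\Reg$, and a recurrent geodesic is nonwandering; likewise a dense orbit meets $\Reg$, and every point of a dense orbit in the compact space $\modG{SX}$ is nonwandering. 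Assembling these implications shows all six conditions equivalent.
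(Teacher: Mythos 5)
Your architecture largely parallels the paper's: (1) $\iff$ (4) via \thmref{rank one summary} together with \corref{nonwandering iff dual endpoints}; the axis-producing ping-pong argument for (3) $\implies$ (1) and (1) $\implies$ (2) (which the paper simply cites, as Lemma III.3.2 of Ballmann); and the duality-plus-Baire proof of topological transitivity for (1) $\implies$ (6) (the paper cites Theorem III.2.4 of Ballmann). But there is a genuine gap where condition (6) is used as a \emph{hypothesis}. You close that cycle with the claims that a transitive point $u$ satisfies $\omega(u) = \modG{SX}$, hence is recurrent, and that ``every point of a dense orbit in the compact space $\modG{SX}$ is nonwandering.'' Neither is true for general flows on compact metric spaces: take $Z = [0,1]$ with the flow of $\dot x = x(1-x)$; every $x \in (0,1)$ has a dense (two-sided) orbit, yet $\omega(x) = \{1\}$ and $x$ is wandering. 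The point is that a dense full orbit does not yield a dense forward orbit, and condition (6) only hands you the former. (In the direction (1) $\implies$ (5)/(6) your argument is repairable, since duality gives transitivity with times $t_n \to +\infty$, so Birkhoff's argument produces a dense $G_\delta$ of points with dense \emph{forward} orbits, and those are indeed forward recurrent.)

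Concretely, a wandering point with dense orbit forces an open subset of $\modG{SX}$ to lie inside a compact orbit arc, and ruling this out requires geometric input, not compactness alone. This is exactly what the paper's proof of (6) $\implies$ (1) supplies: arguing by contradiction, if (1) fails then by \thmref{rank one summary} no point of $\bd X$ is Tits-isolated, so the endpoint pair $(v^-, v^+)$ of the dense-orbit geodesic $v$ (which lies in $\Reg$ by \lemref{blem}) is not isolated in $\GE$; this non-isolation is what forces $v$ to be strongly $\Gamma$-recurrent, whence its dense orbit consists of recurrent geodesics, giving (5) and hence (1). You need an argument of this kind --- one exploiting the dichotomy ``either a rank one axis exists, or no boundary point is Tits-isolated'' --- in place of the false dynamical principle; without it, the implication out of (6) does not close, and the six conditions are only shown equivalent for conditions (1)--(5).
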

\begin{proof}
\itemrefstar{ros2 cond: rank one axes are weakly dense}%
$\implies$%
\itemrefstar{ros2 cond: some rank one axis}
and
\itemrefstar{ros2 cond: every geodesic is nonwandering}%
$\implies$%
\itemrefstar{ros2 cond: some rank one geodesic is nonwandering}
are immediate.  By \propref{rank one summary}, $X$ has a rank one axis if and only if Chen and Eberlein's duality condition holds, so
\itemrefstar{ros2 cond: some rank one axis}%
$\iff$%
\itemrefstar{ros2 cond: every geodesic is nonwandering}%
$\iff$%
\itemrefstar{ros2 cond: dense strong recurrence}
by Corollaries III.1.4 and II.1.5 of \cite{ballmann}, and
\itemrefstar{ros2 cond: some rank one axis}%
$\implies$%
\itemrefstar{ros2 cond: some dense orbit in SX mod Gamma}
by Theorem III.2.4 of \cite{ballmann}.  By Lemma III.3.2 of \cite{ballmann}, every rank one geodesic that is nonwandering mod $\Gamma$ is a weak limit of rank one axes; this proves
\itemrefstar{ros2 cond: some rank one geodesic is nonwandering}%
$\implies$%
\itemrefstar{ros2 cond: some rank one axis}
and
\itemrefstar{ros2 cond: every geodesic is nonwandering}%
$\implies$%
\itemrefstar{ros2 cond: rank one axes are weakly dense}.

We now prove
\itemrefstar{ros2 cond: some dense orbit in SX mod Gamma}%
$\implies$%
\itemrefstar{ros2 cond: dense strong recurrence}.
Let $v \in SX$ have dense orbit mod $\Gamma$; by \lemref{blem}, $v \in \Reg$.  If
\itemrefstar{ros2 cond: some rank one axis}
fails, then $v^+$ cannot be isolated in the Tits metric on $\bd X$ by \propref{rank one summary}.  Hence $(v^-, v^+)$ cannot be isolated in $\GE$, so $v$ must be strongly $\Gamma$-recurrent; this concludes the proof.
\end{proof}

Similar statements to those of \propref{rank one summary} and \propref{rank one summary 2} can be made for rank one, non-elementary actions.  We mention three that we will use later.

\begin{prop}[Main Theorem in \cite{ham09}]
\label{hamenstadt}
Let $X$ be a proper \textnormal{CAT($0$)} space under a proper, non-elementary, isometric action by a group $\Gamma$ with a rank one element.  All the following hold:
\begin{enumerate}
\item
$\Lam$ is perfect, and $\Gamma$ acts minimally on $\Lam$.
\item
$\SXL$ has a weakly dense orbit mod $\Gamma$.
\item
The rank one axes are weakly dense in $\SXL$.
\end{enumerate}
\end{prop}

We will work mainly with $\Gamma$-recurrence.  The following standard result illustrates the power of $\Gamma$-recurrence.

\begin{lemma}\label{strong rec}
Let $v \in SX$ be a $\Gamma$-recurrent geodesic.  Then every $w \in SX$ with $w^+ = v^+$ $\Gamma$-accumulates on a geodesic parallel to $v$.
\end{lemma}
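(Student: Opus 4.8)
The plan is to transport the recurrence of $v$ directly onto $w$. Since $v$ is $\Gamma$-recurrent, fix sequences $t_n \to +\infty$ and $\gamma_n \in \Gamma$ with $\gamma_n g^{t_n} v \to v$ strongly in $SX$. I would use the \emph{same} sequences on $w$: consider $\gamma_n g^{t_n} w$, extract a strong subsequential limit $u$, and show $u \parallel v$. Because $t_n \to +\infty$, this exhibits $w$ as $\Gamma$-accumulating on $u$ in the sense of \defref{rec def}, which is exactly the conclusion.

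The engine of the argument is that $w$ and $v$ are forward asymptotic. Since $w^+ = v^+$, the ray $v\res{[0,\infty)}$ is the unique ray from $v(0)$ asymptotic to $w\res{[0,\infty)}$, so by convexity of the distance function $s \mapsto d(w(s), v(s))$ is non-increasing on $[0,\infty)$; in particular it is bounded there by $D_0 := d(w(0), v(0))$. First I would use this to control footpoints: since $\gamma_n v(t_n) = (\gamma_n g^{t_n} v)(0) \to v(0)$ and $d(\gamma_n w(t_n), \gamma_n v(t_n)) = d(w(t_n), v(t_n)) \le D_0$, the footpoints $(\gamma_n g^{t_n} w)(0) = \gamma_n w(t_n)$ lie in a bounded, hence relatively compact, subset of $X$. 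The geodesics $\gamma_n g^{t_n} w$ are $1$-Lipschitz with footpoints in a compact set, so by the Arzel\`a--Ascoli theorem a subsequence converges strongly to some $u \in SX$; relabel so that $\gamma_n g^{t_n} w \to u$.

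It remains to prove $u \parallel v$, and here the forward flow is what makes the bound global. For any fixed $t \in \R$, continuity of the distance and of the flow give
\[
d(u(t), v(t)) = \lim_{n \to \infty} d\bigl( (\gamma_n g^{t_n} w)(t),\, (\gamma_n g^{t_n} v)(t) \bigr) = \lim_{n \to \infty} d\bigl( w(t_n + t),\, v(t_n + t) \bigr),
\]
the last equality because $\gamma_n$ is an isometry. Since $t_n \to +\infty$, we have $t_n + t \ge 0$ for all large $n$, so each term is $\le D_0$; hence $\sup_{t \in \R} d(u(t), v(t)) \le D_0 < \infty$. A bounded convex function on $\R$ is constant, so $u$ and $v$ stay at constant distance and therefore bound a flat strip by the Flat Strip Theorem, i.e.\ $u \parallel v$. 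Thus $\gamma_n g^{t_n} w \to u$ strongly with $t_n \to +\infty$ and $u \parallel v$, which is the claim.

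The step I expect to require the most care is the boundedness input $d(w(s), v(s)) \le D_0$ on $[0,\infty)$: one must invoke that two unit-speed rays to a common boundary point are asymptotic with non-increasing distance (a standard CAT($0$) fact following from convexity of $t \mapsto d(w(t), v(t))$ together with the uniqueness of the asymptotic ray from a given basepoint). Everything else---the footpoint bound, the Arzel\`a--Ascoli extraction, and the passage of the distance bound to the limit---is routine once this is in hand. The one subtlety worth flagging is that the resulting bound on $d(u, v)$ holds even at very negative $t$, despite $D_0$ coming only from the \emph{forward} rays: this is precisely because flowing by $t_n \to +\infty$ pushes every fixed time into the forward range before taking the limit.
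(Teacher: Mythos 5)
Your proposal is correct and follows essentially the same route as the paper's proof: the same recurrence sequences $(\gamma_n, t_n)$ applied to $w$, the convexity/asymptoticity bound on $d(v(t), w(t))$ for $t \ge 0$ to get footpoint boundedness, Arzel\`a--Ascoli extraction of $u$, and convexity again to see that $d(u(\cdot), v(\cdot))$ is constant, hence $u \parallel v$. The only cosmetic difference is the last step: the paper identifies $d(v(s),u(s))$ with the $s$-independent limit $\lim_{t\to\infty} d(g^t v, g^t w)$, while you bound the distance uniformly and invoke that a bounded convex function on $\R$ is constant---the same convexity argument in different packaging.
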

\begin{proof}
Since $v$ is $\Gamma$-recurrent, there exist sequences $t_n \to +\infty$ and $\gamma_n \in \Gamma$ such that $\gamma_n g^{t_n} (v) \to v$.  So suppose $w \in SX$ has $w^+ = v^+$.  Since $w^+ = v^+$, the function $t \mapsto d(g^t v, g^t w)$ is bounded on $t \ge 0$ by convexity, hence $\set{\gamma_n g^{t_n} w(0)}$ is bounded, and passing to a subsequence we may assume that $\gamma_n g^{t_n} (w) \to u \in SX$.  But then
\[d(v(s), u(s)) = \lim_{n \to \infty} d(\gamma_n g^{t_n} v(s), \gamma_n g^{t_n} w(s)) = \lim_{t \to \infty} d(g^t v(s), g^t w(s))\]
is independent of $s \in \R$, and thus $u$ is parallel to $v$.
\end{proof}

Inspecting the proof, we see that we have actually shown the following.

\begin{lemma}\label{strong acc}
Suppose $v, w \in SX$ have $v^+ = w^+$.  If $v$ $\Gamma$-accumulates on $u \in SX$, then $w$ must $\Gamma$-accumulate on a geodesic parallel to $u$.
\end{lemma}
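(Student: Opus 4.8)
The plan is to re-run the argument in the proof of \lemref{strong rec} almost verbatim, since that proof never really used that the accumulation point of $v$ was $v$ itself---only that $v$ accumulated on \emph{some} geodesic. So I would begin by unpacking the hypothesis: $v$ $\Gamma$-accumulating on $u$ means there exist sequences $t_n \to +\infty$ and $\gamma_n \in \Gamma$ with $\gamma_n g^{t_n}(v) \to u$. The goal is then to extract a subsequential limit of $\gamma_n g^{t_n}(w)$ and show it is parallel to $u$.

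First I would exploit $v^+ = w^+$. Because $v$ and $w$ share a forward endpoint, the function $f(t) = d(v(t), w(t))$ is convex (two geodesics in a CAT($0$) space) and bounded on $[0, \infty)$ (asymptotic rays stay a bounded distance apart); a bounded convex function on a half-line is nonincreasing, so $f$ decreases to a finite limit $L$ as $t \to +\infty$. In particular $f(t_n) \le f(0)$ is uniformly bounded. Since $\gamma_n$ is an isometry and $\gamma_n g^{t_n} v(0) \to u(0)$, the footpoints $\gamma_n g^{t_n} w(0)$ lie within distance $f(0)$ of the convergent sequence $\gamma_n g^{t_n} v(0)$, hence remain in a compact set. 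By the Arzel\`a--Ascoli Theorem (using that $SX$ is proper, \lemref{SX is proper}), I may pass to a subsequence along which $\gamma_n g^{t_n}(w) \to u'$ for some $u' \in SX$.

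It remains to check that $u'$ is parallel to $u$, and here I would carry out the same distance computation as in \lemref{strong rec}: for each fixed $s \in \R$,
\[
d(u(s), u'(s)) = \lim_{n \to \infty} d(\gamma_n g^{t_n} v(s), \gamma_n g^{t_n} w(s)) = \lim_{n \to \infty} d(v(s + t_n), w(s + t_n)) = L,
\]
where the first equality is continuity of $d$ together with $\gamma_n g^{t_n} v \to u$ and $\gamma_n g^{t_n} w \to u'$, the middle equality uses that $\gamma_n$ is an isometry and $g^{t_n}$ shifts the parameter, and the last equality uses $t_n \to +\infty$ together with the existence of the limit $L = \lim_{\tau \to \infty} f(\tau)$. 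Since $d(u(s), u'(s)) = L$ is independent of $s$, the geodesics $u$ and $u'$ stay a constant distance apart, so $u'$ is parallel to $u$. As $t_n \to +\infty$ and $\gamma_n g^{t_n}(w) \to u'$, the geodesic $w$ $\Gamma$-accumulates on $u'$, which completes the proof.

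The main (and essentially only) subtlety is the step asserting $\lim_{n} d(v(s + t_n), w(s + t_n)) = L$ \emph{independently of} $s$: this is exactly where the monotone convergence of the asymptotic-ray distance, forced by convexity together with boundedness, is indispensable---without knowing $f$ is eventually monotone one could not conclude the limit is the same for every $s$, and hence could not conclude $u'$ is parallel to $u$. Everything else is routine bookkeeping: the isometry invariance of $d$, the properness of $SX$ for the subsequence extraction, and the fact that two complete geodesics at constant distance are precisely parallel.
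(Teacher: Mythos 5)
Your proof is correct and is essentially the paper's own argument: the paper proves this lemma simply by observing that the proof of \lemref{strong rec} never used that $v$ accumulates on itself (only that $\gamma_n g^{t_n}(v)$ converges to \emph{some} geodesic), and your write-up is exactly that proof re-run with $u$ in place of $v$ as the limit. The convexity-plus-boundedness justification you highlight for the existence of the limit $L$, which makes the constant $d(u(s),u'(s))$ independent of $s$, is precisely the point implicit in the paper's computation.
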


We will need to deal with weak $\Gamma$-recurrence, so we revisit \lemref{strong rec}.

\begin{lemma}\label{weak rec}
Let $v \in \Reg$ be a weakly $\Gamma$-recurrent geodesic.  Then every $w \in SX$ with $w^+ = v^+$ strongly $\Gamma$-accumulates on a geodesic $u \sim v$.
\end{lemma}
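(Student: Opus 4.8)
The plan is to reduce to the strong-recurrence arguments behind \lemref{strong rec} and \lemref{strong acc}: first use \lemref{quotient} to upgrade the weak recurrence of $v$ to genuine strong convergence of a sequence of translates, then run the convexity argument for $w$, and finally correct a leftover horospherical shift by flowing. Since $v$ is weakly $\Gamma$-recurrent, I would choose $t_n \to +\infty$ and $\gamma_n \in \Gamma$ with $\gamma_n g^{t_n} v \to v$ weakly. Because $v \in \Reg$, \lemref{quotient} lets me pass to a subsequence along which $\gamma_n g^{t_n} v$ converges \emph{strongly} to some $u_0 \sim v$; in particular the footpoints $\gamma_n g^{t_n} v(0) \to u_0(0)$ stay in a bounded set. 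This is exactly where rank one and weak recurrence are exploited: together they turn weak convergence of the translates of $v$ into strong convergence.

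Next I would bring in $w$. Since $v^+ = w^+$, the convex function $\tau \mapsto d(v(\tau), w(\tau))$ is bounded on $[0,\infty)$, hence non-increasing there, with limit $\ell := \lim_{\tau \to \infty} d(v(\tau), w(\tau))$. Then $d\bigl(\gamma_n g^{t_n} w(0), \gamma_n g^{t_n} v(0)\bigr) = d(w(t_n), v(t_n)) \le d(w(0), v(0))$, so boundedness of the $v$-footpoints forces the $w$-footpoints $\gamma_n g^{t_n} w(0)$ to be bounded as well. By the Arzel\`a--Ascoli Theorem I pass to a further subsequence so that $\gamma_n g^{t_n} w \to u$ strongly. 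For each fixed $s$ one has $d(u(s), u_0(s)) = \lim_n d(w(s+t_n), v(s+t_n)) = \ell$, so $u$ is parallel to $u_0$, and hence to $v$; in particular $u^- = v^-$ and $u^+ = v^+$.

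The crux is to improve $u \parallel v$ to $u \sim v$, i.e.\ to synchronize footpoints. Here I would use the $\Gamma$-invariance of Busemann functions together with the fact that $(\gamma_n g^{t_n} v)^+ = (\gamma_n g^{t_n} w)^+ = \gamma_n v^+$, computing $b_{v^+}(u(0), v(0)) = \lim_n b_{v^+}(w(t_n), v(t_n))$ via continuity and the strong limits; because $w^+ = v^+$, the quantity $b_{v^+}(w(\tau), v(\tau))$ is in fact independent of $\tau$, equal to $c := b_{v^+}(w(0), v(0))$. Thus $u$ differs from a $\sim$-representative of $v$ only by a single flow time: the geodesic $g^c u$ satisfies $(g^c u)^\pm = v^\pm$ and $b_{v^+}\bigl((g^c u)(0), v(0)\bigr) = 0$, so $g^c u \in H^s(v)$ with $(g^c u)^- = v^-$, whence $g^c u \sim v$ by \propref{equivalent-geodesics}. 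Finally $\gamma_n g^{t_n + c} w = g^c\bigl(\gamma_n g^{t_n} w\bigr) \to g^c u$ with $t_n + c \to +\infty$, so $w$ strongly $\Gamma$-accumulates on $g^c u \sim v$, as claimed.

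The one genuinely new feature compared with \lemref{strong acc} is that weak recurrence produces only $u \parallel v$ rather than $u \sim v$; I expect the main obstacle to be the bookkeeping showing that the resulting horospherical offset $c$ is \emph{constant} along the sequence (so that a single flow time repairs the synchronization). Everything else is a routine adaptation of the convexity and boundedness estimates already used in \lemref{strong rec}.
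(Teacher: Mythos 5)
Your proof is correct and takes essentially the same route as the paper's, which is just three lines: apply \lemref{quotient} to get strong $\Gamma$-accumulation of $v$ on some $u_0 \sim v$, apply \lemref{strong acc} to conclude $w$ strongly $\Gamma$-accumulates on some $u \parallel u_0$, and then replace $u$ by $g^c u$ for a suitable $c$ so that $g^c u \sim u_0 \sim v$. The only difference is presentational: you re-derive \lemref{strong acc} inline via the convexity/Arzel\`a--Ascoli argument and compute the flow offset $c$ explicitly with Busemann functions, whereas the paper simply cites that lemma and asserts the existence of $c$ (via \propref{equivalent-geodesics}), so your extra bookkeeping, while correct, is not actually needed.
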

\begin{proof}
By \lemref{quotient}, $v$ strongly $\Gamma$-accumulates on some $u \sim v$.  By \lemref{strong acc}, $w$ must strongly $\Gamma$-accumulate on some $u' \parallel u$.  But $g^t u' \sim u$ for some $t \in \R$, so we may assume $u' \sim u$.
\end{proof}

Since convergence preserves distances between all geodesics $w' \parallel w \in H^s (v)$, by passing to a subsequence we expect convergence of $X_w$ to an isometric embedding into $X_v$.  This is shown in the following lemma.

\begin{lemma}\label{strong acc transversals}
Suppose $w \in SX$ strongly $\Gamma$-accumulates on $v \in SX$.  Then there are isometric embeddings $X_w \into X_v$ and $Y_w \into Y_v$, each of which maps $w(0) \mapsto v(0)$.
\end{lemma}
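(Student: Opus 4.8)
The plan is to realize both embeddings simultaneously as a limit of the isometric embeddings $X_w \into X$ induced by the accumulation data. Write the hypothesis as $w_n := \gamma_n g^{t_n} w \to v$ strongly, with $t_n \to +\infty$ and $\gamma_n \in \Gamma$. Using the canonical splitting $X_w = Y_w \times \R$, in which $w$ is the line $\set{y_0} \times \R$ and each $y \in Y_w$ determines the synchronized parallel geodesic $u_y(s) = (y,s)$ (so that $u_y \sim w$ by \propref{equivalent-geodesics} and $d(u_y(s), u_{y'}(s)) = d_{Y_w}(y, y')$ is constant in $s$), I set $\psi_n := \gamma_n g^{t_n}$, viewed both as the self-map of $SX$ from the hypothesis and as the induced isometric embedding $X_w \into X$ given on points by $(y,s) \mapsto \gamma_n(y, s + t_n)$. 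Being the restriction of the isometry $\gamma_n$ precomposed with a translation of the $\R$-factor, each $\psi_n$ is an isometric embedding of $X_w$ onto $X_{w_n}$; moreover $\psi_n(y_0, s) = w_n(s)$, while the geodesic $\psi_n u_y$ is parallel to $w_n$ and satisfies $\psi_n u_y \sim w_n$, since $\gamma_n$ and the geodesic flow both preserve $\sim$.

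The heart of the argument is to pass to a coherent limit of the $\psi_n$. For fixed $y$, the basepoint $\psi_n u_y(0)$ stays at the constant distance $d_{Y_w}(y, y_0)$ from $w_n(0) \to v(0)$, so it is bounded; by the Arzel\`a--Ascoli Theorem and a diagonal argument over a countable dense subset $D \subseteq Y_w$, I may pass to a subsequence along which $\psi_n u_y \to \bar u_y$ strongly in $SX$ for every $y \in D$. Taking limits in $d(\psi_n u_y(s), w_n(s)) = d_{Y_w}(y, y_0)$ shows $\bar u_y \parallel v$, and taking limits in $\pi_x(\psi_n u_y) = \pi_x(w_n)$ (valid since $\psi_n u_y \sim w_n$), using continuity of $\pi_x$ together with $w_n \to v$, gives $\pi_x(\bar u_y) = \pi_x(v)$, i.e.\ $\bar u_y \sim v$ by \propref{equivalent-geodesics}. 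Thus $\bar u_y(0) \in Y_v$, and from $d(\psi_n u_y(s), \psi_n u_{y'}(s)) = d_{Y_w}(y, y')$ I obtain $d(\bar u_y(0), \bar u_{y'}(0)) = d_{Y_w}(y, y')$ for all $y, y' \in D$.

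Consequently $y \mapsto \bar u_y(0)$ is a distance-preserving map $D \to Y_v$; since $Y_w$ and $Y_v$ are closed and convex in the proper space $X$, hence complete, it extends uniquely to an isometric embedding $\Phi \colon Y_w \into Y_v$. Because $\Phi \times \id$ then matches the product-metric splittings $X_w = Y_w \times \R$ and $X_v = Y_v \times \R$, it furnishes the desired isometric embedding $X_w \into X_v$ restricting to $Y_w \into Y_v$ on the transversals. I expect the one genuinely delicate point to be obtaining a single subsequence that works across the whole (possibly infinite-dimensional) transversal while guaranteeing that the limit geodesics land in the correct slice $Y_v$ rather than a parallel one; the diagonal argument handles the former, and the continuity of $\pi_x$ together with the flow- and $\Gamma$-invariance of $\sim$ handles the latter.
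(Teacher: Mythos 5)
Your proof is correct and takes essentially the same route as the paper: both extract, via the Arzel\`a--Ascoli theorem, a limit of the natural isometries $Y_w \to \gamma_n Y_{g^{t_n} w}$ induced by the accumulating sequence, obtain an isometric embedding of $Y_w$ into $Y_v$, and extend over the $\R$-factor to get $X_w \into X_v$. Your write-up simply spells out the step the paper compresses---guaranteeing the limit lands in the slice $Y_v$ rather than a parallel copy---which you handle cleanly via continuity of $\pi_x$ and the invariance of $\sim$ under $\Gamma$ and the geodesic flow.
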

\begin{proof}
Let $(t_n, \gamma_n) \subset \R \times \Gamma$ be a sequence such that $\gamma_n g^{t_n} w \to v$ in $SX$.  Then, in particular, $\gamma_n g^{t_n} w(0) \to v(0)$ in $X$.  So by the Arzel\`a-Ascoli Theorem, we may pass to a further subsequence such that the natural isometries $Y_w \to \gamma_n Y_{g^{t_n} w}$ converge uniformly to an isometric embedding $\varphi$ of $Y_w$ into $X$.  Since $\gamma_n g^{t_n} (w) \to v$, the map $\varphi$ must extend to an isometric embedding of $X_w$ into $X_v$.  But $\varphi$ must also isometrically embed $Y_w$ into $Y_v$ because $\gamma_n g^{t_n} w(0) \to v(0)$.
\end{proof}

\begin{cor}\label{weak rec transversals}
Let $v \in \Reg$ be weakly $\Gamma$-recurrent.  Then for every $w \in SX$ with $w^+ = v^+$, there are isometric embeddings $X_w \into X_v$ and $Y_w \into Y_v$.
\end{cor}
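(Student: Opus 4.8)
The plan is to chain the two preceding lemmas together and then reconcile the targets of the resulting embeddings. First I would invoke \lemref{weak rec}: since $v \in \Reg$ is weakly $\Gamma$-recurrent and $w^+ = v^+$, the geodesic $w$ strongly $\Gamma$-accumulates on some geodesic $u$ with $u \sim v$. This trades the weak recurrence hypothesis for an honest strong accumulation, which is precisely the input demanded by the next lemma.

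Next I would apply \lemref{strong acc transversals} to the pair $(w, u)$: because $w$ strongly $\Gamma$-accumulates on $u$, there are isometric embeddings $X_w \into X_u$ and $Y_w \into Y_u$. All that then remains is to identify the targets $X_u$ and $Y_u$ with $X_v$ and $Y_v$, after which substitution finishes the proof.

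For this identification I would unwind what $u \sim v$ means via \propref{equivalent-geodesics}, namely that $u \parallel v$ and $u(0) \in Y_v$. Since parallelism of complete geodesics is an equivalence relation, the parallel classes $P_u$ and $P_v$ coincide, and hence so do the unions of their images, giving $X_u = X_v$. Working in the canonical splitting $X_v = Y_v \times \R$, the condition $u(0) \in Y_v$ says exactly that $u(0)$ sits at the same height as $v(0)$; since $u \parallel v$, the transversal $Y_u$ (the preimage of $u(0)$ under orthogonal projection onto the image of $u$) is the slice at that height, which is $Y_v$. Thus $X_u = X_v$ and $Y_u = Y_v$, and the embeddings from the previous step become $X_w \into X_v$ and $Y_w \into Y_v$, as desired.

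The argument is essentially a bookkeeping composition of \lemref{weak rec} and \lemref{strong acc transversals}; the only step requiring genuine care is the identification $Y_u = Y_v$, where one must use that $\sim$-equivalent geodesics live at the same height in the product structure, not merely that they share a common parallel core.
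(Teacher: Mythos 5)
Your proof is correct and follows essentially the same route as the paper: invoke \lemref{weak rec} to get strong $\Gamma$-accumulation of $w$ on some $u \sim v$, apply \lemref{strong acc transversals}, and then identify the core and transversal of $u$ with those of $v$. Your added care in using $u \sim v$ (via \propref{equivalent-geodesics}) rather than mere parallelism to get the set equality $Y_u = Y_v$ is sound and in fact slightly more precise than the paper's one-line identification, though for the stated conclusion a canonical isometry $Y_u \cong Y_v$ --- which parallelism alone already provides, since any two slices of the splitting $X_v = Y_v \times \R$ are isometric --- would suffice.
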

\begin{proof}
By \lemref{weak rec}, $w$ strongly $\Gamma$-accumulates on a geodesic $u \sim v$.  Since $u \parallel v$, we have $X_u = X_v$ and $Y_u = Y_v$.  Now apply \lemref{strong acc transversals}.
\end{proof}

The proof of the next lemma combines a few standard arguments about CAT($0$) spaces.  It demonstrates that weakly $\Gamma$-recurrent rank one geodesics share some important properties with rank one axes, which have both endpoints isolated in the Tits metric.  %

\begin{lemma}\label{rec => Tits-isolated}
If $v \in \Reg$ is weakly $\Gamma$-recurrent, then $v^+$ is isolated in the Tits metric---that is, $v^+$ has infinite Tits distance to every other point in $\bd X$.
\end{lemma}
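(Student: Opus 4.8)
The plan is to argue by contradiction: suppose some $\eta \neq v^+$ has $d_T(v^+, \eta) < \infty$. Since $(\bd X, \angle)$ is a complete CAT($1$) space and $d_T$ is its induced length metric, finite Tits distance yields a Tits geodesic from $v^+$ to $\eta$; choosing a point a short distance $\theta \in (0,\pi)$ along it produces $\xi \in \bd X$ with $\angle(v^+, \xi) = d_T(v^+, \xi) = \theta$ (the two metrics agree below $\pi$ in a CAT($1$) space). A standard compactness argument, using that $X$ is proper and $\Gamma$ cocompact, shows this comparison angle is realized at some basepoint $p$: there are rays $\rho \to v^+$ and $\rho' \to \xi$ from $p$ with $\angle_p(\rho, \rho') = \theta$. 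Because the realized angle equals the Tits angle, $\rho$ and $\rho'$ bound a flat sector $S$ of angle $\theta$ (the flat sector theorem, cf.\ \cite{bridson} or \cite{ballmann}).

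Next I would extract wide flat strips pointing toward $v^+$. Modelling $S$ on the Euclidean sector $\{(r, \alpha) : r \ge 0,\ 0 \le \alpha \le \theta\}$ with $\rho$ along $\alpha = 0$, every horizontal ray inside $S$ is parallel to $\rho$, hence also asymptotic to $v^+$; consequently, for each $h > 0$ the part of $S$ at Euclidean height between $0$ and $h$ and far enough toward $v^+$ is a flat half-strip of width $h$ whose two boundary rays both end at $v^+$. Let $w \in SX$ extend $\rho$ to a complete geodesic, so that $w^+ = v^+$. Since $v \in \Reg$ is weakly $\Gamma$-recurrent, \lemref{weak rec} supplies a geodesic $u \sim v$ together with $\gamma_n \in \Gamma$ and $t_n \to +\infty$ such that $\gamma_n g^{t_n} w \to u$ strongly in $SX$.

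The heart of the argument is a blow-up. For each $n$ choose $s_n \to \infty$ with $s_n \le (t_n - s_n)\tan\theta$ (for instance $s_n = \lfloor \sqrt{t_n}\rfloor$); then the flat rectangle $Q_n = [t_n - s_n, t_n + s_n] \times [0, s_n]$ lies inside $S$, with bottom edge equal to $w([t_n - s_n, t_n + s_n])$. Applying $\gamma_n g^{t_n}$ carries $Q_n$ to a flat rectangle whose bottom edge lies on $\gamma_n g^{t_n} w$ and which extends a distance $s_n \to \infty$ along the geodesic in both directions and $s_n \to \infty$ transversally. Because $\gamma_n g^{t_n} w \to u$ and $X$ is proper, the Arzel\`a--Ascoli argument used in \lemref{strong acc transversals} lets me pass to a subsequence along which these rectangles converge to an isometric embedding of $\R \times [0, \infty)$ with boundary $u$. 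Thus $u$ bounds a flat half-plane, so $u$ is not rank one. But $u \sim v$ forces $u \parallel v$, hence $u^\pm = v^\pm$, so by \lemref{rank one > pi} the geodesic $u$ is rank one because $v$ is; this contradiction proves the lemma.

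The main obstacle is the blow-up step: one must calibrate the growth rate $s_n$ so the rectangles remain inside the sector while both dimensions diverge, and then verify the limit is a genuine flat half-plane with boundary exactly $u$. Establishing the flat sector (and the realization of a Tits angle below $\pi$ at an actual basepoint) is the other delicate point, though it is standard in the proper, cocompact CAT($0$) setting.
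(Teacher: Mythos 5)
Your overall skeleton matches the paper's: reduce to the existence of some $\xi \neq v^+$ with $d_T(v^+,\xi) < \pi$, produce a flat sector with one edge asymptotic to $v^+$, and then use recurrence to blow the sector up into a flat half-plane bounded by a geodesic parallel to $v$, contradicting rank one. Your blow-up step (the rectangles $Q_n$, the calibration $s_n \le (t_n - s_n)\tan\theta$, and the Arzel\`a--Ascoli limit along $\gamma_n g^{t_n} w \to u \sim v$ furnished by \lemref{weak rec}) is sound, and is in fact spelled out more carefully than the paper's own closing paragraph; the reduction step is also fine (though geodesics in the Tits boundary are only guaranteed below distance $\pi$, the length-metric property suffices to produce $\xi$).

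The gap is the sentence claiming that ``a standard compactness argument \dots shows this comparison angle is realized at some basepoint $p$.'' This is not a standard fact, and the natural compactness argument does not prove it for the pair $(v^+,\xi)$ you fixed. If you take basepoints $x_n$ with $\angle_{x_n}(v^+,\xi) \to \theta$ and use cocompactness to write $\gamma_n x_n \to p$, the boundary points move as well: upper semicontinuity of $(q,\zeta,\zeta') \mapsto \angle_q(\zeta,\zeta')$ together with lower semicontinuity of $\angle$ yields $\angle_p(\zeta,\zeta') = \angle(\zeta,\zeta') = \theta$ only for the limit pair $\zeta = \lim_n \gamma_n v^+$, $\zeta' = \lim_n \gamma_n \xi$, and $\lim_n \gamma_n v^+$ need not equal $v^+$ (nor need $p$ have any relation to $v$). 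Your subsequent argument requires one sector edge to be asymptotic to $v^+$ specifically---that is exactly what lets you invoke \lemref{weak rec} for $w$ with $w^+ = v^+$---so this loss is fatal as written. This is precisely the point where the paper uses the weak $\Gamma$-recurrence of $v$ a second time: it takes basepoints $p_n = v(t_n)$ on $v$ itself, where $\angle_{v(t_n)}(\xi,v^+) \to \angle(\xi,v^+)$ (the standard fact about angles along a ray toward $v^+$), and translates by the recurrence isometries, so that $\gamma_n v(t_n) \to v(0)$ and simultaneously $\gamma_n v^+ \to v^+$; the semicontinuity sandwich then gives $\angle_{v(0)}(\eta,v^+) = \angle(\eta,v^+)$ for $\eta = \lim_n \gamma_n \xi$, producing a flat sector with one edge along $v$, after which your blow-up (or the paper's) finishes. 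Without tying the compactness argument to the recurrence sequence in this way, the attainment claim is unsupported: there is no general theorem guaranteeing that Tits angles less than $\pi$ between two prescribed boundary points are realized at an interior point of a proper, cocompact CAT(0) space.
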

\begin{proof}
Let $v \in \Reg$ be weakly $\Gamma$-recurrent.  By \lemref{quotient}, there is a sequence $(t_n, \gamma_n)$ in $\R \times \Gamma$ with $t_n \to +\infty$ and $u \sim v$ such that $\gamma_n g^{t_n} (v) \to u$ strongly; note $u^+ = v^+$.  Let $p = u(0)$ and $p_n = v(t_n)$.  Suppose $\xi \in \bd X$ has $d_T (\xi, v^+) < \pi$; in particular, $\angle (\xi, v^+) < \pi$.  Passing to a subsequence, we may assume $\gamma_n \xi \to \eta \in \bd X$.  Clearly $\gamma_n p_n \to p$, hence $\angle_{p} (\eta, v^+) \ge \limsup_{n \to \infty} \angle_{\gamma_n p_n} (\gamma_n \xi, \gamma_n v^+)$ by upper semicontinuity (see \cite[Proposition 9.2(2)]{bridson}).  But $\angle_{p_n} (\xi, v^+) \to \angle (\xi, v^+)$ because $p_n = v(t_n)$ (see \cite[Proposition 9.8(2)]{bridson} or \cite[Proposition II.4.2]{ballmann}).  And $\gamma_n v^+ \to v^+$, so $\angle (\eta, v^+) \le \liminf_{n \to \infty} \angle (\gamma_n \xi, \gamma_n v^+)$ by lower semicontinuity (see \cite[Proposition 9.5(2)]{bridson} or \cite[Proposition II.4.1]{ballmann}).  Thus we have
$\angle_{p} (\eta, v^+) \ge \limsup_{n \to \infty} \angle_{\gamma_n p_n} (\gamma_n \xi, \gamma_n v^+) = \limsup_{n \to \infty} \angle_{p_n} (\xi, v^+) = \angle (\xi, v^+) = \liminf_{n \to \infty} \angle (\gamma_n \xi, \gamma_n v^+) \ge \angle (\eta, v^+)$.
But then $\angle_{p} (\eta, v^+) = \angle (\eta, v^+)$ by definition of $\angle$, and so there is a flat sector bounded by $(p, \eta, v^+)$ (see \cite[Corollary II.9.9]{bridson} or \cite[Proposition II.4.2]{ballmann}).

Now the points $p_n = v(t_n)$ lie in arbitrarily large balls of a flat half-plane bounded by the image of $v$.  By the Arzel\`a-Ascoli theorem, $p = \lim \gamma_n p_n$ lies on a full flat half-plane bounded by the image of $u = \lim \gamma_n v$.  But this contradicts the fact that $u \sim v \in \Reg$.
\end{proof}

\section{Bowen-Margulis Measures}

We now construct our first Bowen-Margulis measures.  In this section, we put them on the weak product structure $\GER$ and its quotient under $\Gamma$.  Near the end of \secref{props of B-M measures}, we will finally be able to define Bowen-Margulis measures on $SX$ and its quotient under $\Gamma$.

\begin{standing hypothesis}
In this section, let $X$ be a proper CAT($0$) space under a proper, non-elementary, isometric action by a group $\Gamma$.
\end{standing hypothesis}

Assuming $\double{\mu_x}$ gives positive measure to $\REL$, \lemref{beta is continuous} allows us to define a Borel measure $\mu$ on $\dbL$ by
\[d\mu (\xi, \eta)
= e^{-\delta_\Gamma \beta_x (\xi, \eta)} \chi_{\REL} (\xi, \eta) d\mu_x (\xi) d\mu_x (\eta),\]
where $x \in X$ is arbitrary.
It follows easily from the definitions that one has
\[d\mu (\xi, \eta)
= e^{-\delta_\Gamma \beta_p (\xi, \eta)} \chi_{\REL} (\xi, \eta) d\mu_p (\xi) d\mu_p (\eta)\]
for all $p \in X$.  Thus $\mu$ does not depend on choice of $x \in X$ and is $\Gamma$-invariant.  However, we still need to show $(\double{\mu_x}) (\REL) > 0$ in order for $\mu$ to be a measure.

\begin{lemma}\label{full-mu}
If $\Gamma$ has a rank one element, then $\mu$ is a measure and $\supp(\mu) = \dbL$.
\end{lemma}
\begin{proof}
By \propref{hamenstadt}, $\Gamma$ acts minimally on $\Lam$, and so $\supp(\mu_x) = \Lam$.  Hence any nonempty open set in $\dbL$ has positive $(\double{\mu_x})$-measure, and therefore $(\double{\mu_x}) (\REL) > 0$.  Thus $\mu$ is a measure.  Moreover, $\REL \subseteq \supp(\mu)$.  By \propref{hamenstadt}, %
$\REL$ is dense in $\dbL$, so $\supp(\mu) = \dbL$.
\end{proof}

The above proof also shows:

\begin{cor}\label{mu is a measure}
If $\supp(\mu_x) = \bd X$ and $\RE$ is nonempty, then $\mu$ is a measure and $\RE \subseteq \supp(\mu)$.
\end{cor}

\begin{rem}
We will show (\corref{RE has full measure}) that if $\Gamma$ has a rank one element, then the $\chi_{\REL}$ term can be removed from the definition of $\mu$.
\end{rem}

We want to use $\mu$ to create a $\Gamma$-invariant Borel measure on $SX$.  Potentially, one might do so on $\Reg_C$ directly, but it is not clear how to ensure that the result would be Borel.  We can do so on the related space $\GER$, however.

\begin{definition}
Suppose $\supp (\double{\mu_x}) (\REL) > 0$.  The \defn{Bowen-Margulis measure} $m$ on $\GER$ is given by $m = \mu \times \Leb$, where $\Leb$ is Lebesgue measure on $\R$.
\end{definition}

Now $\Gamma$ is a countable group acting properly (by \lemref{blem}) and by homeomorphisms (by \propref{homeos2}) on $\RER$ (which admits a proper metric)%
, preserving the Borel measure $m$.
Thus there is (see, for instance, Appendix A of \cite{ricks-thesis}) a unique Borel quotient measure $m_{\Gamma}$ on $\modG{(\RER)}$ satisfying the following characterizing property:
\vspace{-8pt}
\begin{quotation}
\begin{equation}\tag{$\dagger$}
\label{qm cond for B-M measure}
\int_A h \, dm
  = \int_{\modG{(\RER)}} (\bar h \cdot \bar f_A) \,dm_{\Gamma}
\end{equation}
for all Borel sets $A \subseteq \RER$ and $\Gamma$-invariant Borel maps $h \colon \RER \to [0, \infty]$, where $f_A (v) = \abs{\setp{\gamma \in \Gamma}{\gamma v \in A}}$ and $\bar h, \bar f_A$ are the induced maps on $\modG{(\RER)}$.
\end{quotation}
Moreover, the geodesic flow $g^t_\Gamma$ on $\RER$ preserves $m_\Gamma$.

The measure $m_{\Gamma}$ on $\modG{(\RER)}$ naturally extends to a Borel measure on $\modG{(\GER)}$, also denoted $m_{\Gamma}$.  This measure is called the \defn{Bowen-Margulis measure} on $\modG{(\GER)}$.

Note that by \itemrefstar{qm cond for B-M measure}, a Borel set $A \subset \GER$ has $m(A) = 0$ if and only if $m_\Gamma (\pr(A)) = 0$, where $\pr \colon \GER \to \modG{(\GER)}$ is the canonical projection.  In particular, $\supp(m_\Gamma) = \modG{(\GELR)}$.

\begin{prop}\label{Bowen-Margulis quotient measure}
If $\Gamma$ acts cocompactly on $X$, then $m_{\Gamma}$ is finite.
\end{prop}
\begin{proof}
It suffices to show $m(F) < \infty$ for some $F \subseteq \GER$ such that $\Gamma F = \GER$.  Now the $\Gamma$-action on $SX$ is cocompact by \lemref{SX is proper}, so there is a compact $K \subset SX$ such that $\Gamma K = SX$.  Let $x \in X$ and $F = \pi_x (K)$.  Then $\Gamma F = \GER$ because $\Gamma K = SX$.  We will show $m(F) < \infty$.

Since $F$ is compact by continuity of $\pi_x$, we have $F \subseteq \GE \times [-r, r]$ for some finite $r \ge 0$; thus it suffices to prove $\mu(\emap(K)) < \infty$.  Let $A = \setp{v(0) \in X}{v \in K}$.  By \lemref{beta}, $\beta_x (v) = (b_{\xi} + b_{\eta})(v(0), x)$.  Hence
\[\beta_x (K) \subseteq \setp{(b_{\xi} + b_{\eta}) (p, x)}{(\xi, \eta) \in \emap(K) \text{ and } p \in A}.\]
So $\abs{\beta_x (K)} \le 2R$, where $R$ is the diameter of $A$ in $X$, because the map $p \mapsto b_{\zeta} (p, x)$ is $1$-Lipschitz for all $\zeta \in \bd X$.  Thus
\begin{align*}
\mu(\emap(K)) &= \int_{\emap(K) \cap \REL} e^{-\delta_\Gamma \beta_x (\xi, \eta)} d\mu_x (\xi) d\mu_x (\eta)
\le \int_{\emap(K) \cap \REL} e^{\delta_\Gamma \cdot 2R} d\mu_x (\xi) d\mu_x (\eta)
\le e^{\delta_\Gamma \cdot 2R}.
\end{align*}
Hence $\mu(\emap(K)) < \infty$, and therefore $m(F) < \infty$.  Thus $m_\Gamma$ is finite.
\end{proof}

The following lemma is a simple consequence of Poincar\'e recurrence.

\begin{lemma}\label{recurrence}
Suppose $m_{\Gamma}$ is finite.  Let $W$ be the set of $w \in SX$ such that both $w$ and $\flip w$ are weakly $\Gamma$-recurrent.  Then $\mu(\emap(SX \setminus W)) = 0$.
\end{lemma}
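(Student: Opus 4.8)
The plan is to deduce the statement from Poincar\'e recurrence for the finite, $g^t_\Gamma$-invariant measure $m_\Gamma$ on $\modG{(\GER)}$ produced in \propref{Bowen-Margulis quotient measure}, and then to transport the resulting full-measure conclusion back to $\mu$ on $\GE$ through the product structure $m = \mu \times \Leb$.

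The key observation is that $\pi_x \colon SX \to \GER$ is equivariant for both structures: it intertwines the geodesic flow, since $\pi_x(g^t w) = g^t \pi_x(w)$ with $g^t$ acting on $\GER$ by translation in the $\R$-coordinate, and it intertwines the $\Gamma$-action by \propref{homeos2}. Consequently $w$ is weakly $\Gamma$-recurrent exactly when $\pr(\pi_x(w))$ is forward recurrent under $g^t_\Gamma$, and $\flip w$ is weakly $\Gamma$-recurrent exactly when $\pr(\pi_x(w))$ is backward recurrent; thus $w \in W$ if and only if $\pr(\pi_x(w))$ is recurrent in both time directions. Applying Poincar\'e recurrence to $m_\Gamma$, and then to the reversed flow $g^{-t}$, the set $N \subseteq \modG{(\GER)}$ of points failing to be both forward- and backward-recurrent is $m_\Gamma$-null.

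Next I would descend. The set $\pr^{-1}(N)$ is $\Gamma$-invariant, and $m(\pr^{-1}(N)) = 0$ by the null-set equivalence \propref{Bowen-Margulis quotient measure}\itemref{B-M qm: null sets}. Since being recurrent is a flow-invariant property and $g^t$ only translates the $\R$-coordinate of $\GER = \GE \times \R$, the set $\pr^{-1}(N)$ splits as $N_0 \times \R$ for some $N_0 \subseteq \GE$, so
\[0 \;=\; m(N_0 \times \R) \;=\; \mu(N_0)\cdot\Leb(\R);\]
as $\Leb(\R) = \infty$ this forces $\mu(N_0) = 0$. Because membership of $w$ in $W$ is equivalent to $\pr(\pi_x(w)) \notin N$, a condition depending only on $\emap(w)$ (distinct geodesics over the same endpoint pair have $\pi_x$-images on a common flow orbit), we obtain $\emap(SX \setminus W) \subseteq N_0$, and hence $\mu(\emap(SX \setminus W)) = 0$, the set $\emap(SX \setminus W)$ being contained in the $\mu$-null set $N_0$.

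The step I expect to be the main obstacle is the equivalence asserted in the second paragraph---specifically its lifting half, that recurrence of $\pr(\pi_x(w))$ in the quotient yields honest sequences $t_n \to +\infty$ and $\gamma_n \in \Gamma$ with $\gamma_n g^{t_n} w \to w$ weakly, rather than mere abstract recurrence in a possibly non-Hausdorff quotient. Here I would exploit cocompactness: fixing a compact $K \subseteq SX$ with $\Gamma K = SX$ (\lemref{SX is proper}), and given times $t_n \to +\infty$ along which $g^{t_n}_\Gamma \pr(\pi_x(w))$ returns to $\pr(\pi_x(w))$, I would choose $\gamma_n$ with $\gamma_n g^{t_n} w \in K$ and extract, by Arzel\`a--Ascoli, a strong limit $\gamma_n g^{t_n} w \to u \in SX$. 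Applying $\pi_x$ and comparing projections (cf.\ \lemref{quotient}) identifies $u$ with $w$ up to the relation $\sim$ and the $\Gamma$-action; after absorbing the corresponding group element into $\gamma_n$ one gets $\gamma_n g^{t_n} w \to w$ weakly, and the case $t_n \to -\infty$ is identical. Making this precise will require care about the topology of $\modG{(\GER)}$ and uniqueness of limits there, for which I would lean on the Borel structure underlying the quotient construction of \propref{Bowen-Margulis quotient measure}.
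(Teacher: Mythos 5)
Your first three paragraphs are, in substance, exactly the paper's proof: Poincar\'e recurrence applied to the finite flow-invariant measure $m_\Gamma$ on the second-countable space $\modG{(\GER)}$, the null-set correspondence of \propref{Bowen-Margulis quotient measure}~\itemref{B-M qm: null sets}, and the observation that a flow-invariant null subset of $\GER$ has the form $N_0 \times \R$, so that its $m$-nullity forces $\mu(N_0) = 0$. The paper compresses the remaining step into the phrase ``$W$ is $\Gamma$-invariant and projects down to $W_\Gamma$,'' and that step---lifting recurrence in the quotient back to weak $\Gamma$-recurrence upstairs---is precisely where your proposal has a genuine gap. Your argument chooses $\gamma_n$ by cocompactness so that $\gamma_n g^{t_n} w$ lies in a fixed compact set, extracts a strong limit $u$, and then identifies $u$ with $w$ ``by comparing projections.'' That identification requires the sequence $\pr(\pi_x(\gamma_n g^{t_n} w))$ to have a unique limit in $\modG{(\GER)}$, and uniqueness genuinely fails under the standing hypotheses: if $X$ contains a $\Gamma$-periodic flat plane $F$ (e.g.\ $X$ the universal cover of a rank one nonpositively curved graph manifold, which does admit rank one axes), then each $\gamma$ in the lattice $\Z^2 \le \mathrm{Stab}(F)$ fixes the endpoint pair $(\xi,\eta)$ of every geodesic of $F$ and translates the $\R$-coordinate of $\GER$ by the component of its translation vector in the direction of $\xi$; when that direction is irrational for the lattice, these shifts form a dense subgroup of $\R$, so the $\Gamma$-orbit of $(\xi,\eta,s)$ is dense in the fiber $\set{(\xi,\eta)}\times\R$. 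Orbits are not closed, the quotient is badly non-Hausdorff, and no appeal to the underlying Borel structure can restore uniqueness of topological limits. Note also that your $\gamma_n$, chosen merely to return a footpoint to a compact set, have no a priori relation to the group elements witnessing the returns, so the limit $u$ need not be related to $w$ at all.

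The correct lifting is much simpler and sidesteps the quotient's pathology entirely: $\pr$ is an open map, since $\pr^{-1}(\pr(U)) = \bigcup_{\gamma \in \Gamma} \gamma U$ is open for every open $U \subseteq \GER$ by \propref{homeos2}. So if $\pr(\pi_x(w))$ is forward recurrent, then for each $k$ it returns at some time $t_k > k$ to the open set $\pr\paren{B(\pi_x(w), 1/k)}$, where $B$ denotes balls for a metric compatible with the topology of $\GER$; unwinding what membership in that set means produces $\gamma_k \in \Gamma$ with $d(\gamma_k g^{t_k} \pi_x(w), \pi_x(w)) < 1/k$, and since $\pi_x$ intertwines both the flow and the $\Gamma$-action this says exactly that $\gamma_k g^{t_k} w \to w$ weakly. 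This openness argument, not limit-uniqueness, is what the paper's one-line claim rests on. One further wrinkle (which the paper's terse write-up shares): backward recurrence of $\pr(\pi_x(w))$ corresponds to weak recurrence of $\flip w$ only through the involution $(\xi,\eta,s) \mapsto (\eta,\xi,\beta_x(\xi,\eta)-s)$ of $\GER$, and $\beta_x$ is only upper semicontinuous off $\RE$ (\lemref{beta is continuous}), so this correspondence is not automatic at higher-rank endpoint pairs. A clean way around it: prove the statement first for the set $W'$ of forward weakly $\Gamma$-recurrent geodesics as above; then $W = W' \cap \flip W'$, the map $\emap \circ \flip$ is $\emap$ followed by the coordinate swap $\sigma$ of $\dbX$, and $\mu$ is $\sigma$-invariant because $\beta_x$ is symmetric, whence $\mu(\emap(SX \setminus W)) \le \mu(\emap(SX \setminus W')) + \mu(\sigma(\emap(SX \setminus W'))) = 0$.
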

\begin{proof}
Note that $\modG{(\GER)}$ has a countable basis, so by Poincar\'e recurrence, the set $W_\Gamma$ of forward and backward recurrent points in $\modG{(\GER)}$ has full $m_\Gamma$-measure.  Now $W$ is $\Gamma$-invariant and projects down to $W_\Gamma$ in $\modG{(\GER)}$, so $m ((\GER) \setminus \pi_x (W)) = 0$.  The result follows from $g^t$-invariance of $W$.
\end{proof}

We conclude this section by extending \propref{rank one summary} and \propref{rank one summary 2}.

\begin{prop}\label{rank one summary 3}
Let $\Gamma$ be a group acting properly discontinuously, cocompactly, and isometrically on a proper, geodesically complete \textnormal{CAT($0$)} space $X$.  Suppose $X$ contains a rank one geodesic.  The following are equivalent:
\begin{enumerate}
\item \label{ros3 cond: some rank one axis}
$X$ has a rank one axis.
\item \label{ros3 cond: mu_x has full support}
$\supp(\mu_x) = \bd X$.
\item \label{ros3 cond: RE has positive measure}
$(\mu_x \times \mu_x)(\RE) > 0$.
\item \label{ros3 cond: some weak recurrence}
Some rank one geodesic of $X$ is weakly $\Gamma$-recurrent.
\end{enumerate}
\end{prop}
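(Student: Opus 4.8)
The plan is to prove this proposition by combining the previously established equivalences in \propref{rank one summary} and \propref{rank one summary 2} with new arguments connecting the support of $\mu_x$ and the positivity of $(\mu_x \times \mu_x)(\RE)$ to the existence of a rank one axis. I would organize the proof as a cycle of implications. The most economical route is probably to show \itemref{ros3 cond: some rank one axis} $\implies$ \itemref{ros3 cond: mu_x has full support} $\implies$ \itemref{ros3 cond: RE has positive measure} $\implies$ \itemref{ros3 cond: some weak recurrence} $\implies$ \itemref{ros3 cond: some rank one axis}, closing the loop.

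For \itemref{ros3 cond: some rank one axis} $\implies$ \itemref{ros3 cond: mu_x has full support}: assuming a rank one axis exists, \propref{rank one summary}\itemref{minimality} tells us $\Gamma$ acts minimally on $\bd X$. Since $\supp(\mu_x)$ is a nonempty, closed, $\Gamma$-invariant subset of $\bd X$ (nonempty because $\mu_x$ is a nontrivial measure; $\Gamma$-invariant by the equivariance condition \itemref{equivariance} of a conformal density), minimality forces $\supp(\mu_x) = \bd X$. For \itemref{ros3 cond: mu_x has full support} $\implies$ \itemref{ros3 cond: RE has positive measure}: since $X$ admits a rank one geodesic by hypothesis, $\RE$ is a nonempty open subset of $\dbX$ (by \lemref{blem}); a product measure of two measures with full support gives positive mass to every nonempty open set, so $(\mu_x \times \mu_x)(\RE) > 0$.

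The implication \itemref{ros3 cond: RE has positive measure} $\implies$ \itemref{ros3 cond: some weak recurrence} is where I expect to invoke \lemref{recurrence}. Under the hypothesis that $(\mu_x \times \mu_x)(\RE) > 0$, the measure $\mu$ assigns positive mass to $\RE$, hence $m = \mu \times \Leb$ is nontrivial. I would then apply \lemref{recurrence} to conclude that $\mu$-almost every geodesic is both forward and backward weakly $\Gamma$-recurrent; since $\mu(\RE) > 0$, in particular there must exist a rank one geodesic that is weakly $\Gamma$-recurrent, establishing \itemref{ros3 cond: some weak recurrence}. The subtlety here is ensuring the hypotheses of \lemref{recurrence} (which assumes $\supp(\mu_x) = \bd X$) are actually available, so I may instead need to run this implication through the already-proven equivalence with \itemref{ros3 cond: mu_x has full support}, treating the four conditions together rather than strictly as a linear chain.

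Finally, \itemref{ros3 cond: some weak recurrence} $\implies$ \itemref{ros3 cond: some rank one axis} is the step I anticipate being the main obstacle, and it is where the work of the preceding section pays off. Given a weakly $\Gamma$-recurrent rank one geodesic $v \in \Reg$, \lemref{rec => Tits-isolated} tells us that $v^+$ is isolated in the Tits metric, i.e.\ $v^+$ has infinite Tits distance to every other point of $\bd X$. This is precisely condition \itemref{infinite-diameter} of \propref{rank one summary}, which is equivalent to \itemref{rank1axis} there; hence $X$ has a rank one axis. This closes the cycle and completes the proof.
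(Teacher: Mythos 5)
Your cycle is correct, and on three of its four links it coincides with the paper's own proof: the paper proves (1) $\implies$ (2) exactly as you do (minimality of the $\Gamma$-action on $\bd X$ from \propref{rank one summary}, plus the fact that $\supp(\mu_x)$ is a nonempty closed $\Gamma$-invariant set), proves (2) $\implies$ (3) from openness of $\RE$ in $\dbX$, and proves (3) $\implies$ (4) by citing \lemref{recurrence}. Where you genuinely diverge is (4) $\implies$ (1). The paper's argument takes a weakly $\Gamma$-recurrent $v$, assumes it central ($v \in \Reg_C$), extracts via \lemref{quotient} a strong limit $u \sim v$ together with a limiting isometric embedding $Y_v \into Y_u = Y_v$, and uses the isometry-invariance of the circumcenter of $Y_v$ to force $u = v$; thus $v$ is \emph{strongly} $\Gamma$-recurrent, hence nonwandering mod $\Gamma$, and \propref{rank one summary 2} \itemrefstar{ros2 cond: some rank one geodesic is nonwandering} yields the axis. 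You instead apply \lemref{rec => Tits-isolated} to conclude $v^+$ is Tits-isolated and then invoke condition \itemrefstar{infinite-diameter} of \propref{rank one summary}. Your route is valid and shorter (the hypotheses of both cited results are in force, and neither presupposes an axis); the paper's route buys the extra structural fact that weakly recurrent central geodesics are strongly recurrent, which is in the spirit of its later upgrades from weak to strong convergence.

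One caution about the subtlety you flag in (3) $\implies$ (4): you are right that \lemref{recurrence} formally hypothesizes $\supp(\mu_x) = \bd X$, but your proposed fallback---routing through ``the already-proven equivalence with (2)''---is circular, since at that point (3) $\implies$ (2) is only available via the chain (3) $\implies$ (4) $\implies$ (1) $\implies$ (2) that you are in the middle of establishing. The paper cites \lemref{recurrence} without comment, so it shares the cosmetic issue; the clean repair is to observe that the full-support hypothesis is never used in the relevant proofs: the finiteness of $m_\Gamma$ in \propref{Bowen-Margulis quotient measure} and the Poincar\'e-recurrence argument in \lemref{recurrence} go through for any conformal density, so the conclusion $\mu(\emap(SX \setminus W)) = 0$ holds unconditionally, and (3) (which gives $\mu(\RE) > 0$) then produces the desired weakly $\Gamma$-recurrent rank one geodesic.
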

\begin{proof}
\itemrefstar{ros3 cond: mu_x has full support}%
$\implies$%
\itemrefstar{ros3 cond: RE has positive measure} is clear because $\RE$ is open;
\itemrefstar{ros3 cond: RE has positive measure}%
$\implies$%
\itemrefstar{ros3 cond: some weak recurrence} is a corollary of \lemref{recurrence}.  For
\itemrefstar{ros3 cond: some rank one axis}%
$\implies$%
\itemrefstar{ros3 cond: mu_x has full support}, recall (\propref{rank one summary}) that the $\Gamma$-action on $\bd X$ is minimal if $X$ has a rank one axis; the claim follows immediately.

We now prove
\itemrefstar{ros3 cond: some weak recurrence}%
$\implies$%
\itemrefstar{ros3 cond: some rank one axis}.
Suppose $v \in \Reg$ is weakly $\Gamma$-recurrent; we may assume $v \in \Reg_C$.  By \lemref{quotient}, we may find $\gamma_n g^{t_n} (v) \to u \sim v$, and the natural isometries $Y_v \to \gamma_n Y_{g^{t_n} v}$ converge uniformly (on compact subsets) to an isometric embedding $\varphi$ of $Y_v$ into $Y_u = Y_v$.  But $v(0)$ is the circumcenter of $Y_v$, and that is isometry-invariant, so we must have $u = v$.  Thus $v$ is strongly $\Gamma$-recurrent, and therefore nonwandering mod $\Gamma$.  Therefore, $X$ has a rank one axis by \propref{rank one summary 2}.
\end{proof}

\section{Properties of Bowen-Margulis Measures}
\label{props of B-M measures}

We now are in a position to prove some important properties about the Bowen-Margulis measures we constructed on $\GER$ and $\modG{(\GER)}$.  In \thmref{isolated almost everywhere}, we use the Bowen-Margulis measures to obtain a structural result about the Patterson-Sullivan measures.  Then (\thmref{a.e.-geodesic-is-lonely}) we prove a structural result about $SX$.  This theorem allows us to finally define Bowen-Margulis measures on $SX$ and $\modG{SX}$.  We end the section by showing that the geodesic flow is ergodic with respect to the Bowen-Margulis measure on $\modG{SX}$.

\begin{standing hypothesis}
In this section, let $X$ be a proper CAT($0$) space under a proper, non-elementary, isometric action by a group $\Gamma$ with a rank one element.  Assume that $m_{\Gamma}$ is finite.
\end{standing hypothesis}

By \lemref{recurrence}, we have weak recurrence almost everywhere.  Our next theorem uses \lemref{rec => Tits-isolated} to capitalize on the prevalence of recurrence.

\begin{theorem}[\thmref{main isolated}]
\label{isolated almost everywhere}
\stateisolated
\end{theorem}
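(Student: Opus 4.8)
The plan is to prove that the set $N = \setp{\xi \in \bd X}{\xi \text{ is not isolated in the Tits metric}}$ is $\mu_x$-null, by feeding the almost-everywhere weak recurrence of \lemref{recurrence} into \lemref{rec => Tits-isolated} and then transporting the resulting statement about $\mu$ on $\RE$ back to $\mu_x$ on $\bd X$ through the product structure $d\mu(\xi,\eta) = e^{-\delta_\Gamma \beta_x(\xi,\eta)}\,d\mu_x(\xi)\,d\mu_x(\eta)$. First I would show that $\mu$-almost every pair in $\RE$ has \emph{both} endpoints Tits-isolated. By \lemref{recurrence}, $\mu(\emap(SX\setminus W)) = 0$, so for $\mu$-a.e. $(\xi,\eta) \in \RE$ every geodesic $w$ with $\emap(w) = (\xi,\eta)$ lies in $W$; as $\Reg = \emap^{-1}(\RE)$, such a $w$ is rank one, and both $w$ and $\flip w$ are weakly $\Gamma$-recurrent. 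Applying \lemref{rec => Tits-isolated} to $w$ makes $\eta = w^+$ Tits-isolated, and applying it to $\flip w \in \Reg$ makes $\xi = w^- = (\flip w)^+$ Tits-isolated. Hence $\mu\paren{\RE \cap (N \times \bd X)} = 0$.

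Next I would unravel this null statement using the product structure. Writing $D_\xi = \setp{\eta \in \bd X}{d_T(\xi,\eta) > \pi}$, \lemref{rank one > pi} identifies $D_\xi$ with the fiber $\setp{\eta}{(\xi,\eta) \in \RE}$. Since the density $e^{-\delta_\Gamma \beta_x}$ is strictly positive on $\GE$, the vanishing
\[0 = \mu\paren{\RE \cap (N\times\bd X)} = \int_N \paren{\int_{D_\xi} e^{-\delta_\Gamma \beta_x(\xi,\eta)}\,d\mu_x(\eta)}\,d\mu_x(\xi)\]
forces $\mu_x(D_\xi) = 0$ for \mae{$\mu_x$} $\xi \in N$.

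Finally I would contradict this by showing $\mu_x(D_\xi) > 0$ for \emph{every} $\xi \in \bd X$. Each $D_\xi$ is open, because the Tits metric is lower semicontinuous (\lemref{semicontinuity of Tits metric}) and hence so is the slice $\eta \mapsto d_T(\xi,\eta)$. Each $D_\xi$ is also nonempty: letting $w$ be a rank one axis with endpoints $w^\pm$, \lemref{ping-pong}(2) supplies a rank one geodesic from $\xi$ to $w^+$ whenever $\xi \neq w^+$ (and the axis itself joins $w^+$ to $w^-$ when $\xi = w^+$), so by \lemref{rank one > pi} at least one of $w^\pm$ lies in $D_\xi$. Because $X$ admits a rank one axis, \propref{rank one summary 3} gives $\supp(\mu_x) = \bd X$, so every nonempty open set carries positive $\mu_x$-measure; thus $\mu_x(D_\xi) > 0$ for all $\xi$. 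Comparing with the previous paragraph forces $\mu_x(N) = 0$, which is the claim.

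The main obstacle is exactly the passage from an almost-everywhere statement for $\mu$ on $\GER$, where recurrence naturally lives, to an almost-everywhere statement for $\mu_x$ on $\bd X$. The naive route would be to extract positive $\mu_x$-measure of the isolated set and then upgrade to full measure by ergodicity, but ergodicity (\thmref{ergodicity}) is not yet available at this stage of the paper. The device that sidesteps this is to run the Fubini argument in the $\xi$-variable and observe that every fiber $D_\xi$ is a \emph{nonempty open} set, so full support of $\mu_x$ makes the fiber measures positive uniformly in $\xi$ rather than merely on a set of positive measure; this converts the null-fiber conclusion directly into $\mu_x(N)=0$.
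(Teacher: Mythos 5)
Your proof is correct, and it rests on the same pillars as the paper's own argument---\lemref{recurrence}, \lemref{rec => Tits-isolated}, \lemref{ping-pong}, full support of $\mu_x$, and a Fubini argument across the product structure of $\mu$---but the two proofs are organized in mirror image, and the comparison is instructive. The paper argues locally and directly: for each $\xi \in \bd X$ it builds, via \lemref{ping-pong} and \lemref{blem}, an open product neighborhood $U \times V$ of $(v^-,\xi)$ inside $\RE$, runs Fubini there slicing in the $\eta$-variable, and uses positivity of $\mu_x(U)$ (full support, invoked silently) to produce, for \mae{$\mu_x$} $\eta \in V$, a witness $\zeta \in U$ with $(\zeta,\eta) \in \emap(W)$; compactness of $\bd X$ then finishes. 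You argue globally and by contradiction, slicing in the $\xi$-variable: the bad set $\RE \cap (N \times \bd X)$ is $\mu$-null, so almost every slice $D_\xi$ with $\xi \in N$ is $\mu_x$-null, yet every $D_\xi$ is nonempty and open, hence of positive measure. Your route dispenses with \lemref{blem}'s product neighborhoods and the compactness covering, and the $\flip$ symmetrization isolating both endpoints at once is clean; you also make explicit the role of full support (\propref{rank one summary 3}), which the paper's proof needs but does not cite. What the paper's ordering buys is that it never needs the non-isolated set $N$ to be $\mu_x$-measurable, since it outputs full-measure sets $W^+ \subseteq V$ of isolated points directly, whereas your display $\int_N (\cdots)\, d\mu_x(\xi)$ and the final conclusion $\mu_x(N)=0$ presuppose measurability of $N$. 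This is a patchable technicality rather than a gap: either note that $N$ is $\sigma$-compact---it is the union over $n$ of the first-coordinate projections of the sets $\setp{(\xi,\eta) \in \dbX}{d_T(\xi,\eta) \le n, \ \xi \neq \eta}$, each of which is $\sigma$-compact because $\setp{d_T \le n}$ is closed by \lemref{semicontinuity of Tits metric} and $\setp{\xi \neq \eta}$ is open in the compact metrizable space $\dbX$---or run your Tonelli argument on a Borel $\mu$-null superset $Z \supseteq \RE \cap (N \times \bd X)$ and observe that the Borel full-measure set of $\xi$ whose $Z$-slice is $\mu_x$-null must be disjoint from $N$, so that $N$ sits inside a Borel null set, which is exactly what the almost-everywhere statement requires.
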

\begin{proof}
Let $\Omega$ be the set of Tits-isolated points in $\bd X$, and let $\xi \in \bd X$.  Find $v \in \Reg$ an axis of a rank one element; we may assume $v^- \neq \xi$.  Then by \lemref{ping-pong}, there is a geodesic $w \in \Reg$ with $(w^-, w^+) = (v^-, \xi)$.  By \lemref{blem}, we have an open product neighborhood $U \times V$ of $(v^-, \xi)$ in $\RE$.

Let $W$ be the set of weakly $\Gamma$-recurrent geodesics in $SX$.  Then $\mu ((U \times V) \setminus \emap(W)) = 0$ by \lemref{recurrence}.  So by Fubini's theorem, there exists $W^+ \subseteq V$ such that $\mu_x (V \setminus W^+) = 0$, and $\mu_x (\setp{\zeta \in U}{(\zeta, \eta) \notin \emap(W)}) = 0$ for every $\eta \in W^+$.  Now by \lemref{rec => Tits-isolated}, if $v \in \Reg$ is weakly $\Gamma$-recurrent, then $v^+$ is Tits-isolated.  Hence $W^+ \subseteq \Omega$.

Thus we have shown that every $\xi \in \bd X$ has a neighborhood $V$ such that $\mu_x (V \setminus \Omega) = 0$.  The theorem follows by compactness of $\bd X$.
\end{proof}

Since $w^+$ is a radial limit point for any weakly $\Gamma$-recurrent rank one geodesic $w$, the above proof has the following corollary.

\begin{cor}
\mae{$\mu_x$} $\xi \in \bd X$ is a radial limit point.
\end{cor}

\begin{cor}\label{RE has full measure}
$(\double{\mu_x}) (\dbX \setminus \REL) = 0$.
\end{cor}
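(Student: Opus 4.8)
The plan is to deduce the corollary directly from \thmref{isolated almost everywhere} together with the Tits-distance characterization of rank one pairs in \lemref{rank one > pi}. Recall that $(\xi, \eta) \in \RE$ precisely when $\xi, \eta$ are joined by a rank one geodesic, which by \lemref{rank one > pi} happens exactly when $d_T(\xi, \eta) > \pi$. So it suffices to show that $\double{\mu_x}$-almost every pair $(\xi, \eta) \in \dbX$ satisfies $d_T(\xi, \eta) > \pi$, and the theorem just proved supplies most of what is needed.

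First I would let $\Omega \subseteq \bd X$ be the set of Tits-isolated points and put $N = \bd X \setminus \Omega$; by \thmref{isolated almost everywhere}, $N$ lies in a Borel set of $\mu_x$-measure zero. The crux is then a purely set-theoretic observation: if $(\xi, \eta) \in \dbX \setminus \RE$ and $\xi \in \Omega$, then $\xi$ has infinite Tits distance to every other point, so were $\eta \neq \xi$ we would have $d_T(\xi, \eta) = \infty > \pi$, forcing $(\xi, \eta) \in \RE$, a contradiction. Hence $\eta = \xi$, and we obtain the inclusion
\[ \dbX \setminus \RE \subseteq (N \times \bd X) \cup \Delta, \]
where $\Delta = \setp{(\xi, \xi)}{\xi \in \bd X}$ is the diagonal.

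It then remains to check that both pieces are $\double{\mu_x}$-null. Since $\mu_x$ is finite and $\mu_x(N) = 0$, the product gives $\double{\mu_x}(N \times \bd X) = \mu_x(N)\,\mu_x(\bd X) = 0$. For the diagonal I would invoke non-atomicity: because $X$ admits a rank one axis it contains a rank one geodesic and so is not isometric to any $\R^n$, whence \corref{mu_x not atomic} yields $\mu_x(\set{\xi}) = 0$ for every $\xi \in \bd X$. Fubini's theorem then gives $\double{\mu_x}(\Delta) = \int_{\bd X} \mu_x(\set{\eta})\, d\mu_x(\eta) = 0$. Combining the two estimates yields $\double{\mu_x}(\dbX \setminus \RE) = 0$.

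Honestly, the substantive work is already contained in \thmref{isolated almost everywhere}, so I expect this to be a short deduction rather than a place where a genuine obstacle arises. The one point requiring care is the diagonal: one must remember that the rank one axis hypothesis rules out the flat case, which is exactly what licenses the appeal to \corref{mu_x not atomic}. A minor measurability caveat is that $\RE$ is Borel (being open in the Borel set $\GE$), $\Delta$ is closed, and $N$ may be enclosed in a Borel null set, so every set above is genuinely measurable.
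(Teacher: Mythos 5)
Your proof is correct and is essentially the same argument as the paper's: both reduce the claim to \thmref{isolated almost everywhere}, the non-atomicity of $\mu_x$ from \corref{mu_x not atomic}, and Fubini's theorem, since a Tits-isolated $\xi$ pairs into $\RE$ with every $\eta \neq \xi$. Your explicit decomposition $\dbX \setminus \RE \subseteq (N \times \bd X) \cup \Delta$ is just a repackaging of the paper's Fubini step, not a different route.
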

\begin{proof}
Let $\xi \in \bd X$ be a Tits-isolated radial limit point.  Then $(\xi, \eta) \in \RE$ for all $\eta \in \bd X \setminus \set{\xi}$.  Since $\mu_x (\set{\xi}) = 0$ by \corref{radial limits}, we see that \mae{$\mu_x$} $\eta \in \bd X$ has $(\xi, \eta) \in \REL$.  The result follows from \thmref{isolated almost everywhere} and Fubini's theorem.
\end{proof}

By \corref{RE has full measure}, $\mu$ and $\double{\mu_x}$ are in the same measure class (that is, each is absolutely continuous with respect to the other), so $\mu (\GE \setminus \RE) = 0$.  Thus almost no geodesic in $X$ (with respect to the Bowen-Margulis measure $m$ on $\GER$) bounds a flat half-plane.

Our next goal (\thmref{a.e.-geodesic-is-lonely}) is to show that almost no geodesic in $X$ bounds a flat strip of any width---that is, $\diam Y_v = 0$ for almost every geodesic $v$.  We will need a few lemmas, the first of which describes the upper semicontinuity property of the map $v \mapsto Y_v$ from $SX$ into the space of closed subsets of $X$ (with the Hausdorff metric).

\begin{lemma}\label{upper semicontinuity of Y_v}
If a sequence $(v_n) \subset SX$ converges to $v \in \Reg$ then some subsequence of $(Y_{v_n})$ converges, in the Hausdorff metric, to a closed subset $A$ of $Y_v$.
\end{lemma}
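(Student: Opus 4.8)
The plan is to confine the transversals $Y_{v_n}$ to a fixed compact set, extract a Hausdorff-convergent subsequence by the compactness of the hyperspace of compact sets, and then verify that the limit lies in $Y_v$ by passing to the limit in the defining condition of the transversal.

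First I would establish uniform boundedness using that $v \in \Reg$. Since $v$ is rank one, $Y_v$ is bounded, so I may fix $R > \diam Y_v$; then $v$ bounds no flat strip of width $R$, because a flat strip of width $s$ bounded by $v$ is precisely a geodesic segment of length $s$ in $Y_v$ issuing from $v(0)$, and no such segment can exceed $\diam Y_v$. Applying \lemref{blem} to $v$ produces neighborhoods $U \ni v^-$ and $V \ni v^+$ such that every geodesic with one endpoint in $U$ and the other in $V$ bounds no flat strip of width $2R$. Since $\emap$ is continuous, $v_n \to v$ forces $v_n^- \to v^-$ and $v_n^+ \to v^+$, so $v_n^- \in U$ and $v_n^+ \in V$ for all large $n$; hence $v_n$ bounds no flat strip of width $2R$, which means $Y_{v_n} \subseteq \overline{B}(v_n(0), 2R)$. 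As $v_n(0) \to v(0)$, all but finitely many $Y_{v_n}$ lie in the compact ball $\overline{B}(v(0), 2R+1)$. Each $Y_{v_n}$ is compact (closed and bounded, $X$ being proper), so by the Blaschke selection theorem a subsequence of $(Y_{v_n})$ converges in the Hausdorff metric to a nonempty compact set $A \subseteq \overline{B}(v(0), 2R+1)$.

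To see $A \subseteq Y_v$, fix $a \in A$ and choose $a_n \in Y_{v_n}$ (along the subsequence) with $a_n \to a$. Writing $a_n = w_n(0)$ for the geodesic $w_n \parallel v_n$ through $a_n$, we have $w_n^{\pm} = v_n^{\pm}$ and, by \propref{equivalent-geodesics}, $b_{v_n^-}(a_n, v_n(0)) = 0$. The footpoints $a_n$ being bounded, Arzel\`a-Ascoli lets me pass to a further subsequence with $w_n \to w$ in $SX$, where $w(0) = a$ and $w^{\pm} = v^{\pm}$. Since $w$ and $v$ share both endpoints at infinity, they are parallel (two geodesics with the same endpoints bound a flat strip; see \cite{bridson}), so $a = w(0) \in X_v$. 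Finally, letting $n \to \infty$ in $b_{v_n^-}(a_n, v_n(0)) = 0$—using $b_{v_n^-} \to b_{v^-}$ uniformly on compact subsets together with $a_n \to a$ and $v_n(0) \to v(0)$—gives $b_{v^-}(a, v(0)) = 0$. By \propref{equivalent-geodesics} this means $a \in Y_v$, so $A$ is a closed subset of $Y_v$.

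I expect the delicate step to be the uniform confinement of the $Y_{v_n}$: a priori the $v_n$ may be higher rank with unbounded transversals, in which case no Hausdorff limit inside the compact set $Y_v$ could exist. The rank one hypothesis on the limit $v$, transmitted to the $v_n$ through \lemref{blem} as an explicit width bound, is exactly what prevents this, and obtaining that quantitative bound is the heart of the argument. Once boundedness is in hand, both the selection of $A$ and the verification $A \subseteq Y_v$ are routine limiting arguments; the only point requiring care there is that the level condition $b_{v^-}(\cdot, v(0)) = 0$ is preserved under the limit.
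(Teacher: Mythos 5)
Your proof is correct and follows essentially the same route as the paper's: a uniform bound confining the transversals to a fixed compact set, compactness of the hyperspace (Blaschke selection), and a limiting argument identifying the limit as a subset of $Y_v$. The differences are presentational rather than substantive --- the paper runs the argument in $SX$ with the sets $P'_{v_n} = \setp{u \in SX}{u \sim v_n}$ and gets the containment from continuity of $\pi_x$, while you work directly with $Y_{v_n} \subseteq X$ and unpack that continuity into the endpoint and Busemann conditions of \propref{equivalent-geodesics}; moreover, your explicit appeal to \lemref{blem} supplies the justification for the boundedness step that the paper asserts without proof (``eventually every $P'_{v_n}$ lies in $B$'').
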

\begin{proof}
Let $R$ be the diameter of $Y_v$.  By \lemref{SX is proper}, the closed ball $B$ in $SX$ about $v$ of radius $2R$ is compact, so the space $\mathcal C B$ of closed subsets of $B$ is compact under the Hausdorff metric.  For $w \in SX$, let $P'_{w} = \setp{u \in SX}{u \sim w}$.  Eventually every $P'_{v_n}$ lies in $B$, so some subsequence $( P'_{v_{n_k}} )$ converges in $\mathcal C B$.  But every limit point of $w_n \in P'_{v_n}$ must lie in $P'_v$, thus $( Y_{v_{n_k}} )$ converges, in the Hausdorff metric, to a closed subset $A$ of $Y_v$.
\end{proof}

\begin{lemma}\label{constant a.e. 2}
Suppose $\psi \colon \RE \to S$ is a function from $\RE$ to a set $S$.  If $\Omega$ is a set of full $\mu$-measure in $\RE$ such that $\psi((a, b)) = \psi((a, d)) = \psi((c, d))$ for any $(a, b), (a,d), (c, d) \in \Omega$, then $\psi$ is constant $\mu$-a.e.~ on $\RE$.
\end{lemma}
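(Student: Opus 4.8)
The plan is to reduce the statement to \lemref{constant a.e.} by showing that $\psi$ is constant \mae{$\mu$} on each product neighborhood $U \times V \subseteq \RE$, and then invoking that lemma (whose hypotheses $\psi$ already satisfies). Throughout I would use that $\mu$ and $\double{\mu_x}$ lie in the same measure class on $\RE$, so that ``$\mu$-null'' and ``$\double{\mu_x}$-null'' coincide and Fubini's theorem may be applied to the product measure $\double{\mu_x}$.

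First I would fix a product neighborhood $U \times V \subseteq \RE$ and set $\Omega_0 = \Omega \cap (U \times V)$, a set of full measure in $U \times V$. The hypothesis on $\Omega$ says exactly that $\psi$ is constant along the horizontal and vertical slices of $\Omega$: if $(a, b), (a, d) \in \Omega$ then $\psi$ agrees (shared first coordinate), and if $(a, d), (c, d) \in \Omega$ then $\psi$ agrees (shared second coordinate). By Fubini there is a full-measure set $U' \subseteq U$ of first coordinates $a$ whose vertical slice $\Omega_a = \setp{b \in V}{(a,b) \in \Omega}$ is $\mu_x$-full in $V$, and a full-measure set $V' \subseteq V$ of second coordinates with $\mu_x$-full horizontal slices. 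For $a \in U'$ the common value of $\psi$ on its slice is well defined; call it $f(a)$, and define $g(d)$ similarly for $d \in V'$. Whenever $(a, d) \in \Omega$ with $a \in U'$ and $d \in V'$, reading off $\psi((a,d))$ in both ways gives $f(a) = g(d)$.

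Next I would promote this to genuine constancy. Since $\Omega_0 \cap (U' \times V')$ still has full measure, Fubini lets me choose a single $a_0 \in U'$ whose vertical slice meets $\Omega \cap (U' \times V')$ in a $\mu_x$-full subset of $V'$; for each such $d$ we get $g(d) = f(a_0)$, so $g$ equals the constant $f(a_0)$ on a full-measure subset of $V$. Feeding this back through the relation $f(a) = g(d)$ shows $\psi((a,d)) = f(a_0)$ for \mae{$\mu$} $(a,d) \in U \times V$. No measurability of $\psi$, $f$, or $g$ is needed: I only exhibit a full-measure set on which $\psi$ takes the single value $f(a_0)$, exactly as the Remark following \lemref{constant a.e.} permits. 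Having shown $\psi$ is constant \mae{$\mu$} on every product neighborhood, \lemref{constant a.e.} applies verbatim and yields that $\psi$ is constant \mae{$\mu$} on all of $\RE$.

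The only real subtlety, and the step I would be most careful about, is the double application of Fubini that upgrades the relation ``$f(a) = g(d)$ on a full-measure set'' to ``$f$ and $g$ are each \mae{} constant'': this is the standard fact that a function of $a$ agreeing almost everywhere with a function of $d$ must be almost everywhere constant, and it is precisely where the product structure of $\double{\mu_x}$ (hence of $\mu$) on $U \times V$ is used. Everything else is bookkeeping about null sets and the shared-coordinate hypothesis.
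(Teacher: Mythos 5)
Your proposal is correct and takes essentially the same approach as the paper's proof: both reduce to \lemref{constant a.e.} by applying Fubini's theorem (using that $\mu$ and $\double{\mu_x}$ are equivalent on $\RE$) to obtain full-measure slices in a product neighborhood, and then chaining an arbitrary point of $\Omega$ to a fixed base point via the shared-coordinate hypothesis. The paper merely compresses your two-function ($f$, $g$) bookkeeping into a single chain $\psi((c,d)) = \psi((a,d)) = \psi((a,b))$ through one chosen point $(a,b)$ whose slice is full.
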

\begin{proof}
By Fubini's theorem, there exists a subset $A$ of $\Lam$ such that $\mu_x (\Lam \setminus A) = 0$ and every $a \in A$ has $(a, b) \in \Omega$ for $\mu_x$-a.e.~ $b \in \Lam$.  Let $(a, b) \in (A \times \Lam) \cap \Omega$; by choice of $A$ there is some $B \subseteq \Lam$ such that $\mu_x (\Lam \setminus B) = 0$ and $\set{a} \times B \subset \Omega$.  So take any $(c, d) \in (A \times B) \cap \Omega$; then $(a, d) \in (A \times B) \cap \Omega$ by choice of $B$, so $(c, d), (a, d), (a, b) \in \Omega$.  Hence $\psi((c, d)) = \psi((a, d)) = \psi((a, b))$ by hypothesis.  Thus $\psi$ is constant across $(A \times B) \cap \Omega$, which has full measure in $\Lam \times \Lam$.  Apply Fubini's theorem again.
\end{proof}

\begin{rem}
The function $\psi$ in \lemref{constant a.e. 2} is not required to be measurable.  It suffices for $\psi$ to be constant on a set of full measure.
\end{rem}

\begin{lemma}\label{unique-isometry-type}
The isometry type of $Y_v$ is the same for $\mu$-a.e.~ $(v^-, v^+) \in \RE$.
\end{lemma}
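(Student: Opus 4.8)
The plan is to apply \corref{constant a.e. 3} to the map $\psi \colon \RE \to S$ sending $(v^-, v^+)$ to the isometry type of the transversal $Y_v$, where $S$ denotes the set of isometry types of compact metric spaces (the map $\psi$ need not be measurable, as the remark following \lemref{constant a.e.} permits). First I would check that $\psi$ is well-defined and has the required symmetries. For $(v^-, v^+) \in \RE$ every geodesic with those endpoints is parallel to $v$ by the Flat Strip Theorem, so they all determine the same parallel core $X_v = Y_v \times \R$, and the transversals over the various footpoints are mutually isometric; hence the isometry type of $Y_v$ depends only on $(v^-, v^+)$. Since $\gamma Y_v = Y_{\gamma v}$ and $Y_{\flip v} = Y_v$ (note $(\flip v)(0) = v(0)$), the map $\psi$ is invariant under both $\Gamma$ and $\flip$. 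Finally, because $v$ is rank one its transversal $Y_v$ is bounded, and as a closed bounded subset of the proper space $X$ it is compact; this compactness is the crucial point below.

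Next I would exhibit the full-measure, $\flip$-invariant set on which the fiberwise hypothesis of \corref{constant a.e. 3} holds. Let $W \subseteq SX$ be the set of $w$ for which both $w$ and $\flip w$ are weakly $\Gamma$-recurrent. By \lemref{recurrence} (applicable since the rank one axis forces $\supp(\mu_x) = \bd X$ via \propref{rank one summary 3}) we have $\mu(\emap(SX \setminus W)) = 0$, so $\emap(W)$ has full $\mu$-measure; intersecting with $\RE$, which has full $\mu$-measure by \corref{RE has full measure} together with the observation that $\mu$ and $\double{\mu_x}$ share a measure class, the set $\Omega = \emap(W) \cap \RE$ is a $\flip$-invariant subset of $\RE$ of full $\mu$-measure.

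The heart of the argument is the fiberwise comparison. Suppose $(a, b), (c, b) \in \Omega$, realized by weakly $\Gamma$-recurrent geodesics $v, w \in \Reg$ with $v^+ = w^+ = b$. Applying \corref{weak rec transversals} with $v$ recurrent and $w^+ = v^+$ yields an isometric embedding $Y_w \into Y_v$, while exchanging the roles of $v$ and $w$ yields $Y_v \into Y_w$. The main obstacle is to upgrade these two one-sided embeddings into a genuine isometry $Y_v \cong Y_w$: in general mutual isometric embeddings do not force an isometry. Here compactness saves the day, since a compact metric space is co-Hopfian for isometric embeddings---any distance-preserving self-map of a compact metric space is surjective, by a standard argument chasing the orbit of an omitted point against a near-recurrence supplied by compactness. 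Thus the composite isometric self-embedding $Y_v \into Y_w \into Y_v$ is surjective, forcing each factor to be surjective, so both embeddings are isometries and $\psi((a,b)) = \psi((c,b))$.

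With $\psi$ invariant under $\Gamma$ and $\flip$ and constant along the fibers of $\Omega$ in the sense just verified, \corref{constant a.e. 3} applies and gives that $\psi$ is constant $\mu$-a.e. on $\RE$; that is, the isometry type of $Y_v$ is the same for $\mu$-a.e. $(v^-, v^+) \in \RE$, as claimed. I expect the co-Hopfian step to be the only genuinely delicate point, the remaining ingredients being bookkeeping with the recurrence and full-measure lemmas already established.
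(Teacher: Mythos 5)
Your proposal is correct and follows essentially the same route as the paper: both use the set $W$ of geodesics that are weakly $\Gamma$-recurrent in both directions (full measure by \lemref{recurrence}), obtain mutual isometric embeddings of the compact transversals via \corref{weak rec transversals}, conclude isometry from compactness, and finish with \corref{constant a.e. 3}. The only difference is cosmetic: where the paper cites \cite[Theorem 1.6.14]{burago} for the fact that mutually embeddable compact metric spaces are isometric, you prove it directly via the (correct) co-Hopfian argument that an isometric self-embedding of a compact metric space is surjective.
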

\begin{proof}
Let $W$ be the set of $w \in \Reg$ such that $w$ and $\flip w$ are both weakly $\Gamma$-recurrent.  If $u, v \in W$ have $u^+ = v^+$ or $u^- = v^-$, then by \corref{weak rec transversals}, we have isometric embeddings between the compact metric spaces $Y_u$ and $Y_v$, and therefore $Y_u$ and $Y_v$ are isometric (see \cite[Theorem 1.6.14]{burago}).  Thus we may apply \lemref{constant a.e. 2} to the map $\psi$ taking $c \in \RE$ to the isometry type of $Y_c$, with $\Omega = \emap(W)$ by \lemref{recurrence}.
\end{proof}

Let $\Zerowidth = \setp{v \in SX}{\diam (Y_v) = 0}$, the set of zero-width geodesics.  Let $\ZE = \emap (\Zerowidth)$, the set of $(\xi, \eta) \in \GE$ such that no $v \in SX$ with $(v^-, v^+) = (\xi, \eta)$ bounds a flat strip of positive width.  By semicontinuity of the map $v \mapsto Y_v$ (\lemref{upper semicontinuity of Y_v}), the width function $v \mapsto \diam (Y_v)$ is semicontinuous on $SX$.  Thus $\ZE \subseteq \RE$ is Borel ($\emap \colon \Reg \to \RE$ being a topological quotient map by \lemref{quotient}).  Let
\[\ZL = \Zerowidth \cap E^{-1} (\dbL)
\quad \text{and} \quad
\ZEL = \ZE \cap (\dbL).\]

\begin{lemma}\label{ZL is dense}
If $w \in \SXL$ has stongly dense orbit in $\SXL$ mod $\Gamma$, then $w \in \ZL$.
\end{lemma}
\begin{proof}
By \lemref{strong acc transversals}, every $v \sim w$ induces an isometric embedding $Y_{w} \into Y_v$ that maps $w(0) \mapsto v(0)$.  Since $Y_v = Y_w$, this map is an isometry (by \cite[Theorem 1.6.14]{burago}), and therefore $\Isom(Y_w)$ acts transitively on $Y_w$.  But the circumcenter of $Y_w$ is an isometry invariant (note $w \in \Reg$), hence $Y_w$ must be a single point.  Thus $w \in \ZL$.
\end{proof}

\begin{standing hypothesis}
For the remainder of this section, assume $\ZL$ is nonempty.
\end{standing hypothesis}

\begin{theorem}[\thmref{main lonely}]
\label{a.e.-geodesic-is-lonely}
\statelonely
\end{theorem}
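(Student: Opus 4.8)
The plan is to reduce the statement to showing that the common transversal produced by \lemref{unique-isometry-type} is a single point, and then to rule out the positive-diameter case by showing it would force \emph{every} geodesic to bound a flat strip, which contradicts rank one.

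First I would cut down the domain. By \corref{RE has full measure} we have $(\double{\mu_x})(\dbX\setminus\RE)=0$, and since $\mu$ lies in the same measure class as $\double{\mu_x}$ it suffices to prove $\mu(\RE\setminus\ZE)=0$. By \lemref{unique-isometry-type} there is a compact metric space $Y$ with $Y_v$ isometric to $Y$ for $\mu$-a.e.\ $(v^-,v^+)\in\RE$; put $D=\diam Y$. Because $\diam Y_v$ depends only on the parallel core of $v$, hence only on $\emap(v)$, the width descends to a function $W$ on $\GE$ that equals $D$ for $\mu$-a.e.\ point of $\RE$. Since $\ZE=\setp{(\xi,\eta)\in\GE}{W(\xi,\eta)=0}$, the theorem is equivalent to the assertion $D=0$, which I would prove by contradiction, assuming $D>0$.

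Next I would show that $D>0$ forces a uniform lower bound on widths. By \lemref{upper semicontinuity of Y_v} the map $v\mapsto\diam Y_v$ is upper semicontinuous on $SX$, and as it factors through $\emap$ the induced $W$ is upper semicontinuous on $\GE$; thus $\setp{W<D}{}$ is open in $\GE$. Because $X$ admits a rank one axis, $\supp(\mu_x)=\bd X$ by \propref{rank one summary 3}, whence $\RE\subseteq\supp(\mu)$ by \lemref{full-mu}; consequently every nonempty open subset of $\RE$ has positive $\mu$-measure. If $\setp{W<D}{}$ met $\RE$ it would therefore carry positive measure, contradicting $W=D$ $\mu$-a.e.\ on $\RE$. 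Hence $\setp{W<D}{}\cap\RE=\varnothing$, and since $\mu$-a.e.\ pair lies in $\RE$, every geodesic $v\in SX$ satisfies $\diam Y_v\ge D$. In particular, from any footpoint the farthest point of $Y_v$ is at distance $\ge D/2$, so \emph{every} geodesic in $X$ bounds a flat strip of width $\ge D/2>0$.

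The hard part is to convert this uniform width bound into a flat half-plane, contradicting the presence of a rank one geodesic (indeed contradicting \corref{RE has full measure}). Fix the rank one axis $a$; by the previous step it bounds a flat strip of some maximal width $\ge D/2$, finite precisely because $a$ bounds no half-plane. The idea is to grow this strip: a geodesic $\sigma$ crossing $X_a=Y_a\times\R$ transversally through $a(0)$ again bounds a flat strip of width $\ge D/2$, now in a direction transverse to that of $a$, and I would glue such transverse strips, using geodesic completeness to extend geodesics and convexity of the parallel cores to align the flat pieces, in the spirit of Ballmann's flat-strip constructions. Feeding the ping-pong dynamics of the rank one axis (\lemref{ping-pong}) to translate these flats toward $a^+$ and extracting a limit by the Arzel\`a--Ascoli theorem should produce a genuine flat half-plane bounded by a rank one geodesic near $a$, the desired contradiction. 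Making this gluing-and-limiting step precise---ensuring the transverse strips actually assemble into a two-dimensional flat rather than merely abutting---is the main obstacle, and is where I expect the essential use of geodesic completeness and cocompactness to lie.
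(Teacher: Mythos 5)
Your first three steps are correct and give a clean reduction: using \corref{RE has full measure}, \lemref{unique-isometry-type}, semicontinuity of $v \mapsto \diam Y_v$ (\lemref{upper semicontinuity of Y_v}), and the fact that $\RE \subseteq \supp(\mu)$, you correctly conclude that if the theorem fails there is a single $D > 0$ with $\diam Y_v \ge D$ for \emph{every} geodesic, i.e., every geodesic of $X$ bounds a flat strip of definite positive width. Up to that point you run parallel to the paper, whose proof is also built on \lemref{unique-isometry-type}. The gap is the last step: you never show that this uniform-width scenario is impossible, and the mechanism you propose---assembling ``transverse'' strips and pushing them by the ping-pong dynamics of \lemref{ping-pong} to produce a flat half-plane---does not work as sketched.

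Concretely: any flat strip bounded by a geodesic $v$ consists of geodesics parallel to $v$, so it lies inside the parallel core $X_v = Y_v \times \R$. For the rank one axis $a$, every geodesic parallel to $a$ has the same core $X_a = Y_a \times \R$ with $Y_a$ compact, so all of their strips stay inside this one set; the hypothesis ``width $\ge D$ everywhere'' is perfectly self-consistent there and produces no growth. A geodesic $\sigma$ crossing $X_a$ through $a(0)$ bounds its strip inside its own core $X_\sigma$, and in a CAT($0$) space the union of two flat strips sharing a point, or even a common boundary geodesic, need not be flat nor contain any wider strip---this branching is exactly what distinguishes CAT($0$) spaces from manifolds. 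Finally, translating strips by powers of the axial isometry does not change their width, and an Arzel\`a--Ascoli limit of strips of width roughly $D/2$ through a compact set is again only a strip of width roughly $D/2$; to obtain a half-plane you would need strips of width tending to infinity through a fixed compact set, which nothing in your argument supplies. The paper never confronts the uniform-width scenario geometrically. Instead it uses \lemref{duality-prop} to connect any two members $v, w$ of the full-measure set $\Specialset$ by limits $\gamma_n g^{t_n} v_n \to w$ with $v_n \to v$, and exploits the canonical central geodesics: maximality of the Hausdorff limits of the transversals forces $Y_{v_n} \to Y_v$ and $\gamma_n Y_{g^{t_n} v_n} \to Y_w$, so the central geodesics $u_n \sim v_n$ converge to $v$ when $v \in \Reg_C$, whence $\gamma_n g^{t_n} u_n \to w$ and $w$ is central as well; thus $\Specialset \subseteq \Reg_C$. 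Since $\Specialset$ is $\emap$-saturated, every geodesic $w \sim v \in \Specialset$ is then itself central, which collapses $Y_v$ to a point. That recurrence-plus-centrality argument, not a half-plane construction, is what rules out positive width, and it (or some substitute for it) is missing from your proposal.
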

\begin{proof}
Let $\Specialset \subseteq \RL$ be the preimage under $\emap$ of the a.e.-set in $\REL$ from \lemref{unique-isometry-type}; then $\pi_x (\Specialset)$ has full $m$-measure.  Since $\Specialset$ is weakly dense in $\RL$, by \lemref{quotient} and semicontinuity of the map $v \mapsto Y_v$ there is an isometric embedding $Y_u \into Y_v$ for every $u \in \Specialset$ and $v \in \RL$.  Thus $\ZL \neq \varnothing$ gives us $\Specialset \subseteq \ZL$, and therefore $\ZEL$ has full $\mu$-measure.
\end{proof}

\begin{lemma}\label{weak things become strong}
If $v_n \to v$ weakly, and $v \in \Zerowidth$, then $v_n \to v$ strongly.
\end{lemma}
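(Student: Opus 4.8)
The plan is to reduce to \lemref{quotient} and then use the zero-width hypothesis to pin down the strong limit exactly, finally promoting subsequential convergence to convergence of the whole sequence.

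First I would unpack the hypothesis $v \in \Zerowidth$. By definition this means $\diam(Y_v) = 0$, so the transversal $Y_v$ is the single point $\set{v(0)}$, and consequently the parallel core $X_v = Y_v \times \R$ is exactly the image of $v$. In particular $v$ bounds no flat half-plane, so $v \in \Reg$; this is precisely what allows us to apply the weak-to-strong machinery of the previous section.

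Next, since $v_n \to v$ weakly and $v \in \Reg$, \lemref{quotient} supplies a subsequence $(v_{n_k})$ converging strongly to some $u \sim v$. I would then argue that $u = v$. By \propref{equivalent-geodesics}, the relation $u \sim v$ means $u \parallel v$ and $u(0) \in Y_v$; but $Y_v = \set{v(0)}$ forces $u(0) = v(0)$, while $u \parallel v$ (and hence $u^+ = v^+$, $u^- = v^-$) places the image of $u$ inside $X_v$, which is just the image of $v$. A unit-speed geodesic with the same image as $v$, the same basepoint $u(0) = v(0)$, and the same forward endpoint $u^+ = v^+$ must coincide with $v$. Thus the subsequence converges strongly to $v$ itself.

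Finally I would upgrade this to the full sequence. Any subsequence of $(v_n)$ still converges weakly to $v$, so the argument just given produces a further subsequence converging strongly to $v$; since $SX$ is a metric space, the fact that every subsequence has a further subsequence converging strongly to $v$ forces $v_n \to v$ strongly. The only delicate point is ensuring that the strong limit furnished by \lemref{quotient} is genuinely $v$ and not merely a geodesic equivalent to $v$ under $\sim$ --- and this is exactly where the hypothesis $\diam(Y_v) = 0$ does all the work, collapsing the entire $\sim$-class of $v$ to the single geodesic $v$. Everything else is a routine appeal to the results already established.
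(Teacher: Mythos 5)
Your proof is correct and follows essentially the same route as the paper: apply \lemref{quotient} to an arbitrary subsequence to extract a further subsequence converging strongly to some $u \sim v$, use the zero-width hypothesis to conclude $u = v$, and finish with the metric-space principle that a sequence converges if every subsequence has a further subsequence converging to the same limit. The only (cosmetic) difference is that you justify $u = v$ directly from \propref{equivalent-geodesics} together with $Y_v = \set{v(0)}$, which is a clean and fully explicit way to handle the step the paper attributes to \thmref{a.e.-geodesic-is-lonely}.
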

\begin{proof}
Let $v \in \Zerowidth$, and suppose $v_n \to v$.  Take an arbitrary subsequence of $(v_n)$.  By \lemref{quotient}, there is a further subsequence that converges strongly to some $u \sim v$.  By hypothesis on $v$, we get $u = v$.  Thus we have shown that every subsequence of $(v_n)$ contains a further subsequence that converges strongly to $v$.  Therefore, $v_n \to v$ strongly.
\end{proof}

\begin{cor}\label{strong stable sets}
For \mae{$m$} $v \in \SXL$,
\begin{align*}
H^s (v) &= \setp{w \in SX}{d(v(t), w(t)) \to 0 \text{ as } t \to +\infty} \\
\text{and}\quad
H^u (v) &= \setp{w \in SX}{d(v(t), w(t)) \to 0 \text{ as } t \to -\infty}.
\end{align*}
\end{cor}
\begin{proof}
By \thmref{a.e.-geodesic-is-lonely} and \lemref{recurrence}, \mae{$m$} $v \in \SXL$ is zero-width and weakly $\Gamma$-recurrent.  For such $v$, by \lemref{strong rec} every $w \in H^s (v)$ has $d(v(t), w(t)) \to 0$ as $t \to +\infty$, which establishes the claim for $H^s (v)$.  A symmetric argument establishes the claim for $H^u (v)$.
\end{proof}

\begin{cor}\label{homeo on Specialset}
The restriction of $\pi_x$ to $\Zerowidth$ is a homeomorphism onto its image.
\end{cor}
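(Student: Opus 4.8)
The plan is to establish that $\pi_x\res{\Zerowidth}$ is a continuous bijection onto its image with continuous inverse, handling each of the three properties separately. The substantive content has in fact already been assembled in the preceding results, so most of the work is bookkeeping.

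First, continuity comes for free: $\pi_x$ is continuous on all of $SX$ by construction (\defref{weak-def}), so its restriction to $\Zerowidth$ is continuous as well.

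Next I would prove injectivity. Suppose $v, w \in \Zerowidth$ with $\pi_x(v) = \pi_x(w)$. This equality of triples forces $v^- = w^-$, $v^+ = w^+$, and the two Busemann coordinates to agree, so by \propref{equivalent-geodesics} we have $v \sim w$; in particular $v \parallel w$ and $w(0) \in Y_v$. Since $\diam(Y_v) = 0$ and $v(0) \in Y_v$, the transversal $Y_v$ is the single point $\set{v(0)}$, forcing $w(0) = v(0)$. Working in the canonical splitting $X_v = Y_v \times \R$, a geodesic parallel to $v$ passing through the footpoint $v(0)$ with the same forward endpoint $v^+ = w^+$ must be $v$ itself; hence $v = w$, and $\pi_x\res{\Zerowidth}$ is injective.

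The one genuinely load-bearing step is continuity of the inverse. Because $SX$ and $\GER$ are metrizable, hence sequential, it suffices to check sequential continuity. Let $v_n, v \in \Zerowidth$ with $\pi_x(v_n) \to \pi_x(v)$; by definition this says $v_n \to v$ weakly. Because $v \in \Zerowidth$, \lemref{weak things become strong} upgrades this to $v_n \to v$ strongly in $SX$. Thus $(\pi_x\res{\Zerowidth})^{-1}$ carries convergent sequences to convergent sequences and is therefore continuous, which completes the proof. I do not anticipate a serious obstacle here: \thmref{a.e.-geodesic-is-lonely}, through its consequence \lemref{weak things become strong}, is exactly the statement that weak and strong convergence coincide on zero-width geodesics, so the only care needed is the reduction to sequences (legitimate by metrizability) and the observation that $\diam(Y_v) = 0$ collapses the $\sim$-equivalence to genuine equality.
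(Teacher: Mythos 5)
Your proof is correct and follows essentially the same route as the paper's: continuity of $\pi_x$ by construction, injectivity on $\Zerowidth$ via \propref{equivalent-geodesics} and the collapse of $Y_v$ to a point, and continuity of the inverse from \lemref{weak things become strong}. The only difference is that you spell out the injectivity and the reduction to sequential continuity, which the paper compresses into ``by definition'' and an implicit appeal to metrizability.
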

\begin{proof}
Fix $x \in X$.  By definition, $\pi_x \res{\Zerowidth}$ is injective, hence bijective onto its image.  Since $\pi_x$ is continuous (that is, every strongly convergent sequence in $\Zerowidth$ is weakly convergent), it remains to observe that $\pi_x \res{\Zerowidth}^{-1}$ is continuous (that is, every weakly convergent sequence in $\Zerowidth$ is strongly convergent) by \lemref{weak things become strong}.
\end{proof}

\begin{definition}
By \corref{homeo on Specialset}, $\pi_x \res{\Zerowidth}$ maps Borel sets to Borel sets, hence we may view $m$ as a $g^t$- and $\Gamma$-invariant Borel measure on $SX$ by setting $m(A) = m(\pi_x (A \cap \Zerowidth))$ for any Borel set $A \subseteq SX$.  We will write $m$ for this measure on $SX$, and $m_\Gamma$ for the corresponding finite Borel measure on $\modG{SX}$.
\end{definition}

\begin{prop}\label{full support on SX}
Suppose $\ZL$ is strongly dense in $\SXL$.  Then the Bowen-Margulis measure $m$ on $SX$ has full support on $\SXL$.
\end{prop}
\begin{proof}
For clarity, we write $m_{down}$ for the measure $m$ on $\GER$ and $m_{up}$ for the measure $m$ on $SX$ defined by $m_{up} (A) = m_{down} (\pi_x (A \cap \Zerowidth))$ for all Borel sets $A \subseteq SX$.  Our goal is to show that $\supp(m_{up}) = \SXL$.

We claim that $\overline{\ZL} \subseteq \supp(m_{up})$.  Since $\supp(m_{up})$ is closed, it suffices to show that $\ZL \subseteq \supp(m_{up})$.  So let $v \in \ZL$ and let $U \subseteq SX$ be an open set containing $v$.  Then $U \cap \Zerowidth$ is open in $\Zerowidth$ by definition, so $\pi_x (U \cap \Zerowidth)$ is open in $\pi_x (\Zerowidth)$ because $\pi_x \res{\Zerowidth}$ is a homeomorphism.  This means $\pi_x (U \cap \Zerowidth) = V \cap \pi_x (\Zerowidth)$ for some open set $V$ of $\GER$.  But $\pi_x (v) \in V$, so $V$ is nonempty.  Since $m_{down}$ has full support on $\RELR$ (\lemref{full-mu}), we have $m_{down} (V) > 0$.  But $\pi_x (\Zerowidth)$ has full measure in $\GER$, and thus
\[m_{up} (U)
= m_{down} (\pi_x (U \cap \Zerowidth))
= m_{down} (V \cap \pi_x (\Zerowidth))
= m_{down} (V)
> 0.\]
Hence $v \in \supp(m_{up})$, as claimed.

Thus $\SXL \subseteq \overline{\ZL} \subseteq \supp(m_{up}) \subseteq \SXL$, and therefore $\supp(m_{up}) = \SXL$.
\end{proof}

\begin{prop}\label{strong density}
Let $X$ be a proper \textnormal{CAT($0$)} space under a proper, non-elementary, isometric action by a group $\Gamma$ with a rank one element.  Suppose $m_{\Gamma}$ is finite.  The following are equivalent:
\begin{enumerate}
\item \label{strongly dense orbit}
$\SXL$ has a strongly dense orbit mod $\Gamma$.
\item \label{ZL strongly dense}
$\ZL$ is strongly dense in $\SXL$.
\item \label{full support for m}
$\ZL$ is nonempty and $\supp(m) = \SXL$.
\end{enumerate}
If $X$ is geodesically complete then all the above conditions hold.
And if any of the above conditions holds, the rank one axes are strongly dense in $\SXL$.
\end{prop}
\begin{proof}
Note
\itemrefstar{strongly dense orbit}%
$\implies$%
\itemrefstar{ZL strongly dense}
by \lemref{ZL is dense},
\itemrefstar{ZL strongly dense}%
$\implies$%
\itemrefstar{full support for m}
by \propref{full support on SX}, and
\itemrefstar{full support for m}%
$\implies$%
\itemrefstar{ZL strongly dense}
trivially.  For
\itemrefstar{ZL strongly dense}%
$\implies$%
\itemrefstar{strongly dense orbit},
let $w_0 \in \RL$ have weakly dense orbit in $\RL$ mod $\Gamma$; by
\itemrefstar{ZL strongly dense},
it suffices to prove that $w_0$ strongly $\Gamma$-accumulates on every $v \in \ZL$.  But this follows from \lemref{weak things become strong} because $w_0$ weakly $\Gamma$-accumulates on $v$ by choice of $w_0$.

The statement about geodesic completeness comes from observing that the proof of Theorem III.2.4 of \cite{ballmann} extends.  For the statement about rank one axes:
By
\itemrefstar{ZL strongly dense},
it suffices to prove that every $v \in \ZL$ is a strong limit of rank one axes.  But this follows from \lemref{weak things become strong} because the rank one axes of $X$ are weakly dense in $\RL$ by \propref{hamenstadt}.
\end{proof}

\begin{rem}
Geodesic completeness of $X$ is a strictly stronger condition than the equivalent conditions in \propref{strong density}, and strong density of rank one axes in $\SXL$ is strictly weaker, as the following two examples show.
\end{rem}

\begin{example}
Glue a flat strip isometrically along a closed geodesic in a closed, negatively curved manifold.  Call the universal cover of the resulting space $X$, and let $\Gamma$ be the associated group of deck transformations.  Here the rank one axes are strongly dense in $SX$, and $\ZL = \Zerowidth$ is weakly dense in $\SXL = SX$, but $\ZL$ is not strongly dense in $\SXL$.
\end{example}

\begin{example}
Take a flat cylinder $\mS^1 \times [0, 1]$, and attach a circle to each of $(p, 0)$ and $(p, 1)$ for some $p \in \mS^1$.  Call the universal cover of resulting space $X$, and let $\Gamma$ be the associated group of deck transformations ($\Gamma \cong \Z * \Z * \Z$).  Then $\ZL = \Zerowidth$ is strongly dense in $\SXL = SX$, but $X$ is not geodesically complete.
\end{example}

\thmref{main cocompactness} follows from Propositions \ref{Bowen-Margulis quotient measure} and \ref{strong density}:

\begin{theorem}[\thmref{main cocompactness}]
\label{cocompactness results}
\statecocompactness
\end{theorem}

\section{On Links}

It is convenient here to recall a few properties of links in CAT($\kappa$) spaces.  CAT($\kappa$) spaces, like CAT($0$) spaces, satisfy a triangle comparison requirement for small triangles, but the comparison is to a triangle in a complete, simply connected manifold of constant curvature $\kappa$.  One may always put $\kappa = 0$ in this section, which will be the only case we use later in this paper.

We will begin with the definition of a link, and then give a proof of \propref{geodesic completeness}.  Lytchak (\cite{lytchak05}) states a version of this result when $Y$ is CAT($1$) and compact, but we need to allow $Y$ to be proper in place of compact.

\begin{definition}
Let $Y$ be a CAT($\kappa$) space and $p \in Y$.  Write $\Sp{p}{Y}$ for the space of geodesic germs in $Y$ issuing from $p$, equipped with the metric $\angle_p$, and write $\link{p}{Y}$ (called the \defn{link of $p$} or the \defn{link of $Y$ at $p$}) for the completion of $\Sp{p}{Y}$ (cf.~ \cite{bridson} or \cite{lytchak05}).
\end{definition}

By Nikolaev's theorem (Theorem II.3.19 in \cite{bridson}), the link of $p \in Y$ is CAT($1$).  Note also that if $Y$ is proper and geodesically complete, $\Sp{p}{Y}$ is already complete.

\begin{definition}
Let $Y$ be a \textnormal{CAT($\kappa$)} space and $p \in Y$.  The \defn{tangent cone at $p$}, denoted $T_p Y$, is the Euclidean cone on $\link{p}{Y}$, the link of $p$.
\end{definition}

\begin{lemma}\label{Gromov-Hausdorff limits of geodesics}
Let $Y$ be compact and the Gromov-Hausdorff limit of a sequence $(Y_i)$ of compact metric spaces.  Then any sequence of isometric embeddings $\sigma_i \colon [0,1] \to Y_i$ has a subsequence that converges to an isometric embedding $\sigma \colon [0,1] \to Y$.
\end{lemma}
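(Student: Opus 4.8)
The plan is to reduce the statement to a routine Arzel\`a--Ascoli argument by first realizing the Gromov--Hausdorff convergence $Y_i \to Y$ inside a single ambient space. Since all the spaces involved are compact, there is a compact metric space $Z$ together with isometric embeddings $Y_i \into Z$ and $Y \into Z$ such that $Y_i \to Y$ in the Hausdorff metric within $Z$ (a standard fact about Gromov--Hausdorff convergence of compact spaces; see \cite{burago}). Identifying each $Y_i$ and $Y$ with its image in $Z$, I would interpret the asserted convergence $\sigma_i \to \sigma$ as uniform convergence of maps $[0,1] \to Z$.

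First I would observe that each $\sigma_i$, being an isometric embedding, satisfies $d(\sigma_i(s), \sigma_i(t)) = \abs{s - t}$ for all $s, t \in [0,1]$; in particular the family $\set{\sigma_i}$ is uniformly $1$-Lipschitz and takes values in the compact space $Z$. By the Arzel\`a--Ascoli Theorem, some subsequence $(\sigma_{i_k})$ converges uniformly to a map $\sigma \colon [0,1] \to Z$.

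Next I would check that $\sigma$ is an isometric embedding: since $\sigma_{i_k} \to \sigma$ pointwise and the metric on $Z$ is continuous, passing to the limit in $d(\sigma_{i_k}(s), \sigma_{i_k}(t)) = \abs{s - t}$ yields $d(\sigma(s), \sigma(t)) = \abs{s - t}$ for all $s, t$. To see that $\sigma$ lands in $Y$, I would note that for each $t$ the point $\sigma(t) = \lim_k \sigma_{i_k}(t)$ satisfies $d(\sigma(t), Y) = \lim_k d(\sigma_{i_k}(t), Y) \le \lim_k d_H(Y_{i_k}, Y) = 0$, so $\sigma(t) \in Y$ because $Y$ is closed. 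Thus $\sigma \colon [0,1] \to Y$ is the desired isometric embedding.

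The only genuine subtlety---and the step I would be most careful about---is the first one: making precise what ``converges'' means for maps whose targets are a priori distinct metric spaces. The common-space realization of Gromov--Hausdorff convergence is exactly what legitimizes this, and once it is in place every remaining step is routine, using only equicontinuity from the isometric-embedding hypothesis, Arzel\`a--Ascoli, continuity of the metric, and closedness of $Y$.
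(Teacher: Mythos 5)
Your proof is correct and follows essentially the same route as the paper's (much terser) argument: embed the spaces $Y_i$ together with $Y$ into a single compact metric space and apply the Arzel\`a--Ascoli Theorem. The details you supply---uniform $1$-Lipschitz equicontinuity, preservation of distances in the limit, and landing in $Y$ via Hausdorff convergence and closedness---are exactly the steps the paper leaves implicit.
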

\begin{proof}
A subsequence of the spaces $Y_i$, together with $Y$, may be isometrically embedded into a single compact metric space.  Apply the Arzel\`a-Ascoli theorem.
\end{proof}

\begin{lemma}[Theorem 9.1.48 in \cite{burago}]
\label{tangent cone is a Gromov-Hausdorff limit}
Let $(Y, d)$ be a proper, geodesically complete \textnormal{CAT($\kappa$)} space, $\kappa \in \R$, and let $p \in Y$.  Fix $r > 0$, and for $t \in (0,1]$, let $(Y_t, d_t)$ be the compact metric space $(\overline{B}_Y (p, r t), \frac{1}{t} d)$.  Let $(Y_0, d_0)$ be the closed ball of radius $r$ about the cone point $\bar p$ in the tangent cone $T_p Y$ at $p$.  Then $Y_t \to Y_0$ in the Gromov-Hausdorff metric as $t \to 0$.
\end{lemma}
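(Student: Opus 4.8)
The plan is to exhibit, for each small $t$, an explicit correspondence between $Y_t$ and $Y_0$ whose distortion tends to $0$ as $t \to 0$; since the Gromov--Hausdorff distance is at most half the distortion, this gives the convergence. Since the section allows us to take $\kappa = 0$, I will work in the CAT($0$) case, where comparison angles are monotone; general $\kappa$ follows from the standard fact that the $M_\kappa$- and Euclidean comparison angles of a triangle of diameter $O(t)$ differ by $O(t^2)$. For $x \in \overline B_Y(p, rt)$ with $x \neq p$ let $v_x \in \link{p}{Y}$ be the direction at $p$ of the unique geodesic $[p,x]$, and set $s_x = \frac{1}{t} d(p,x) \in [0,r]$. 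Define the correspondence $\mathcal R_t$ by pairing $x$ with $(v_x, s_x) \in Y_0$ (and $p$ with the cone point $\bar p$). By definition of the Euclidean comparison angle, $d_t(x,y)^2 = s_x^2 + s_y^2 - 2 s_x s_y \cos \overline{\angle}_p(x,y)$, while the cone metric gives $d_0((v_x,s_x),(v_y,s_y))^2 = s_x^2 + s_y^2 - 2 s_x s_y \cos \angle_p(v_x, v_y)$. Hence the distortion is controlled by $2 s_x s_y\,\lvert\cos \overline{\angle}_p(x,y) - \cos\angle_p(v_x,v_y)\rvert \le 2r^2\,\lvert\overline{\angle}_p(x,y) - \angle_p(v_x,v_y)\rvert$, and everything reduces to estimating how far the comparison angle at scale $t$ is from the Alexandrov angle between the two directions.

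The heart of the argument --- and the step I expect to be the main obstacle --- is therefore the \emph{uniform} convergence
\[
\omega(t) := \sup\nolimits_{x,y \in \overline B_Y(p,rt),\, x,y \neq p} \bigl(\overline{\angle}_p(x,y) - \angle_p(v_x,v_y)\bigr) \longrightarrow 0 \quad (t \to 0).
\]
Monotonicity of comparison angles already gives $\overline{\angle}_p(x,y) \ge \angle_p(v_x,v_y)$, so $\omega(t) \ge 0$ and, for each \emph{fixed} pair of geodesics, $\overline{\angle}_p$ decreases to the Alexandrov angle; the difficulty is to make this uniform as the branching geometry of $Y$ near $p$ varies, and it is exactly here that properness must be used. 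I would argue by contradiction: if $\omega(t_n) \ge \epsilon_0$ along some $t_n \to 0$, choose geodesics $\gamma_n, \gamma'_n$ from $p$ and points $x_n = \gamma_n(s_n)$, $y_n = \gamma'_n(u_n)$ with $s_n, u_n \to 0$ realizing the excess. Properness lets me invoke the Arzel\`a--Ascoli theorem (cf.\ \lemref{Gromov-Hausdorff limits of geodesics}) to pass to unit-speed limits $\gamma_n \to \gamma_\infty$, $\gamma'_n \to \gamma'_\infty$ on $[0,r]$. For a fixed auxiliary scale $\sigma > 0$, monotonicity gives $\overline{\angle}_p(\gamma_n(\sigma),\gamma'_n(\sigma)) \ge \overline{\angle}_p(x_n,y_n) \ge \angle_p(\gamma_n,\gamma'_n) + \epsilon_0$ once $n$ is large. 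Letting $n \to \infty$ (continuity of the comparison angle in its endpoints) and then $\sigma \to 0$ yields $\angle_p(\gamma_\infty,\gamma'_\infty) \ge \limsup_n \angle_p(\gamma_n,\gamma'_n) + \epsilon_0$, which contradicts the lower semicontinuity of the Alexandrov angle (\cite[Proposition 9.5(2)]{bridson}), forcing $\angle_p(\gamma_\infty,\gamma'_\infty) \le \liminf_n \angle_p(\gamma_n,\gamma'_n)$.

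With $\omega(t) \to 0$ in hand, the distortion bound becomes $\lvert d_t(x,y) - d_0((v_x,s_x),(v_y,s_y))\rvert \le r\sqrt{2\,\omega(t)} \to 0$ on all paired points. It remains to verify that $\mathcal R_t$ is a genuine correspondence, i.e.\ that every $(v,\rho) \in Y_0$ is nearly paired. Since $\link{p}{Y}$ is by definition the completion of the geodesic directions at $p$, those directions are dense in the link, and for a geodesic direction $v$ the point $\gamma_v(t\rho)$ is defined as soon as $\gamma_v$ has length $\ge t\rho$, which holds for $t$ small; properness (again via Arzel\`a--Ascoli, as in \lemref{Gromov-Hausdorff limits of geodesics}, applied to the rescaled spheres) upgrades this to the statement that the paired set $\{(v_x,s_x)\}$ is $o(1)$-dense in $Y_0$, so the one-sided gap contributes another term tending to $0$. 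Combining the distortion and covering estimates gives $d_{GH}(Y_t, Y_0) \to 0$; compactness of $Y_0$ then follows for free, being the Gromov--Hausdorff limit of the compact spaces $Y_t$.
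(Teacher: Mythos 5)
Your proof takes a genuinely different route from the paper's, but it has a gap at precisely the step you flag as the crux. Your geodesics $\gamma_n, \gamma'_n$ are the geodesics $[p,x_n]$, $[p,y_n]$, and these are defined only on $[0,s_n]$, $[0,u_n]$ with $s_n, u_n \le r t_n \to 0$. The lemma assumes only that $Y$ is a proper CAT($\kappa$) space, not that it is geodesically complete, so these geodesics need not extend beyond $x_n$ and $y_n$ at all. Hence ``pass to unit-speed limits $\gamma_n \to \gamma_\infty$, $\gamma'_n \to \gamma'_\infty$ on $[0,r]$'' is not available --- the Arzel\`a--Ascoli theorem applied to maps whose domains shrink to a point yields only the constant map at $p$ --- and the quantity $\overline{\angle}_p(\gamma_n(\sigma), \gamma'_n(\sigma))$ for a fixed $\sigma > 0$ is simply undefined for large $n$, so the monotonicity inequality you need has no left-hand side. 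The same problem infects the density step: that $\gamma_v(t\rho)$ is eventually defined for each \emph{fixed} germ direction $v$ is a pointwise statement, and upgrading it to uniform $o(1)$-density of the paired set in $Y_0$ requires total boundedness of $Y_0$; your appeal to ``Arzel\`a--Ascoli applied to the rescaled spheres'' does not supply this (those spheres live in the varying spaces $(Y, \frac{1}{t}d)$, with no common ambient space), and compactness of $Y_0$ cannot be used here, since you obtain it only as a corollary of the convergence being proved.

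Moreover, under the stated hypotheses these gaps cannot be patched, because the uniform statements you need are then false. Glue countably many Euclidean triangles, each with angle $\pi/2$ at a vertex, at a single common point $p$, with diameters tending to $0$: the result is a compact (hence proper) CAT($0$) space, but the link at $p$ is an infinite disjoint union of arcs at mutual distance $\pi$, so $Y_0$ contains an infinite $2r$-separated set, is not totally bounded, and cannot be a Gromov--Hausdorff limit of the compact spaces $Y_t$. Some form of geodesic extendability must therefore enter any proof. (In fairness, the paper's own proof uses it silently as well: its claim that $\rho_t$ is surjective requires extending $[p,x]$ to a geodesic of length $d(p,x)/t$, and the lemma is only ever applied to geodesically complete spaces, in \propref{geodesic completeness}.) If geodesic completeness is granted, your outline does close up: the geodesics may be taken on $[0,r]$; your contradiction argument runs (note that what you actually need is continuity of the vertex angle $\angle_p$ under uniform convergence of geodesics issuing from $p$, which follows from convexity of the metric --- the lower semicontinuity result you cite concerns the Tits angle on $\bd X$, a different statement); and every point of $Y_0$ is then \emph{exactly} paired, so the density step disappears. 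Even so, the paper's argument is shorter where yours is longest: it parametrizes every $Y_t$ radially by the single compact space $Y_1$ via $\rho_t(y) = \sigma_y(t)$, observes that the pulled-back metrics $f_t(y,z) = d_t(\rho_t(y), \rho_t(z))$ satisfy $f_s \le f_t$ for $s \le t$ by convexity (up to a factor $C(t) \to 1$ when $\kappa > 0$), and extracts uniform convergence from this monotonicity by a Dini-type argument, with no angle estimates and no compactness contradiction at all.
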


\begin{prop}\label{geodesic completeness}
For any proper \textnormal{CAT($\kappa$)} space $Y$, $\kappa \in \R$, the following are equivalent:
\begin{enumerate}
\item  $Y$ is geodesically complete.
\item  For every point $p \in Y$, the tangent cone $T_p Y$ at $p$ is geodesically complete.
\item  For every point $p \in Y$, the link $\link{p}{Y}$ of $p$ is geodesically complete and has at least two points.
\item  For every point $p \in Y$, every point in the link $\link{p}{Y}$ of $p$ has at least one antipode---that is, for every $\alpha \in \link{p}{Y}$, there is some $\beta \in \link{p}{Y}$ such that $d(\alpha, \beta) \ge \pi$.
\end{enumerate}
\end{prop}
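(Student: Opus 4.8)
The plan is to reduce everything to a single \emph{local extension criterion}: in a proper CAT($\kappa$) space, a geodesic terminating at a point $p$ with terminal direction $\alpha \in \link{p}{Y}$ prolongs beyond $p$ if and only if $\alpha$ has an antipode in $\link{p}{Y}$ (a point at angular distance $\ge \pi$). Indeed, concatenating with a geodesic germ in an antipodal direction $\beta$ yields a local geodesic precisely because $\angle_p(\alpha,\beta)=\pi$, and properness guarantees that every point of the link is the direction of an honest geodesic germ, so antipodes are realized by genuine geodesics. This criterion gives the equivalence of (1) and (4) immediately: if $Y$ is geodesically complete, then realizing $\alpha$ by a germ and prolonging that geodesic backward through $p$ produces an antipode of $\alpha$; conversely, if every direction in every link has an antipode, every geodesic prolongs at each endpoint, so $Y$ is geodesically complete.

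Next I would package (2) with (4) and (3) through the Euclidean-cone structure of $T_p Y = C(L)$, where $L = \link{p}{Y}$. Applying the already-established equivalence of (1) and (4) to the proper CAT($0$) space $T_p Y$, its geodesic completeness is equivalent to every point of every link of $T_p Y$ having an antipode. The link of the cone point is $L$ itself, recovering exactly condition (4); the link at any other point is the spherical suspension $S^0 * \link{\theta}{L}$, whose two poles are mutually antipodal and whose non-polar points have antipodes iff their $\link{\theta}{L}$-coordinate does. Applying the equivalence of (1) and (4) now to the proper CAT($1$) space $L$, the latter says exactly that $L$ is geodesically complete. Hence (2) holds iff (4) holds and $L$ is geodesically complete. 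Combining this with the elementary CAT($1$) fact that a geodesically complete CAT($1$) space with at least two points has antipodes everywhere — extend a germ to arclength $\pi$, noting that a local geodesic of length $<\pi$ in a CAT($1$) space is minimizing, so its endpoint lies at distance exactly $\pi$ — yields the equivalence of (2) and (3), and also of (2) with (4).

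The genuinely non-local step, which I expect to be the main obstacle, is the implication from (1) to (2): geodesic completeness of $Y$ must be \emph{upgraded} to geodesic completeness of the tangent cone, equivalently the first-order antipode condition (4) must force the link $L$ to be geodesically complete. Here I would use the Gromov--Hausdorff machinery. By \lemref{tangent cone is a Gromov-Hausdorff limit}, a closed ball of $T_p Y$ is the Gromov--Hausdorff limit of the rescaled balls $Y_t = (\overline{B}(p, rt), \frac{1}{t}d)$; realize a convergent subsequence together with the limit inside a common compact space. Given a geodesic segment $c$ in this ball of $T_p Y$, I would approximate its endpoints by points of $Y_t$, join them by the unique geodesics of $Y_t$, and invoke \lemref{Gromov-Hausdorff limits of geodesics} so that these converge back to $c$. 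Since $Y$ is geodesically complete, each approximating geodesic prolongs within $Y_t$ to one crossing the whole ball from boundary to boundary; these prolongations have uniformly bounded length, so by \lemref{Gromov-Hausdorff limits of geodesics} a subsequence converges to an isometric embedding extending $c$ out to $\partial B(\bar p, r)$. As $r$ is arbitrary and $T_p Y$ is a cone, this shows $T_p Y$ is geodesically complete. The reverse implication from (2) to (1) is then just the cone-point half of the cone analysis, namely (2) $\Rightarrow$ (4) $\Rightarrow$ (1), which closes the cycle.

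Thus the three antipode and link conditions — (3), (4), and the cone-point content of (2) — are all \emph{first order} and interrelate by soft link calculus, but none of them visibly forces the \emph{higher-order} geodesic completeness of the link. Supplying that is exactly the work done by the Gromov--Hausdorff approximation, and the delicate point is passing geodesics and their prolongations back and forth between the rescaled spaces $Y_t$ and their limit while keeping lengths controlled, so that the Arzel\`a--Ascoli argument embodied in \lemref{Gromov-Hausdorff limits of geodesics} applies.
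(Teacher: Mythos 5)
Your reduction of (1) $\Rightarrow$ (2) to \lemref{Gromov-Hausdorff limits of geodesics} and \lemref{tangent cone is a Gromov-Hausdorff limit} is exactly the paper's argument, and you are right that this is the genuinely non-local step. The gap is in the claim that carries every other arrow of your scheme: ``properness guarantees that every point of the link is the direction of an honest geodesic germ.'' This is false. The link is by definition the \emph{completion} of the space of geodesic germs, and completion points need not be realized: in the proper CAT($0$) space $Y = \setp{(x,y) \in \R^2}{y \ge x^2}$ (closed and convex in $\R^2$), the germs at $p = (0,0)$ are exactly the directions making Euclidean angle $\theta \in (0,\pi)$ with the positive $x$-axis, so $\link{p}{Y}$ is the closed interval $[0,\pi]$, and its two endpoints are not germs. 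Realizability of all link directions \emph{does} hold when $Y$ is geodesically complete, but that is a theorem requiring proof (compactness of the space of directions, via Arzel\`a--Ascoli together with monotonicity of comparison angles), and it is precisely unavailable in your (4) $\Rightarrow$ (1): there, condition (4) hands you an antipode $\beta$ of the incoming direction $\alpha$ only in the \emph{completed} link; if $\beta$ is not a germ, you can only choose germs $\beta_n$ with $\angle_p(\alpha,\beta_n) \to \pi$, and these may be realizable only by segments of lengths $r_n \to 0$, in which case the concatenations converge back to the original geodesic and produce no extension. So the local extension criterion, as you justify it, does not prove (4) $\Rightarrow$ (1). (The concatenation idea itself is also how the paper treats (1) $\Leftrightarrow$ (4); the paper, however, never claims that realizability follows from properness.)

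The same issue sinks the properness assertions you need in order to apply your criterion to $T_p Y$ and to $L = \link{p}{Y}$: tangent cones and links of proper CAT($\kappa$) spaces need not be proper. Glue segments of length $1/n$, $n \in \N$, to a single point $p$ of $\R^2$; the result is a proper CAT($0$) space, but $\link{p}{Y}$ is the disjoint union of a circle and an infinite discrete set with pairwise distances $\pi$ --- complete but not compact --- so $T_p Y$, the Euclidean cone on it, is not proper. Compactness of links is again a consequence of geodesic completeness, so invoking it in the implications out of (2) is circular. This is exactly where the paper routes differently, and why: it deduces (2) $\Rightarrow$ (3) from the fact that radial projection gives a bijection between geodesics of $T_p Y$ missing the cone point and geodesics of the link, and (3) $\Rightarrow$ (4) from CAT($1$) geometry of the completed link alone (two points at distance less than $\pi$ are joined by a geodesic, which by geodesic completeness of the link extends to a local geodesic of length $\pi$, necessarily minimizing in a CAT($1$) space). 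Neither argument needs link directions to be realized, nor the cone or link to be proper. To salvage your architecture you would first have to prove, under the hypothesis actually available in each implication, that links are compact and every link direction is realized --- which is essentially the content you were trying to get for free from properness.
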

\begin{proof}
The implication $(1) \implies (2)$ is clear from Lemmas \ref{Gromov-Hausdorff limits of geodesics} and \ref{tangent cone is a Gromov-Hausdorff limit}.  And $(2) \implies (3)$ is immediate from the fact that radial projection $T_p Y \to \link{p}{Y}$ is a bijective map on geodesics (see the proof of Proposition I.5.10(1) in \cite{bridson}).  Since $Y$ is CAT($\kappa$), each component of the link $\link{p}{Y}$ of $p$ is CAT($1$) and therefore has no geodesic circles of length $< \pi$; thus $(3) \implies (4)$.

Finally, we prove $(4) \implies (1)$.  Let $r > 0$ be small enough that geodesics in $Y$ of length $< 3r$ are uniquely determined by their endpoints.  It suffices to show that for $p, q \in Y$ with $d(p, q) \le r$, there is some $y_0 \in Y$ such that $d(p, y_0) = r$ and $b_q (y_0, p) = d(y_0, p)$  (recall $b_q (y_0, p) = d(q, y_0) - d(q, p)$).  So let $f_p (y) = b_q (y, p)$.  Let $\delta > 0$, and define $A_{\delta} = \setp{y \in Y}{f_p (y) \ge (1 - \delta) d(y, p)}$.  Since $Y$ is proper and $A_{\delta}$ is closed, $A_{\delta} \cap \cl{B}(p, r)$ compact.  Thus some $y = y_{\delta} \in A_{\delta} \cap \cl{B}(p, r)$ maximizes $f_p$ on $A_{\delta} \cap \cl{B}(p, r)$.  If $d(p, y) < r$ then by $(4)$ and the density of $\Sp{p}{Y}$ in $\link{p}{Y}$, some $z \in Y$ with $d(y, z) \le r - d(p, y)$ satisfies $f_y (z) \ge (1 - \delta) d(z, y) > 0$ (assume $\delta < 1$).  Since $f_p (z) = f_y (z) + f_p (y)$ by the cocyle property of Busemann functions, we obtain $f_p (z) > f_p (y)$; furthermore, $z \in A_{\delta}$ by the triangle inequality.  But this contradicts the maximality of $y$, hence we must have $d(p, y) = r$.  Now take a sequence $\delta_n \to 0$ and let $y_0$ be a limit point of $y_{\delta_n}$.  Then $d(p, y_0) = r$ and $f_p (y_0) = d(y_0, p)$, as required.
\end{proof}

\section{Cross-Ratios}

Our proof of mixing of the geodesic flow on Bowen-Margulis measures is inspired by Babillot's treatment for the smooth manifold case (\cite{babillot}), which involves the cross-ratio for endpoints of geodesics.

\begin{standing hypothesis}
In this section, let $\Gamma$ be a group acting by isometries on a proper CAT($0$) space $X$.
\end{standing hypothesis}

First we need to describe the space where cross-ratios will be defined.

\begin{definition}\label{quadrilaterals}
For $v^-, w^-, v^+, w^+ \in \bd X$, call $(v^-, w^-, v^+, w^+)$ a \defn{quadrilateral} if there exist rank one geodesics with endpoints $(v^-, v^+)$, $(w^-, w^+)$, $(v^-, w^+)$, and $(w^-, v^+)$.  Denote the set of quadrilaterals by $\QE$, and let $\QEL = \QE \cap \; \Lam^4$.
\end{definition}

\begin{definition}\label{cross-ratio def}
For a quadrilateral $(v^-, w^-, v^+, w^+)$, define its \defn{cross-ratio} by
\[\cratio (v^-, w^-, v^+, w^+) = \beta_p (v^-, v^+) + \beta_p (w^-, w^+) - \beta_p (v^-, w^+) - \beta_p (w^-, v^+),\]
for $p \in X$ arbitrary.
\end{definition}

Note that we removed reference to $p \in X$ in writing $\cratio$ in \defref{cross-ratio def}.  This omission is justified by the following lemma.

\begin{lemma}\label{no p in cratio}
The cross-ratio $\cratio (\xi, \xi', \eta, \eta')$ of a quadrilateral $(\xi, \xi', \eta, \eta')$ does not depend on choice of $p \in X$.
\end{lemma}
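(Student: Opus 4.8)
The plan is to show that moving the base point from $p$ to $q$ shifts each $\beta$-term in the cross-ratio by an explicit additive constant, and that these constants cancel in the alternating sum of \defref{cross-ratio def}. First I would record the transformation law for $\beta_p$ under a change of base point. For any pair $(\zeta, \zeta') \in \GE$, the cocycle property gives $b_\zeta(x, p) = b_\zeta(x, q) + b_\zeta(q, p)$ and likewise for $\zeta'$, so $(b_\zeta + b_{\zeta'})(x, p) = (b_\zeta + b_{\zeta'})(x, q) + b_\zeta(q, p) + b_{\zeta'}(q, p)$. Since $b_\zeta(q, p) + b_{\zeta'}(q, p)$ does not depend on $x$, taking the infimum over $x \in X$ yields
\[\beta_p(\zeta, \zeta') = \beta_q(\zeta, \zeta') + b_\zeta(q, p) + b_{\zeta'}(q, p).\]

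Next I would observe that all four $\beta$-terms in $\cratio(\xi, \xi', \eta, \eta')$ are finite, so the alternating sum is well-defined and the rearrangement above is legitimate: by \defref{quadrilaterals} each of the pairs $(\xi, \eta)$, $(\xi', \eta')$, $(\xi, \eta')$, $(\xi', \eta)$ is the endpoint pair of a rank one geodesic, hence lies in $\GE$, and \lemref{beta} then guarantees that the corresponding $\beta$-value is finite. Substituting the transformation law into the definition and writing $c_\zeta = b_\zeta(q, p)$ for each boundary point $\zeta$, the difference $\cratio^{(p)} - \cratio^{(q)}$ equals $(c_\xi + c_\eta) + (c_{\xi'} + c_{\eta'}) - (c_\xi + c_{\eta'}) - (c_{\xi'} + c_\eta)$. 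Each of $c_\xi, c_{\xi'}, c_\eta, c_{\eta'}$ appears exactly once with a plus sign and once with a minus sign, because each boundary point is an endpoint of precisely two of the four geodesics---one counted positively and one negatively in the cross-ratio. Hence the correction vanishes identically and $\cratio$ takes the same value whether computed at $p$ or at $q$.

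There is no substantive obstacle here: the entire content is the combinatorial cancellation, which is forced by the symmetric structure of the cross-ratio. The only point needing a moment's care is confirming finiteness of all four terms so that the terms may be freely rearranged, and this is exactly what the quadrilateral hypothesis supplies through \lemref{beta}.
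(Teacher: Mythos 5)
Your proof is correct, but it is organized differently from the paper's. You derive a general base-point transformation law, $\beta_p(\zeta, \zeta') = \beta_q(\zeta, \zeta') + b_\zeta(q, p) + b_{\zeta'}(q, p)$, directly from the infimum definition of $\beta_p$ and the cocycle property, and then let the four correction terms cancel in the alternating sum; your attention to finiteness (each of the four pairs lies in $\GE$, so \lemref{beta} makes every term finite) is precisely what legitimizes the rearrangement. The paper instead chooses geodesics $v_0 \in \emap^{-1}(\xi, \eta)$, $v_1 \in \emap^{-1}(\xi, \eta')$, $v_2 \in \emap^{-1}(\xi', \eta')$, $v_3 \in \emap^{-1}(\xi', \eta)$, uses \lemref{beta} to evaluate each $\beta_p$ at the corresponding footpoint, and regroups via the cocycle property into the manifestly $p$-free expression $\cratio(\xi, \xi', \eta, \eta') = b_\xi(v_0(0), v_1(0)) + b_\eta(v_0(0), v_3(0)) + b_{\eta'}(v_2(0), v_1(0)) + b_{\xi'}(v_2(0), v_3(0))$. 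Both arguments turn on the same combinatorial cancellation---each boundary point appears in the cross-ratio once with each sign---but yours is leaner and slightly more general: it never selects geodesics, uses \lemref{beta} only for finiteness, and your transformation law is the base-point analogue of the paper's later \lemref{**} (the isometry-equivariance of $\beta_x$). What the paper's longer route buys is the explicit footpoint formula for the cross-ratio, which is reused verbatim in the proof of \lemref{twisting} to identify $v_4 \sim g^{t_0} v_0$; your version would require rederiving that formula there.
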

\begin{proof}
Let $v_0 \in \emap^{-1} (\xi, \eta)$, $v_1 \in \emap^{-1} (\xi, \eta')$, $v_2 \in \emap^{-1} (\xi', \eta')$, and $v_3 \in \emap^{-1} (\xi', \eta)$.  By \lemref{beta} and the definition of the cross-ratio,
\begin{align*}
\cratio (\xi, \xi', \eta, \eta')
&= [b_\xi (v_0 (0), p) + b_\eta (v_0 (0), p)] + [b_{\xi'} (v_2 (0), p) + b_{\eta'} (v_2 (0), p)] \\
&\qquad - [b_\xi (v_1 (0), p) + b_{\eta'} (v_1 (0), p)] - [b_{\xi'} (v_3 (0), p) + b_\eta (v_3 (0), p)]
\end{align*}
for any $p \in X$.  Using the cocycle property of Busemann functions, this gives us
\begin{align*}
\cratio (\xi, \xi', \eta, \eta')
&= b_\xi (v_0 (0), v_1 (0)) + b_\eta (v_0 (0), v_3 (0)) \\
&\qquad + b_{\eta'} (v_2 (0), v_1 (0)) + b_{\xi'} (v_2 (0), v_3 (0)),
\end{align*}
which is independent of $p \in X$.
\end{proof}

Our main interest in the cross-ratio is its connection with the horospherical foliations of $\SXL$.  The next lemma, due to Otal (\cite{otal92}), describes how the cross-ratio detects, to some extent, the non-integrability of the stable and unstable horospherical foliations.

\begin{lemma}\label{twisting}
Suppose $(\xi, \xi', \eta, \eta') \in \QE$.  Let $v_0 \in \emap^{-1} (\xi, \eta)$, and recursively choose $v_1 \in H^u (v_0)$ with $v_1^+ = \eta'$, $v_2 \in H^s (v_1)$ with $v_2^- = \xi'$, $v_3 \in H^u (v_2)$ with $v_3^+ = \eta$, and $v_4 \in H^s (v_3)$ with $v_4^- = \xi$.  Then $v_4 \sim g^{t_0} v_0$, for $t_0 = \cratio (\xi, \xi', \eta, \eta')$.
\end{lemma}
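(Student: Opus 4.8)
The plan is to fix a basepoint $x \in X$ and reduce the conclusion $v_4 \sim g^{t_0} v_0$ to a single scalar identity, then verify that identity by bookkeeping Busemann values. First I would record the endpoints produced by the four moves: the $H^u$-steps preserve the backward endpoint and the $H^s$-steps preserve the forward one, so $v_1$ joins $\xi$ to $\eta'$, $v_2$ joins $\xi'$ to $\eta'$, $v_3$ joins $\xi'$ to $\eta$, and $v_4$ joins $\xi$ to $\eta$. In particular $v_4$ and $v_0$ share the endpoint pair $(\xi,\eta)$, hence are parallel. By definition of $\sim$ (\propref{equivalent-geodesics}), since $v_4^\pm = v_0^\pm$ the relation $v_4 \sim g^{t_0} v_0$ is equivalent to equality of third coordinates $\pi_x(v_4) = \pi_x(g^{t_0} v_0)$, that is $b_\xi(v_4(0), x) = b_\xi(v_0(t_0), x) = t_0 + b_\xi(v_0(0), x)$, using the normalization $b_{v_0^-}(v_0(s), v_0(0)) = s$ recorded in the proof of \propref{equivalent-geodesics}. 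Writing $\alpha_i = b_{v_i^-}(v_i(0), x)$ and $\omega_i = b_{v_i^+}(v_i(0), x)$, the goal becomes the single equation $\alpha_4 - \alpha_0 = t_0$.

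Next I would translate each horosphere condition into an identity among the $\alpha_i$ and $\omega_i$ via the cocycle property. Each $H^u$-step fixes the backward Busemann value: from $v_1 \in H^u(v_0)$ we get $b_\xi(v_1(0), v_0(0)) = 0$, hence $\alpha_1 = \alpha_0$, and likewise $v_3 \in H^u(v_2)$ gives $\alpha_3 = \alpha_2$. Each $H^s$-step fixes the forward one: $v_2 \in H^s(v_1)$ gives $\omega_2 = \omega_1$ and $v_4 \in H^s(v_3)$ gives $\omega_4 = \omega_3$. (Existence of the four geodesics is exactly the quadrilateral hypothesis $(\xi,\xi',\eta,\eta') \in \QE$, together with the fact that flowing a geodesic by $g^t$ moves its footpoint monotonically through every level of the relevant horosphere.)

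Then I would expand the cross-ratio. By \lemref{beta}, for any geodesic realizing a given endpoint pair, $\beta_x$ of that pair equals the sum of the two Busemann values at its footpoint; applying this to $v_0, v_1, v_2, v_3$ gives $\beta_x(\xi,\eta) = \alpha_0 + \omega_0$, $\beta_x(\xi',\eta') = \alpha_2 + \omega_2$, $\beta_x(\xi,\eta') = \alpha_1 + \omega_1$, and $\beta_x(\xi',\eta) = \alpha_3 + \omega_3$. Substituting these into \defref{cross-ratio def} and using $\alpha_1 = \alpha_0$, $\alpha_3 = \alpha_2$, $\omega_2 = \omega_1$, all four $\alpha$-terms and the $\omega_1,\omega_2$-terms cancel, leaving $t_0 = \cratio(\xi,\xi',\eta,\eta') = \omega_0 - \omega_3 = \omega_0 - \omega_4$. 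Finally, because $v_0 \parallel v_4$ share the endpoint pair $(\xi,\eta)$, \lemref{beta} gives $\alpha_0 + \omega_0 = \beta_x(\xi,\eta) = \alpha_4 + \omega_4$, so $\omega_0 - \omega_4 = \alpha_4 - \alpha_0$. Combining the last two displays yields $\alpha_4 - \alpha_0 = t_0$, which is exactly the scalar identity above, so $v_4 \sim g^{t_0} v_0$.

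The computation is routine once it is set up, so the real content lies in the reductions rather than in any hard analysis. The main things requiring care are: recognizing that the whole statement collapses to one equation about a single Busemann coordinate; converting each horosphere membership into the correct ``height'' equation while keeping the $H^u$/$H^s$ and backward/forward bookkeeping straight; and using parallelism of $v_0$ and $v_4$ to trade the forward shift $\omega_0 - \omega_4$ for the backward shift $\alpha_4 - \alpha_0$. I expect the sign conventions in these three translations to be the only place where genuine care is needed.
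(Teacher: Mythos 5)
Your proof is correct and follows essentially the same route as the paper's: both expand the cross-ratio at the footpoints of $v_0, \dotsc, v_4$, use the four horosphere conditions to cancel terms, and identify the flow parameter via \propref{equivalent-geodesics}. The only cosmetic difference is that the paper works basepoint-free with the footpoint formula from \lemref{no p in cratio} and pins down $t_0$ using the forward Busemann coordinate $b_\eta$, whereas you carry the basepoint $x$ throughout and use the backward coordinate $b_\xi$; the underlying algebra is identical.
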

\begin{proof}
As in the proof of \lemref{no p in cratio}, we know
\begin{align*}
\cratio (\xi, \xi', \eta, \eta')
&= b_\xi (v_0 (0), v_1 (0)) + b_\eta (v_0 (0), v_3 (0)) \\
&\qquad + b_{\eta'} (v_2 (0), v_1 (0)) + b_{\xi'} (v_2 (0), v_3 (0)).
\end{align*}
But $b_\xi (v_0 (0), v_1 (0)) = b_{\eta'} (v_1 (0), v_2 (0)) = b_{\xi'} (v_2 (0), v_3 (0)) = b_\eta (v_3 (0), v_4 (0)) = 0$ by choice of $v_1, \dotsc, v_4$, so
\[\cratio (\xi, \xi', \eta, \eta')
= b_\eta (v_0 (0), v_3 (0))
= b_\eta (v_0 (0), v_4 (0))\]
by the cocycle property of Busemann functions.  On the other hand, $v_4 \parallel v_0$ by construction, and so by \propref{equivalent-geodesics}, $v_4 \sim g^{t} v_0$ for the value $t \in \R$ such that $b_\eta (v_0 (t), v_4 (0)) = 0$.  But
\[b_\eta (v_0 (t), v_4 (0)) = -t + b_\eta (v_0 (0), v_4 (0)) = -t + \cratio (\xi, \xi', \eta, \eta')\]
for all $t$, which shows $v_4 \sim g^{t_0} v_0$ for $t_0 = \cratio (\xi, \xi', \eta, \eta')$.
\end{proof}

The following proposition summarizes some of the basic properties of the cross-ratio (cf.~ \cite{ham97}).  The proofs are straightforward.

\begin{prop}
The cross-ratio on $\QE$ is continuous and satisfies all the following.
\begin{enumerate}
\item
$\cratio$ is invariant under the diagonal action of $\Isom X$ on $(\bd X)^4$,
\item
$\cratio (\xi, \xi', \eta, \eta') = -\cratio (\xi, \xi', \eta', \eta)$,
\item
$\cratio (\xi, \xi', \eta, \eta') = \cratio (\eta, \eta', \xi, \xi')$,
\item \label{cratio cocycle}
$\cratio (\xi, \xi', \eta, \eta') + \cratio (\xi, \xi', \eta', \eta'') = \cratio (\xi, \xi', \eta, \eta'')$, and
\item \label{triple identity for cratio}
$\cratio (\xi, \xi', \eta, \eta') + \cratio (\xi', \eta, \xi, \eta') + \cratio (\eta, \xi, \xi', \eta') = 0$.
\end{enumerate}
\end{prop}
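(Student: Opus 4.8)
The plan is to reduce every assertion to the defining formula
\[\cratio(\xi, \xi', \eta, \eta') = \beta_p(\xi, \eta) + \beta_p(\xi', \eta') - \beta_p(\xi, \eta') - \beta_p(\xi', \eta)\]
and to exploit two elementary features: $\beta_p$ is symmetric, $\beta_p(\zeta, \zeta') = \beta_p(\zeta', \zeta)$, directly from $\beta_p(\zeta, \zeta') = \inf_{x}(b_\zeta + b_{\zeta'})(x, p)$; and the cross-ratio is independent of $p$ by \lemref{no p in cratio}. With these in hand the four algebraic identities (2)--(5) become bookkeeping, while invariance and continuity each require a single observation. Throughout I would note that (4) and (5) are asserted only where all the $4$-tuples appearing are themselves quadrilaterals, so that every $\beta_p$-term is finite (it lies in $\GE$ by \lemref{beta}) and the manipulations are legitimate.

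For the algebraic identities I would simply substitute into the formula. For (2), swapping $\eta$ and $\eta'$ interchanges the two positive with the two negative $\beta_p$-terms, giving the sign change. For (3), the symmetry of $\beta_p$ turns $\cratio(\eta, \eta', \xi, \xi')$ term-by-term into $\cratio(\xi, \xi', \eta, \eta')$. For (4), writing out both summands, the terms $\pm\beta_p(\xi, \eta')$ and $\pm\beta_p(\xi', \eta')$ cancel in pairs, leaving exactly $\cratio(\xi, \xi', \eta, \eta'')$. For (5), expanding the three cross-ratios into twelve $\beta_p$-terms and applying symmetry, each of the six distinct values $\beta_p(\xi,\eta)$, $\beta_p(\xi',\eta')$, $\beta_p(\xi, \eta')$, $\beta_p(\xi', \eta)$, $\beta_p(\xi, \xi')$, $\beta_p(\eta, \eta')$ occurs once with each sign and cancels, giving $0$.

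For invariance under $\Isom X$: since isometries carry rank one geodesics to rank one geodesics, each $\gamma$ maps $\QE$ into itself. Using the equivariance $b_{\gamma\zeta}(\gamma x, \gamma y) = b_\zeta(x,y)$ together with the substitution $x \mapsto \gamma x$ in the infimum yields $\beta_{\gamma p}(\gamma\zeta, \gamma\zeta') = \beta_p(\zeta, \zeta')$; evaluating $\cratio(\gamma\xi, \gamma\xi', \gamma\eta, \gamma\eta')$ with basepoint $\gamma p$ then reproduces $\cratio(\xi, \xi', \eta, \eta')$ termwise, and $p$-independence removes any dependence on the chosen basepoint.

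The one genuinely delicate point is continuity, and it is where the rank-one hypothesis built into $\QE$ does real work. On all of $\dbX$ the function $\beta_p$ is only upper semicontinuous (\lemref{beta is continuous}), so the raw formula would not obviously define a continuous function. However, a point of $\QE$ has all four of its endpoint-pairs $(\xi, \eta)$, $(\xi', \eta')$, $(\xi, \eta')$, $(\xi', \eta)$ lying in $\RE$, on which $\beta_p$ \emph{is} continuous. Since the four coordinate projections $(\bd X)^4 \to \dbX$ picking out these pairs are continuous and carry $\QE$ into $\RE$, the cross-ratio is a finite linear combination of continuous functions, hence continuous on $\QE$. I expect the only place requiring care to be the verification that these projections really land in $\RE$ --- that is, that the defining geodesics of a quadrilateral are rank one --- which is immediate from \defref{quadrilaterals}; everything else is routine.
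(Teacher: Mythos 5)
Your proof is correct, and it is essentially the verification the paper intends: the paper gives no argument for this proposition at all, remarking only that ``the proofs are straightforward,'' and your computations (expansion into $\beta_p$-terms, symmetry of $\beta_p$, equivariance of Busemann functions plus basepoint-independence for isometry invariance) are exactly that routine verification. You also correctly isolated the two points that genuinely need care --- that continuity holds because every coordinate pair of a quadrilateral lies in $\RE$, where $\beta_p$ is continuous by \lemref{beta is continuous} rather than merely upper semicontinuous, and that identities \itemrefstar{cratio cocycle} and \itemrefstar{triple identity for cratio} should be read as holding when all tuples appearing are quadrilaterals, so that every $\beta_p$-term is finite by \lemref{beta}.
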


We now relate cross-ratios to hyperbolic translation lengths (see \propref{cross-ratios <=> lengths}).
Write $\ell(\gamma)$ for the \defn{translation length} $\ell(\gamma) = \inf_{x \in X} d(x, \gamma x)$ of any $\gamma \in \Isom X$.  If there is some $x \in X$ such that $d(x, \gamma x) = \ell(\gamma) > 0$, we say $\gamma$ is \defn{hyperbolic}.  In this case, $x = v(0)$ for some geodesic $v \in SX$ with $\gamma v = g^{\ell(\gamma)} v$ (an \defn{axis} of $\gamma$).  For any hyperbolic isometry $\gamma \in \Isom X$, write $\gamma^+ = v^+$ and $\gamma^- = v^-$ for some (any) axis $v$ of $\gamma$.

\lemref{lengths from cross-ratios} shows how the translation length of any hyperbolic isometry of $X$ is given by some appropriately chosen cross-ratio, up to a factor of $2$.  For negatively curved manifolds, the result is due to Otal (\cite{otal92}).

\begin{lemma}\label{lengths from cross-ratios}
Let $\gamma$ be a hyperbolic isometry of $X$.  Then
\[\cratio (\gamma^-, \gamma^+, \gamma \xi, \xi) = 2 \ell(\gamma)\]
for all $\xi \in \bd X$ that are Tits distance $> \pi$ from both $\gamma^-$ and $\gamma^+$.
\end{lemma}
\begin{proof}
The proof outlined by Dal'bo (\cite{dalbo99}) for Fuchsian groups extends to CAT($0$) spaces.
\end{proof}

By \lemref{lengths from cross-ratios}, we can calculate the translation length of any hyperbolic isometry of $X$ in terms of cross-ratios.  The next lemma shows that we can calculate any cross-ratio in $\QE$ in terms of translation lengths of hyperbolic isometries of $X$.  The result is due to Kim (\cite{kim01}) and Otal (\cite{otal92}) for negatively curved manifolds.

\begin{lemma}\label{cross-ratios from lengths}
Let $\gamma_1, \gamma_2 \in \Gamma$ be rank one hyperbolic isometries with $\gamma_1^-$, $\gamma_1^+$, $\gamma_2^-$, and $\gamma_2^+$ all distinct.  Then
\[\cratio (\gamma_1^-, \gamma_2^-, \gamma_1^+, \gamma_2^+)
= \lim_{n \to \infty} \left[ \ell(\gamma_1^n) + \ell(\gamma_2^n) - \ell(\gamma_1^n \gamma_2^n) \right].\]
\end{lemma}
\begin{proof}
The proof given by Dal'bo (\cite{dalbo99}) for Fuchsian groups extends.
\end{proof}

Call $\setp{\ell(\gamma)}{\gamma \in \Gamma \text{ is hyperbolic}}$ the \defn{length spectrum}, and call $\cratio (\QEL)$ the \defn{cross-ratio spectrum}.  Say that either one is \defn{arithmetic} if lies in a discrete subgroup of $\R$.

\begin{prop}\label{cross-ratios <=> lengths}
Let $X$ be a proper \textnormal{CAT($0$)} space under a proper, non-elementary, isometric action by a group $\Gamma$ with a rank one element.  The cross-ratio spectrum is arithmetic if and only if the length spectrum is arithmetic.
\end{prop}
\begin{proof}
Let $L$ be the length spectrum.  Since the rank one axes are weakly dense in $\SXL$, Lemmas \ref{lengths from cross-ratios} and \ref{cross-ratios from lengths} give us
\[2L \subseteq \cratio (\QEL) \subseteq \cl{\langle L \rangle},\]
where $\cl{\langle L \rangle}$ is the closed subgroup in $\R$ generated by $L$.  This proves the proposition.
\end{proof}

We conclude this section with a variation on \lemref{lengths from cross-ratios} which is particularly useful for trees:  \lemref{lengths from cross-ratios} implies that $\cratio (\QE)/2$ contains all the translation lengths of hyperbolic elements of $\Isom X$.  If $X$ is a geodesically complete tree (with no vertices of valence $2$), the following lemma implies the slightly stronger statement that $\cratio (\QE)/2$ contains all the edge lengths of $X$.  We will use this fact in the proof of \lemref{discrete cross-ratios give trees}.

\begin{lemma}\label{tree lengths}
Suppose $X$ is geodesically complete and the link of $p, q \in X$ each has $\ge 3$ components.  Then there is some $(\xi, \xi', \eta, \eta') \in \QE$ such that $\cratio (\xi, \xi', \eta, \eta') = 2 d(p, q)$.
\end{lemma}
\begin{proof}
Let $r = d(p, q)$, and let $\rho_p \colon \bd X \to \link{p}{X}$ and $\rho_q \colon \bd X \to \link{q}{X}$ be radial projection onto the links of $p$ and $q$.  Find geodesics $v, w \in SX$ such that
\begin{enumerate}
\item $v(0) = w(r) = p$ and $v(r) = w(0) = q$,
\item $\rho_p (v^-), \rho_p (w^+), \rho_p (v^+)$ lie in distinct components of $\link{p}{X}$, and
\item $\rho_q (v^+), \rho_q (w^-), \rho_q (w^+)$ lie in distinct components of $\link{q}{X}$.
\end{enumerate}
One easily verifies that $(v^-, w^-, v^+, w^+) \in \QE$ and $\cratio (v^-, w^-, v^+, w^+) = 2r$.
\end{proof}

\section{Ergodicity and Mixing}

We now prove ergodicity and characterize mixing.

\begin{standing hypothesis}
In this section, let $X$ be a proper CAT($0$) space under a proper, non-elementary, isometric action by a group $\Gamma$ with a rank one element.  Assume that $m_{\Gamma}$ is finite.
\end{standing hypothesis}

For a group $G$ acting measurably on a space $Z$, a $G$-invariant measure $\nu$ on $Z$ is \defn{ergodic} under the action of $G$ if every $G$-invariant measurable set $A \subseteq Z$ has either $\nu(A) = 0$ or $\nu(Z \setminus A) = 0$.  If $G$ preserves only the measure class of $\nu$, and every $G$-invariant set has either zero or full $\nu$-measure, $\nu$ is called \defn{quasi-ergodic}.

\begin{theorem}[\thmref{main ergodicity}]
\label{ergodicity}
\stateergodicity{true}
\end{theorem}
\begin{proof}
The proof is standard.  The classical argument by Hopf (\cite{hopf71}) gives \itemrefstar{erg: ergodicity} from \corref{strong stable sets}.  Thus every $\Gamma$- and $g^t$-invariant measurable function $f \colon SX \to \R$  is constant $m$-almost everywhere, giving us \itemrefstar{erg: diagonal}.  Since $\mu$ and $\double{\mu_x}$ are in the same measure class (see \corref{RE has full measure}), the diagonal action of $\Gamma$ on $(\dbX, \double{\mu_x})$ is quasi-ergodic.  It follows that the $\Gamma$-action on $(\bd X, \mu_x)$ is also quasi-ergodic.  
Because convex combinations of conformal densities (of dimension $\delta_{\Gamma}$) are conformal densities (of dimension $\delta_{\Gamma}$), we can derive uniqueness of the Patterson-Sullivan measure from ergodicity of $m_{\Gamma}$.  Thus we have \itemrefstar{erg: p-s}.
And \itemrefstar{erg: div. type} follows from \lemref{shadow lemma} because $\mu_x$ gives positive measure to the set of radial limit points of $\Gamma$.
\end{proof}

For a locally compact group $G$ acting measurably on a space $Z$, a finite $G$-invariant measure $\nu$ on $Z$ is \defn{mixing} under the action of $G$ if, for every pair of measurable sets $A, B \subseteq Z$, and every sequence $g_n \to \infty$ in $G$, we have $\nu(A \cap g_n B) \to \frac{\nu(A) \, \nu(B)}{\nu(Z)}$.

\begin{lemma}\label{mixing}
Suppose $\ZL$ is nonempty.  Then either $m_\Gamma$ is mixing under the geodesic flow $g^t_{\Gamma}$ on $\modG{SX}$, or the cross-ratio spectrum is arithmetic.
\end{lemma}

\begin{proof}
The proof of \cite[Theorem 1]{babillot} extends to our situation.
\end{proof}

Finally, we characterize (see \lemref{discrete cross-ratios give trees}) the spaces with arithmetic cross-ratio spectrum.

\begin{lemma}\label{detecting geodesics}
Suppose $p \in X$ and $\xi, \eta \in \bd X$.  Then $\beta_p (\xi, \eta) = 0$ if and only if $\angle_p (\xi, \eta) = \pi$.
\end{lemma}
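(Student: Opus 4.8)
The plan is to reduce the analytic identity to the purely geometric statement that $p$ lies on a bi-infinite geodesic with endpoints $\xi$ and $\eta$ exactly when the two rays issuing from $p$ toward $\xi$ and $\eta$ make angle $\pi$.

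First I would record that $\beta_p(\xi, \eta) \le 0$ always: taking $x = p$ in the defining infimum $\beta_p(\xi,\eta) = \inf_{x \in X}(b_\xi + b_\eta)(x, p)$ gives the value $(b_\xi + b_\eta)(p, p) = 0$. Hence $\beta_p(\xi, \eta) = 0$ if and only if this infimum is both finite and attained at $x = p$. By \lemref{beta}, $\beta_p(\xi, \eta)$ is finite precisely when $(\xi, \eta) \in \GE$, and the value $(b_\xi + b_\eta)(x, p)$ realizes the infimum exactly at those $x$ lying on the image of some geodesic $v \in \emap^{-1}(\xi, \eta)$. Combining these observations, $\beta_p(\xi, \eta) = 0$ holds if and only if $p$ itself lies on the image of a geodesic joining $\xi$ to $\eta$.

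It then remains to prove that $p$ lies on such a geodesic if and only if $\angle_p(\xi, \eta) = \pi$. Write $v, w$ for the geodesic rays from $p$ to $\xi$ and to $\eta$, so that $\angle_p(\xi, \eta) = \angle_p(v, w)$ by definition. If $p$ lies on a geodesic line $\sigma$ from $\xi$ to $\eta$, I reparametrize so $\sigma(0) = p$; uniqueness of the rays issuing from $p$ identifies the two subrays of $\sigma$ with $v$ and $w$, giving $d(v(s), w(t)) = s + t$ for all $s, t \ge 0$, so every comparison angle $\overline{\angle}_p(v(s), w(t))$ equals $\pi$ and thus $\angle_p(v, w) = \pi$. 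For the converse, I use that in a CAT($0$) space the comparison angle $\overline{\angle}_p(v(s), w(t))$ is non-decreasing in $s$ and $t$, whence the Alexandrov angle equals the infimum $\inf_{s, t > 0}\overline{\angle}_p(v(s), w(t))$; if this infimum is $\pi$, then (as comparison angles never exceed $\pi$) every comparison angle equals $\pi$, so $d(v(s), w(t)) = s + t$ for all $s, t$. Concatenating $v$ and $w$ then yields an isometric embedding $\sigma \colon \R \to X$ with $\sigma^- = \xi$, $\sigma^+ = \eta$, and $\sigma(0) = p$, as required.

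The only subtle ingredient is the direction of monotonicity of comparison angles and the resulting identification of the Alexandrov angle with their infimum; this is standard (see the law of cosines and the angle comparison in Chapter II of \cite{bridson}). Once that is in hand, the concatenation-is-a-geodesic step is immediate from $d(v(s), w(t)) = s + t$, and the remainder is bookkeeping with \lemref{beta} and the cocycle property of Busemann functions. The degenerate case $\xi = \eta$ is automatic, since then neither side can hold: no geodesic line joins $\xi$ to itself, while $\beta_p(\xi, \xi) = -\infty$ and $\angle_p(\xi, \xi) = 0$.
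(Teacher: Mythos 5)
Your proposal is correct and takes essentially the same approach as the paper: the paper's entire proof is the single observation that both conditions are equivalent to the existence of a geodesic in $X$ joining $\xi$ to $\eta$ and passing through $p$, and your argument simply supplies the details of those two equivalences (the $\beta_p$ side via \lemref{beta} and the fact that the infimum is $\le 0$ with value $0$ at $x = p$, and the angle side via monotonicity of comparison angles and concatenation of the two rays).
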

\begin{proof}
Both statements are equivalent to the existence of a geodesic in $X$ that joins $\xi$ and $\eta$ and passes through the point $p$.
\end{proof}

\begin{lemma}\label{quasi-tree}
Suppose $X$ is geodesically complete and all cross-ratios on $\QE$ take values in a fixed discrete subgroup of the reals.  Then
$\RL = \SXL$.
\end{lemma}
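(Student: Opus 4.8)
The plan is to argue by contradiction. Recall that $\Reg = SX$ means no geodesic bounds a flat half-plane, so suppose some $v \in SX$ bounds a flat half-plane $H \cong \R \times [0,\infty)$. The boundary at infinity of $H$ is an arc $A \subset \bd X$ — the image of $[0,\pi]$ under $\theta \mapsto \xi_\theta$, with $\xi_0 = v^+$ and $\xi_\pi = v^-$, realized by the rays in $H$ issuing from a basepoint — which is a Tits geodesic of length $\pi$. In particular $A$ is a nondegenerate connected subset of $\bd X$ in the visual topology whose interior points are not Tits-isolated. The idea is to build from $A$ a continuous cross-ratio function on a connected set and contradict discreteness.

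Fix a rank one axis $\gamma$ (one exists by hypothesis); its endpoints $\gamma^\pm$ are Tits-isolated by \propref{rank one summary}. Hence for all $\eta, \eta' \in \bd X \setminus \{\gamma^-, \gamma^+\}$ each of the pairs $(\gamma^-, \eta)$, $(\gamma^+, \eta')$, $(\gamma^-, \eta')$, $(\gamma^+, \eta)$ has infinite Tits distance, so is joined by a rank one geodesic (\lemref{rank one > pi}), whence $(\gamma^-, \gamma^+, \eta, \eta') \in \QE$. Define $\Phi_\gamma(\eta) = \beta_p(\gamma^-, \eta) - \beta_p(\gamma^+, \eta)$; since these pairs lie in $\RE$, $\Phi_\gamma$ is continuous on $\bd X \setminus \{\gamma^-,\gamma^+\}$ by \lemref{beta is continuous}. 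Unwinding \defref{cross-ratio def} gives $\cratio(\gamma^-, \gamma^+, \eta, \eta') = \Phi_\gamma(\eta) - \Phi_\gamma(\eta')$, and by hypothesis every such value lies in a fixed discrete subgroup $c\Z$ of $\R$.

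Now restrict to $A$. As $A \subset \bd X \setminus \{\gamma^\pm\}$ is connected and $\Phi_\gamma|_A$ is continuous with all of its differences in the discrete group $c\Z$, the function $\Phi_\gamma$ is constant on $A$; that is, $\cratio(\gamma^-, \gamma^+, \xi_\theta, \xi_{\theta'}) = 0$ for all $\theta, \theta'$ and every rank one axis $\gamma$. On the other hand, \lemref{lengths in cross-ratios} yields the translation identity $\Phi_\gamma(\gamma\eta) = \Phi_\gamma(\eta) + 2\ell(\gamma)$ for every $\eta \neq \gamma^\pm$, so $\Phi_\gamma$ is genuinely non-constant (and in particular $c \neq 0$, ruling out the trivial subgroup). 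The contradiction will follow once I exhibit a rank one axis $\gamma$ and an arc point $\eta \in A$ with $\gamma\eta$ in the same connected component of $\bd X \setminus \{\gamma^\pm\}$ as $\eta$ — most cleanly with $\gamma\eta \in A$ — for then $2\ell(\gamma) = \Phi_\gamma(\gamma\eta) - \Phi_\gamma(\eta) = 0$, which is absurd.

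I expect this last step to be the main obstacle. The difficulty is that a rank one $\gamma$ cannot preserve $H$ — its axis would then bound $H$ and fail to be rank one — so $\gamma A \neq A$ and no overlap is automatic. To force one, I would invoke the strong density of rank one axes in $SX$ (the corollary following \propref{full support on SX}) together with the north–south dynamics of \lemref{ping-pong}, choosing $\gamma$ whose axis closely shadows $v$ so that $\gamma^\pm$ sit near $v^\pm$ and some power of $\gamma$ carries a point of $A$ into $A$ (or into a translate of $A$ chained to it). Equivalently, through the twisting interpretation (\lemref{twisting}), the vanishing of all $\cratio(\gamma^-, \gamma^+, \xi_\theta, \xi_{\theta'})$ asserts that the stable and unstable horospherical foliations of $\gamma$ close up across the flat direction of $H$, contradicting the transversality encoded by $\ell(\gamma) > 0$; converting that transversality into one concrete nonzero cross-ratio among points of $A$ is the crux.
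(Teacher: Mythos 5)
Your preparatory steps are correct and well justified: the boundary arc $A$ of the half-plane is visually connected and contains no Tits-isolated points, $\Phi_\gamma$ is continuous off $\set{\gamma^-,\gamma^+}$ by \lemref{beta is continuous}, its differences on $A$ are cross-ratios of genuine quadrilaterals in $\QE$ (by Tits-isolation of $\gamma^\pm$ and \lemref{rank one > pi}), so discreteness forces $\Phi_\gamma$ to be constant on $A$, and the identity $\Phi_\gamma(\gamma\eta)=\Phi_\gamma(\eta)+2\ell(\gamma)$ follows from \lemref{lengths in cross-ratios}. But the proof is not finished, and the missing step is a genuine gap, not a routine verification. Under the discreteness hypothesis all you know is that $\Phi_\gamma$ is \emph{locally} constant on $\bd X \setminus \set{\gamma^-,\gamma^+}$, hence constant on connected components; so you need a rank one axis $\gamma$ and a point $\eta$ with $\eta$ and $\gamma\eta$ in one and the same component. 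None of the tools you invoke produces this. A rank one $\gamma$ cannot preserve $H$, so $\gamma A \cap A = \varnothing$ in general; away from arcs like $A$ the components of $\bd X\setminus\set{\gamma^\pm}$ can perfectly well be single points (the boundary can be as disconnected as that of a tree); and under the north--south dynamics of \lemref{ping-pong} the translates $\gamma^n A$ merely shrink toward the single point $\gamma^+$, and a sequence of pairwise disjoint arcs accumulating at a point, together with that point, need not lie in one connected component. Density of rank one axes in $SX$ gives axes shadowing $v$, but proximity of $\gamma^\pm$ to $v^\mp$ gives no overlap of components either. So the ``chaining'' you hope for has no mechanism behind it, and this is exactly the step that carries all the difficulty.

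It is worth seeing how the paper gets around this, because its mechanism is quite different and never requires any group element to return points of $A$ anywhere. The paper takes rank one geodesics $v_k \to v$ converging \emph{in $SX$}, with both endpoints Tits-isolated (density of $\Specialset$), fixes Tits-isolated $\xi,\eta$, and uses discreteness and continuity to equate the cross-ratios of $(v_k^-,\xi,v_k^+,\eta)$ and $(v_k^-,\xi,v^-,\eta)$, which rearranges to $\beta_p(v_k^-,v_k^+)-\beta_p(\xi,v_k^+)=\beta_p(v_k^-,v^-)-\beta_p(\xi,v^-)$. The right side tends to $-\infty$, since upper semicontinuity of $\beta_p$ (\lemref{beta is continuous}) forces $\beta_p(v_k^-,v^-)\to\beta_p(v^-,v^-)=-\infty$; the left side converges to the finite number $\beta_p(v^-,v^+)-\beta_p(\xi,v^+)$, because $\beta_p\circ\emap$ is continuous on $SX$ (this is precisely where convergence $v_k\to v$ in $SX$, rather than mere convergence of endpoint pairs, is used) and $\beta_p$ is continuous on $\RE$. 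The contradiction is thus an analytic degeneration of the Busemann cocycle at the diagonal, with translation lengths playing no role at all. Note the structural difference from your scheme: your identity ``$\Phi_\gamma$ constant on $A$'' only ever involves mixed pairs such as $(\gamma^-,v^+)$, whose $\beta_p$-values can all drift to $-\infty$ together without contradiction; the paper's equation pits a \emph{same-index} pair $(v_k^-,v_k^+)$, controlled by convergence in $SX$, against a degenerating mixed pair $(v_k^-,v^-)$, and that asymmetry is what yields the contradiction. To complete your argument you would have to either prove the component statement (which looks as hard as the lemma itself) or switch to this degeneration along $v_k \to v$, at which point the axis $\gamma$ and the translation identity become unnecessary.
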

\begin{proof}
Suppose, by way of contradiction, that some $v \in \SXL$ has $d_T (v^-, v^+) = \pi$.  Find $\xi, \eta \in \Lam$ isolated in the Tits metric such that $\xi, \eta, v^-, v^+$ are distinct.  Since the rank one axes are strongly dense in $\SXL$, there is a sequence $v_k \to v$ such that $v_k^-$ and $v_k^+$ both are isolated in the Tits metric for every $k$.  We may assume $v_k^-, v_k^+ \in \Lam \setminus \set{\xi, \eta, v^-, v^+}$, hence $(v_k^-, \xi, v_k^+, \eta), (v_k^-, \xi, v^{\pm}, \eta) \in \QEL$.  Then
\[\cratio (v_k^-, \xi, v_k^+, \eta) = \cratio (v_k^-, \xi, v^+, \eta) = \cratio (v_k^-, \xi, v^-, \eta)\]
for all sufficiently large $k$ by discreteness and continuity of cross-ratios on $\QE$.  Thus
\begin{equation} \label{quasi-tree eqn}
\beta_p (v_k^-, v_k^+) - \beta_p (\xi, v_k^+) = \beta_p (v_k^-, v^-) - \beta_p (\xi, v^-),
\end{equation}
where $p \in X$ is arbitrary.

Recall from \lemref{beta is continuous} that $\beta_p \colon \dbX \to [-\infty, \infty)$ is upper semicontinuous on $\dbX$.  By \lemref{beta}, $\beta_p (v^-, v^-) = -\infty$, hence $\beta_p (v_k^-, v^-) \to -\infty$, but $\beta_p (\xi, v^-)$ is finite, so we have
\[
\lim_{k \to \infty} \left( \beta_p (v_k^-, v_k^+) - \beta_p (\xi, v_k^+) \right)
= \lim_{k \to \infty} \beta_p (v_k^-, v^-) - \beta_p (\xi, v^-)
= -\infty.
\]
But $\set{\beta_p (\xi, v_k^+)}$ is bounded because $\beta_p (\xi, v_k^+) \to \beta_p (\xi, v^+)$ by continuity of $\beta_p$ on $\RE$.  Therefore, $\beta_p (v_k^-, v_k^+) \to -\infty$ by \eqref{quasi-tree eqn} and upper semicontinuity.

On the other hand, $\beta_p \circ \emap$ is continuous on $SX$.  For if $v_k \to v$ in $SX$, then $(v_k^-, v_k^+) \to (v^-, v^+)$ in $\dbX$, so $(b_{v_k^-} + b_{v_k^+}) \to (b_{v^-} + b_{v^+})$ uniformly on compact subsets.  Also, $v_k(0) \to v(0)$ in $X$, so $(b_{v_k^-} + b_{v_k^+}) (v_k(0), p) \to (b_{v^-} + b_{v^+}) (v(0), p)$.  Thus $\beta_p (v_k^-, v_k^+)$ converges to $\beta_p (v^-, v^+)$.

Hence $\beta_p (v_k^-, v_k^+)$ must converge to $\beta_p (v^-, v^+)$, which is finite by \lemref{beta}; but this contradicts $\beta_p (v_k^-, v_k^+) \to -\infty$.  Therefore, $\RL = \SXL$.
\end{proof}

\begin{rem}
Note the requirement that the cross-ratios be discrete on all of $\QE$, not just $\QEL$, in \lemref{quasi-tree}.  It is not clear that $\cratio (v_k^-, \xi, v^+, \eta) = \cratio (v_k^-, \xi, v^-, \eta)$ holds without this assumption.
\end{rem}

\begin{lemma}\label{discrete cross-ratios give trees}
Suppose $X$ is geodesically complete, $\Lam = \bd X$, and $B(\QE) \subseteq c\Z$ for some $c > 0$.  Then
$X$ is isometric to a tree with all edge lengths in $2c\Z$.
\end{lemma}
\begin{proof}
Suppose all cross-ratios on $\QE$ lie in $c\Z \subset \R$, for some $a > 0$.  We will prove that the link $\link{p}{X}$ of $p$ is discrete at every point $p \in X$.  So fix $p \in X$, and let $\rho \colon \bd X \to \link{p}{X}$ be radial projection.

For $\eta \in \bd X$, let $A_p (\eta) = \setp{\xi \in \bd X}{\angle_p (\xi, \eta) = \pi}$.  Clearly every $\rho(A_p (\eta))$ is closed in $\link{p}{X}$.  We claim every $\rho(A_p (\eta))$ is also open.  For if $\rho(A_p (\eta_0))$ is not open for some $\eta_0 \in \bd X$, there is a point $\xi_0 \in A_p (\eta_0)$ and a sequence $(\xi_k)$ in $\bd X$ such that $\angle_p (\xi_0, \xi_k) \to 0$ but each $\angle_p (\xi_k, \eta_0) < \pi$.  For each $\xi_k$, choose $\eta_k \in A_p (\xi_k)$.  Passing to a subsequence, $(\xi_k, \eta_k) \to (\xi'_0, \eta'_0) \in \dbX$.  By continuity of $\angle_p$, we have $\angle_p (\xi'_0, \eta'_0) = \lim_{k \to \infty} \angle_p (\xi_k, \eta_k) = \pi$ and $\angle_p (\xi_0, \xi'_0) = 0$.  Hence
\[\angle_p (\xi_0, \eta'_0) = \angle_p (\xi'_0, \eta'_0) = \pi
= \angle_p (\xi_0, \eta_0) = \angle_p (\xi'_0, \eta_0),\]
with the left- and right-most equalities coming from the triangle inequality.  Thus
\[\cratio (\xi_0, \xi'_0, \eta_0, \eta'_0) = \beta_p (\xi_0, \eta_0) + \beta_p (\xi'_0, \eta'_0) -\beta_p (\xi_0, \eta'_0) - \beta_p (\xi'_0, \eta_0)\]
equals zero by \lemref{detecting geodesics} (note $(\xi_0, \xi'_0, \eta_0, \eta'_0) \in \QE$ because $\Reg = SX$ by \lemref{quasi-tree}).  By discreteness and continuity of cross-ratios, we have a neighborhood $U \times V$ of $(\xi'_0, \eta'_0)$ in $\dbX$ such that $\cratio (\xi_0, \xi, \eta_0, \eta) = 0$ for all $(\xi, \eta) \in U \times V$ (note $(\xi_0, \xi, \eta_0, \eta) \in \QE$ by openness of $\RE$ in $\dbX$, provided $U \times V$ is chosen to be sufficiently small).  Thus for large $k$, since $(\xi_k, \eta_k) \in U \times V$, we have $\cratio (\xi_0, \xi_k, \eta_0, \eta_k) = 0$.  But we know $0 = \beta_p (\xi_0, \eta_0) = \beta_p (\xi_k, \eta_k)$, hence
\[0 = \cratio (\xi_0, \xi_k, \eta_0, \eta_k) = -\beta_p (\xi_0, \eta_k) - \beta_p (\xi_k, \eta_0).\]
Both terms on the right being nonnegative, they must both equal zero.  Hence we have $\angle_p (\xi_k, \eta_0) = \pi$, contradicting our assumption on $\xi_k$.  Thus every $\rho(A_p (\eta))$ must be open in $\link{p}{X}$.

It follows from the previous paragraph that no component of $\link{p}{X}$ can contain points distance $\ge \pi$ apart.  But $\link{p}{X}$ is geodesically complete by \propref{geodesic completeness}, and no closed geodesic in $\link{p}{X}$ can have diameter less than $\pi$ because $\link{p}{X}$ is CAT($1$).  Thus $\link{p}{X}$ must be discrete for every $p \in X$.  Therefore $X$, being proper and geodesically complete, must be a metric simplicial tree.  So $2c\Z$ includes all edge lengths of $X$ by \lemref{tree lengths}.
\end{proof}

We now complete the proof of \thmref{main trees}.

\begin{theorem}[\thmref{main trees}]
\label{trees}
\statetrees{true}
\end{theorem}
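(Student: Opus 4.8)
The plan is to prove the four conditions equivalent by running the cycle $\itemref{condition: not mixing}\Rightarrow\itemref{condition: cross-ratios arithmetic}\Rightarrow\itemref{condition: special tree}\Rightarrow\itemref{condition: not mixing}$, which already yields $\itemref{condition: not mixing}\Leftrightarrow\itemref{condition: cross-ratios arithmetic}\Leftrightarrow\itemref{condition: special tree}$, and then to fold in $\itemref{condition: length spectrum arithmetic}$ by establishing $\itemref{condition: cross-ratios arithmetic}\Leftrightarrow\itemref{condition: length spectrum arithmetic}$ from the cross-ratio/length dictionary. The two substantial implications $\itemref{condition: not mixing}\Rightarrow\itemref{condition: cross-ratios arithmetic}$ and $\itemref{condition: cross-ratios arithmetic}\Rightarrow\itemref{condition: special tree}$ are precisely \lemref{mixing} and \lemref{discrete cross-ratios give trees}, so I would simply cite them; all the remaining implications are comparatively soft.

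For $\itemref{condition: special tree}\Rightarrow\itemref{condition: not mixing}$ I would build an explicit continuous factor of the flow onto a circle. Fix a branch point $o\in X$ (one exists since $\abs{\bd X}>2$) and define $F\colon SX\to\R/c\Z$ by $F(v)=b_{v^-}(v(0),o)\bmod c$. The cocycle property gives $F(g^t v)=F(v)+t\bmod c$, so the flow acts on the image by the translation flow. To descend $F$ to $\modG{SX}$ I must verify $\Gamma$-invariance modulo $c$, i.e. that $b_{v^-}(o,\gamma^{-1}o)\in c\Z$ for all $\gamma\in\Gamma$ and $v^-\in\bd X$; in a tree this Busemann value equals the signed distance $d(o,z)-d(\gamma^{-1}o,z)$, where $z$ is the median of $o,\gamma^{-1}o,v^-$, and since $o$ and $\gamma^{-1}o$ are branch points and $z$ is either a branch point or one of these two endpoints, both distances lie in $c\Z$. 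Thus $F$ induces a continuous, flow-equivariant map $\bar F\colon\modG{SX}\to\R/c\Z$, and pushing forward the finite invariant measure $m_\Gamma$ produces a rotation-invariant, hence nontrivial Lebesgue, factor. Because the translation flow on a circle is a rotation flow, which is not mixing, and mixing passes to factors, $m_\Gamma$ cannot be mixing.

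To incorporate $\itemref{condition: length spectrum arithmetic}$ I would argue $\itemref{condition: cross-ratios arithmetic}\Leftrightarrow\itemref{condition: length spectrum arithmetic}$ using the machinery already in place. For $\itemref{condition: cross-ratios arithmetic}\Rightarrow\itemref{condition: length spectrum arithmetic}$: granting $\itemref{condition: special tree}$ (which we now know follows from $\itemref{condition: cross-ratios arithmetic}$), $X$ is a tree, so every geodesic is rank one and every hyperbolic $\gamma\in\Gamma$ has a rank one axis; \lemref{lengths in cross-ratios} writes $2\ell(\gamma)=\cratio(\gamma^-,\gamma^+,\gamma\xi,\xi)$ for a Tits-isolated $\xi$, forcing $\ell(\gamma)\in\tfrac{c}{2}\Z$, a discrete subgroup. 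For $\itemref{condition: length spectrum arithmetic}\Rightarrow\itemref{condition: cross-ratios arithmetic}$: for rank one hyperbolic $g_1,g_2$ with four distinct endpoints, \lemref{cross-ratios from lengths} expresses $\cratio(g_1^-,g_2^-,g_1^+,g_2^+)$ as a limit of $\ell(g_1^n)+\ell(g_2^n)-\ell(g_1^n g_2^n)$; each term lies in $c\Z$ by $\itemref{condition: length spectrum arithmetic}$, and a discrete subgroup is closed, so these cross-ratios lie in $c\Z$. Since endpoint pairs of rank one axes are dense in $\RE$ by \propref{rank one summary 2}, such quadrilaterals are dense in $\QE$, and continuity of the cross-ratio together with closedness of $c\Z$ extends the conclusion to all of $\QE$.

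I expect the main obstacle to be the circle-factor step $\itemref{condition: special tree}\Rightarrow\itemref{condition: not mixing}$: the claim $b_{v^-}(o,\gamma^{-1}o)\in c\Z$ for every boundary point, which is what makes $F$ genuinely descend, must be handled carefully through the median/branch-point structure of the tree, and one must also confirm that $\bar F_* m_\Gamma$ is a nonzero multiple of Lebesgue (not a point mass) so that the resulting rotation factor really is nonmixing. The density-and-closure step in $\itemref{condition: length spectrum arithmetic}\Rightarrow\itemref{condition: cross-ratios arithmetic}$ is the other point requiring attention, but it is routine once density of rank one endpoint pairs and continuity of $\cratio$ are invoked.
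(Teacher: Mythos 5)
Your proposal is correct and takes essentially the same route as the paper: the same cycle \itemref{condition: not mixing} $\Rightarrow$ \itemref{condition: cross-ratios arithmetic} $\Rightarrow$ \itemref{condition: special tree} $\Rightarrow$ \itemref{condition: not mixing} via \lemref{mixing} and \lemref{discrete cross-ratios give trees}, and the same cross-ratio/length dictionary (\lemref{lengths in cross-ratios}, \lemref{cross-ratios from lengths}, plus density of rank one axes from \propref{rank one summary 2}) for \itemref{condition: length spectrum arithmetic} $\Leftrightarrow$ \itemref{condition: cross-ratios arithmetic}. The only differences are cosmetic: you spell out the circle-factor argument for \itemref{condition: special tree} $\Rightarrow$ \itemref{condition: not mixing} (your median/branch-point verification that $b_{v^-}(o,\gamma^{-1}o) \in c\Z$ is sound, and the paper asserts this factor in a single sentence), and you route \itemref{condition: cross-ratios arithmetic} $\Rightarrow$ \itemref{condition: length spectrum arithmetic} through the tree structure rather than applying \lemref{lengths in cross-ratios} directly.
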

\begin{proof}
\propref{cross-ratios <=> lengths} shows
\itemrefstar{condition: cross-ratios arithmetic}%
$\iff$%
\itemrefstar{condition: length spectrum arithmetic},
\lemref{mixing} shows
\itemrefstar{condition: not mixing}%
$\implies$%
\itemrefstar{condition: cross-ratios arithmetic}, and
\lemref{discrete cross-ratios give trees} shows
\itemrefstar{condition: cross-ratios arithmetic}%
$\implies$%
\itemrefstar{condition: special tree}.  If $X$ is a tree with all edge lengths in $c \Z$, then the geodesic flow factors continuously over the circle, so $m_\Gamma$ is not even weak mixing; this proves
\itemrefstar{condition: special tree}%
$\implies$%
\itemrefstar{condition: not mixing}.
\end{proof}

\begin{rems}
As observed in the introduction:
(a)  Mixing is also equivalent to weak mixing and to topological mixing under the hypotheses of \thmref{trees}.
(b)  The theorem does not hold if the geodesic completeness hypothesis is removed.
\end{rems}

\section{Quasi-Product Measures}
\label{gibbs measures}

Many of the results in previous sections hold not just for the Bowen-Margulis measure, but for a number of measures on $\modG{SX}$ called quasi-product measures.  For negatively curved manifolds, there are many examples of quasi-product measures, e.g.~ Gibbs measures, Liouville measures, harmonic measures, and Bowen-Margulis measures.  We state the more general results in this section.

\begin{standing hypothesis}
In this section, let $X$ be a proper CAT($0$) space under a proper, non-elementary, isometric action by a group $\Gamma$ with a rank one element.  In contrast to previous sections, however, the measures $m$, $m_{\Gamma}$, and $\mu$ are not necessarily associated to Bowen-Margulis or Patterson-Sullivan measures.
\end{standing hypothesis}

\begin{definition}\label{qp measure}
Call a $g^t$-invariant Borel measure $m_{\Gamma}$ on $\modG{\SXL}$ a \defn{quasi-product measure} if $m_{\Gamma}$ is supported on $\modG{\ZL}$ and the associated geodesic current $\mu = \emap_* (m)$---where $m$ is the Borel measure on $SX$ defined by \eqref{qm cond for B-M measure}---is equivalent to a finite product measure $\nu_1 \times \nu_2$ on $\dbX$.
\end{definition}

Since $\Gamma$ acts minimally on $\Lam$, we have $\supp(\nu_1) = \supp(\nu_2) = \Lam$ and therefore $\supp (\mu) = \dbL$.  Thus we obtain (cf. \propref{full support on SX}):

\begin{prop}
Suppose $\ZL$ is strongly dense in $\SXL$.  Then $\supp(m) = \SXL$ for any quasi-product measure $m_{\Gamma}$.
\end{prop}

\propref{strong density} holds, as stated, for any quasi-product measure $m_{\Gamma}$.  And much of \thmref{ergodicity}:

\begin{theorem}
Let $X$ be a proper \textnormal{CAT($0$)} space under a proper, non-elementary, isometric action by a group $\Gamma$ with a rank one element.  Suppose $m_{\Gamma}$ is a finite quasi-ergodic measure.
Then both the following hold:
\begin{enumerate}
\item
The Bowen-Margulis measure $m_\Gamma$ is ergodic under the geodesic flow on $\modG{SX}$.
\item
The diagonal action of $\Gamma$ on $(\GE, \mu)$ is ergodic.
\end{enumerate}
\end{theorem}

\lemref{mixing} holds for a finite quasi-product measure $m_{\Gamma}$, and thus we obtain (cf. \thmref{trees}):

\begin{theorem}
Let $X$ be a proper, geodesically complete \textnormal{CAT($0$)} space under a proper, non-elementary, isometric action by a group $\Gamma$ with a rank one element.  Suppose $m_{\Gamma}$ is a finite quasi-product measure and $\Lam = \bd X$.  The following are equivalent:
\begin{enumerate}
\item
The measure $m_\Gamma$ is not mixing under the geodesic flow on $\modG{SX}$.
\item
The length spectrum is arithmetic---that is, the set of all translation lengths of hyperbolic isometries in $\Gamma$ must lie in some discrete subgroup $c\Z$ of $\R$.
\item
There is some $c \in \R$ such that every cross-ratio of $\QE$ lies in $c\Z$.
\item
There is some $c > 0$ such that $X$ is isometric to a tree with all edge lengths in $c \Z$.
\item
The measure $m_\Gamma$ is not weak mixing under the geodesic flow on $\modG{SX}$.
\item
The geodesic flow on $\modG{SX}$ is not topologically mixing.
\end{enumerate}
\end{theorem}

Now, the condition in \defref{qp measure} that $m_{\Gamma}$ be supported on $\modG{\ZL}$ is important to be able to check.  Thus we consider a slightly weaker setting:

\begin{definition}\label{weak qp measure}
Call a $g^t$-invariant Borel measure $m_{\Gamma}$ on $\modG{(\GELR)}$ a \defn{weak quasi-product measure} if the associated geodesic current $\mu = \emap_* (m)$---where $m$ is the Borel measure on $\GER$ defined by \eqref{qm cond for B-M measure}---is equivalent to a finite product measure $\nu_1 \times \nu_2$ on $\dbX$.
\end{definition}

\lemref{recurrence} extends to this generality, giving us the following theorem (cf.~ \thmref{isolated almost everywhere}).

\begin{theorem}
Let $X$ be a proper \textnormal{CAT($0$)} space under a proper, non-elementary, isometric action by a group $\Gamma$ with a rank one element.  If $m_{\Gamma}$ is a finite weak quasi-product measure, then \mae{$\nu_1$} and \mae{$\nu_2$} $\xi \in \bd X$ is isolated in the Tits metric.
\end{theorem}

Lemmas \ref{constant a.e. 2} and \ref{unique-isometry-type} extend, and we obtain the statement of \thmref{a.e.-geodesic-is-lonely} verbatim.  Thus the distinction between finite quasi-product measures and finite weak quasi-product measures essentially vanishes when $\ZL$ is nonempty, as every finite weak quasi-product measure on $\modG{(\GELR)}$ is essentially a quasi-product measure on $\modG{\SXL}$.  More precisely, we have the following theorem:

\begin{theorem}
Let $X$ be a proper \textnormal{CAT($0$)} space under a proper, non-elementary, isometric action by a group $\Gamma$ with a rank one element, and suppose $\ZL$ is nonempty.  Then $(\pi_x)_*$ provides a canonical bijection from the set of finite quasi-product measures on $\modG{\SXL}$ to the set of finite weak quasi-product measures on $\modG{(\GELR)}$.
\end{theorem}

\bibliographystyle{amsplain}
\bibliography{mixing}

\end{document}